\documentclass[11pt, a4paper]{article}
\usepackage[margin=0.75in]{geometry}
\usepackage{caption}

\usepackage{bm}
\usepackage{extarrows}
\usepackage{stmaryrd}
\usepackage{amsmath}
\usepackage{amssymb}
\usepackage{amsthm}
\usepackage{amsfonts}
\usepackage{algorithm}
\usepackage{algpseudocode}
\usepackage{graphicx}
\usepackage{color}
\usepackage{relsize}
\usepackage[toc,page]{appendix}
\usepackage{hyperref}
\usepackage{cleveref}
\usepackage[font=small,labelfont=bf]{caption}

\newcommand{\lbm}{lattice Boltzmann~}
\newcommand{\fd}{Finite Difference~}
\newcommand{\spatialdimensionality}{d}
\newcommand{\spacevariable}{x}
\newcommand{\freqvariable}{\xi}
\newcommand{\timevariable}{n}
\newcommand{\latticevelocity}{\lambda}
\newcommand{\spacestep}{\Delta x}
\newcommand{\timestep}{\Delta t}
\newcommand{\matricial}[1]{\bm{#1}}
\newcommand{\vectorial}[1]{\bm{#1}}
\newcommand{\reals}{\mathbb{R}}
\newcommand{\naturals}{\mathbb{N}}
\newcommand{\relatives}{\mathbb{Z}}
\newcommand{\complex}{\mathbb{C}}

\newcommand{\definitionequality}{:=}
\newcommand{\velocityletter}{e}
\newcommand{\normalizedvelocityletter}{c}
\newcommand{\velocitynumber}{q}
\newcommand{\populationindex}{j}
\newcommand{\shiftoperator}[1]{T_{\spacestep}^{#1}}
\newcommand{\lattice}{\mathcal{L}}
\newcommand{\fourier}[1]{\hat{#1}}

\newcommand{\imag}{\imath}%
\newcommand{\shiftoperatorfourier}[1]{\fourier{T}\vphantom{T}^{#1}_{\spacestep}}
\newcommand{\momentsmatrix}{\matricial{M}}
\newcommand{\lineargroup}{\text{GL}}
\newcommand{\distributionletter}{f}
\newcommand{\momentletter}{m}
\newcommand{\collided}{\star}
\newcommand{\identity}{I}
\newcommand{\relaxationmatrix}{\matricial{S}}
\newcommand{\atequilibrium}{\text{eq}}
\newcommand{\matrixspace}{\mathcal{M}}
\newcommand{\rank}{\text{rank}}
\newcommand{\consmomentsnumber}{N}
\newcommand{\diagmatrix}{\text{diag}}
\newcommand{\relaxparletter}{s}
\newcommand{\nontrivialnumber}{Q}
\newcommand{\streammoments}{\matricial{T}}
\newcommand{\schememoments}{\matricial{A}}
\newcommand{\schemeequil}{\matricial{B}}

\newcommand{\setfinitedifferenceoperators}{\mathcal{D}^{\spatialdimensionality}_{\spacestep}}
\newcommand{\setfinitedifferenceoperatorsfourier}{\fourier{\mathcal{D}}\vphantom{\mathcal{D}}^{\spatialdimensionality}_{\spacestep}}
\newcommand{\sumoperators}{+}
\newcommand{\prodoperators}{\circ}
\newcommand{\basicx}{\mathsf{x}}
\newcommand{\basicy}{\mathsf{y}}
\newcommand{\basicz}{\mathsf{z}}
\newcommand{\determinant}{\text{det}}
\newcommand{\charactpolynomial}{\mathlarger{\chi}}
\newcommand{\minimalpolynomial}{\mu}
\newcommand{\minimalannpolynomial}{\nu}
\newcommand{\minimalannrowpolynomial}{\tilde{\minimalannpolynomial}}
\newcommand{\polynomialunknown}{X}
\newcommand{\degree}{\text{deg}}
\newcommand{\scheme}[2]{$\text{D}_{#1}\text{Q}_{#2}$}
\newcommand{\conj}[1]{\overline{#1}}
\newcommand{\avglink}{\mathsf{A}}
\newcommand{\difflink}{\mathsf{D}}
\newcommand{\coeffminimal}{\omega}
\newcommand{\coeffcharact}{\gamma}
\newcommand{\coeffannminimal}{\psi}
\newcommand{\matrixminannpol}{\matricial{V}}
\newcommand{\kernel}{\text{ker}}

\newcommand{\indicemoments}{i}
\newcommand{\indiceconserved}{\indicemoments}
\newcommand{\schememomentsother}{\matricial{A}^{\diamond}}

\newcommand{\bigO}[1]{\mathcal{O}(#1)}
\newcommand{\integerinterval}[2]{\llbracket #1, #2 \rrbracket}
\newcommand{\averageaxis}{\mathsf{A}_{\text{a}}}
\newcommand{\averagediagonal}{\mathsf{A}_{\text{d}}}
\newcommand{\genericfunction}{f}
\newcommand{\coeffgenericfinitedifference}{\alpha}
\newcommand{\transpose}[1]{#1^{\intercal}}
\newcommand{\odevariable}{y}
\newcommand{\odeinitialdatum}{\hat{\odevariable}}
\newcommand{\numbercoupleslinkscheme}{W}

\newcommand{\linearequilvector}{\vectorial{\epsilon}}
\newcommand{\canonicalbasisvector}{\vectorial{e}}
\newcommand{\adjugate}{\text{adj}}
\newcommand{\genericcommutativering}{\mathcal{R}}
\newcommand{\ellpspace}[1]{L^{#1}}
\newcommand{\elltwospace}{\ellpspace{2}}

\newcommand{\amplificationpolynomial}{\Phi}
\newcommand{\coeffamplificationpoly}{\varphi}
\newcommand{\rootamplificationpoly}{g}
\newcommand{\groupunitsshiftoperators}{\mathcal{T}^{\spatialdimensionality}_{\spacestep}}
\newcommand{\groupunitsshiftoperatorsfourier}{\fourier{\mathcal{T}}\vphantom{\mathcal{T}}^{\spatialdimensionality}_{\spacestep}}
\newcommand{\genericunit}{\mathsf{T}}
\newcommand{\genericfinitedifference}{\mathsf{D}}
\newcommand{\generatorset}[1]{\langle #1 \rangle}

\newcommand{\setoflines}{I}

\newcommand{\indiceslines}{i}
\newcommand{\indicescolumns}{j}
\newcommand{\genericmatrix}{\matricial{C}}
\newcommand{\genericmatrixtwo}{\matricial{D}}
\newcommand{\setcardinality}[1]{|#1|}
\newcommand{\indicetimeshift}{k}
\newcommand{\indicepolynomials}{\indicetimeshift}

\newcommand{\oforder}{\sim}
\newcommand{\thup}[1]{#1$^{\text{th}}$}
\newcommand{\cutmatrixsquare}[2]{#1_{#2}}
\newcommand{\cutmatrixsquaretrimmed}[2]{#1[#2]}
\newcommand{\fouriertransformletter}{\mathcal{F}}
\newcommand{\fouriertransform}[1]{\fouriertransformletter[#1]}
\newcommand{\spectrum}[1]{\sigma(#1)}
\newcommand{\cfl}{\mathtt{C}}
\newcommand{\foueriernumber}{\mathtt{D}}
\newcommand{\evaluationOperator}{\mathcal{E}}
\newcommand{\coscomp}{\gamma}
\newcommand{\fdvariable}{u}

\newcommand{\evaluationEquilibria}[1]{\hspace{-0.1cm} \mid^{#1}}

\providecommand{\keywords}[1]{\textbf{\textit{Keywords ---}} #1}
\providecommand{\pacs}[1]{\textbf{\textit{MSC Classification ---}} #1}


\newtheorem{theorem}{Theorem}
\newtheorem{proposition}[theorem]{Proposition}%
\newtheorem{corollary}[theorem]{Corollary}%
\newtheorem{lemma}[theorem]{Lemma}%

\newtheorem{example}{Example}%
\newtheorem{remark}{Remark}%

\newtheorem{definition}{Definition}%

\begin{document}

\title{Finite Difference formulation of any lattice Boltzmann scheme}
\author{Thomas Bellotti (\href{mailto: thomas.bellotti@polytechnique.edu}{thomas.bellotti@polytechnique.edu}) \\ {\footnotesize CMAP, CNRS, \'Ecole polytechnique, Institut Polytechnique de Paris, 91128 Palaiseau Cedex, France} \\
Benjamin Graille (\href{mailto: benjamin.graille@universite-paris-saclay.fr}{benjamin.graille@universite-paris-saclay.fr})  \\ {\footnotesize Institut de Math\'ematique d'Orsay, Universit\'e Paris-Saclay, 91405 Orsay Cedex, France} \\
Marc Massot (\href{mailto: marc.massot@polytechnique.edu}{marc.massot@polytechnique.edu}) \\ {\footnotesize CMAP, CNRS, \'Ecole polytechnique, Institut Polytechnique de Paris, 91128 Palaiseau Cedex, France} \\}

\date{}




\maketitle
\abstract{Lattice Boltzmann schemes rely on the enlargement of the size of the target problem in order to solve PDEs in a highly parallelizable and efficient kinetic-like fashion, split into a collision and a stream phase.
This structure, despite the well-known advantages from a computational standpoint, is not suitable to construct a rigorous notion of consistency with respect to the target equations and to provide a precise notion of stability.
In order to alleviate these shortages and introduce a rigorous framework, we demonstrate that any \lbm scheme can be rewritten as a corresponding multi-step \fd scheme on the conserved variables. 
This is achieved by devising a suitable formalism based on operators, commutative algebra and polynomials.
Therefore, the notion of consistency of the corresponding \fd scheme allows to invoke the Lax-Richtmyer theorem in the case of linear \lbm schemes.
Moreover, we show that the frequently-used von Neumann-like stability analysis for \lbm schemes entirely corresponds to the von Neumann stability analysis of their \fd counterpart.
More generally, the usual tools for the analysis of \fd schemes are now readily available to study \lbm schemes.
Their relevance is verified by means of numerical illustrations.} \vspace{0.2cm} \\ 
\keywords{Lattice Boltzmann methods, Finite Difference multi-step methods, consistency, von Neumann stability analysis, Cayley-Hamilton theorem on the ring of Finite Difference operators}  \vspace{0.2cm} \\ 
\pacs{76M28, 65M06, 65M12, 15A15}

\section{Introduction}

Lattice Boltzmann schemes are a class of computational methods used to simulate systems of conservation laws under the form of Partial Differential Equations (PDEs). 
Their basic way of working is the following: instead of taking $\consmomentsnumber \in \naturals^{\star}$ PDEs and directly discretize them, a \lbm scheme enlarges the size of the problem from $\consmomentsnumber$ to $\velocitynumber > \consmomentsnumber$ and treats it in a kinetic-like fashion.
This means that the  new $\velocitynumber$ variables undergo, at each time step, a local collision phase where different particle distribution functions interact, followed by a lattice-constrained stream phase where no interaction is possible.
The advantage of such idiosyncratic approach compared to more traditional numerical methods (\emph{e.g.} Finite Difference, Finite Volume, Finite Elements, \emph{etc.}) is that 
the local nature of the collision phase allows for massive parallelization of the method and the lattice-constrained stream can be computationally implemented as a pointer shift.
Although this way of proceeding is highly beneficial from a computational perspective, it yields a deficient structure to construct a clear and rigorous notion of consistency with respect to the $\consmomentsnumber$ target equations, as well as a rigorous theory of stability.
Indeed, only formal procedures, either based on the Chapman-Enskog expansion \cite{chapman1990mathematical} or on the equivalent equations by Dubois \cite{dubois2008equivalent, dubois2019nonlinear} are currently available to study the consistency of \lbm schemes.
As far as stability is concerned, most of the studies rely on the linear stability analysis of the eigenvalues of the system, see \cite{benzi1992lattice, sterling1996stability}.

In order to bridge the gap between the \lbm methods and the traditional approaches known to numerical analysts, the aim of the present contribution is to show that any \lbm scheme can be rewritten as a corresponding multi-step \fd scheme on the conserved variables, regardless of the linearity of the equilibria. 
This is made possible by developing an appropriate formalism based on commutative algebra and therefore yields a proper notion of consistency with respect to the target equations, which is that of \fd schemes (see any standard textbook such as \cite{strikwerda2004finite}).
Furthermore, we confirm that the customary \emph{von Neumann} analysis used for \lbm schemes is equivalent to performing the same analysis on the corresponding \fd scheme and is consequently particularly relevant.
The price to pay for passing from an explicit scheme with $\velocitynumber$ variables and utilizing information only at the previous time-step to a method with $\consmomentsnumber < \velocitynumber$ variables is to increase the number of previous time-steps the new solution depends on, yielding a multi-step \fd scheme. 

In the past, few authors have noticed that for some particular \lbm schemes, one has a corresponding (sometimes called ``equivalent'') \fd formulation on the conserved variables.
Despite this, no general theory has been formulated.
For instance: Suga \cite{suga2010accurate} derives  by direct computations a three-stages \fd scheme from a uni-dimensional three-velocities \scheme{1}{3} scheme,\footnote{It is customary to call \scheme{\spatialdimensionality}{\velocitynumber} a scheme in a $\spatialdimensionality$-dimensional space using $\velocitynumber$ discrete velocities.} limiting the computations to a linear framework with one relaxation parameter (SRT).
Dellacherie \cite{dellacherie2014construction} derives a two-stages \fd scheme for the \scheme{1}{2} \lbm scheme. Again, this is limited to one spatial dimension and to a linear framework.
A higher level of generality has been reached by the works of Ginzburg and collaborators, see \cite{ginzburg2009variation} for a recap. They succeeded, using a link formalism, in writing a class of Lattice Boltzmann schemes as \fd schemes \cite{d2009viscosity}.
With their highly constrained link structure to be enforced, the resulting \fd scheme with three stages is valid regardless of the spatial dimension and the choice of discrete velocities. The limitations are that the choice of moments is heavily constrained and only the case of one conserved moment is handled. Moreover, the evolution equation of the moving particles can depend on the distribution of the still particles only \emph{via} the conserved moment the equilibria depend upon and the schemes must be two-relaxation time (TRT) models with ``magic parameter'' equal to one-fourth for any link.
The difficulty in establishing a general result comes from the coupling between spatial operators and time shifts.
We must mention that during the drafting of the present contribution, an interesting work by Fu\u{c}ik and Straka \cite{fuvcik2021equivalent} has been published covering the very same subject and essentially coming to the same conclusion as our paper.
Their focus is different than ours since they adopt a purely algorithmic approach rather than a precise algebraic characterization of \lbm schemes.
We actually provide more insight into the bound on the number of time steps of the corresponding \fd scheme and our formalism, based on polynomials, aims at providing a direct link with the classical tools for the stability analysis and allows to establish a link with the Taylor expansions from \cite{dubois2019nonlinear}, as introduced in \cite{bellotti2021equivalentequations}.
In \cite{fuvcik2021equivalent}, the authors rely on a decomposition of the scheme using an hollow matrix\footnote{Matrix with zero entries on the diagonal.} yielding an equivalent form of the scheme with the diagonal non-equilibrium part, after a finite number of steps of their algorithm. However, to the best of our understanding, the origin of such algorithm is not fully clear. 
In their work, the spatial shifts of data introduced by the stream phase are taken into account using a rather cumbersome system of indices, whereas we rely on an straightforward algebraic characterization of the stream phase.

Our paper is structured as follows: in \Cref{sec:ExampleODEs}, we introduce -- in guise of friendly introduction -- the link of our problem with Ordinary Differential Equations (ODEs).
The right formalism to make \lbm schemes looking very close to a system of ODEs is provided in \Cref{sec:AlgebraicFormLBM} and allows to prove the main results of the work showcased in \Cref{sec:MainResults}.
We devote \Cref{sec:ExamplesAndSimplifications} to discuss examples, possible simplifications of the problem and particular cases deserving particular attention.
In \Cref{sec:Stability}, we prove the equivalence of the \emph{von Neumann} analysis for \lbm and \fd schemes.
In \Cref{sec:ConvergenceLBM}, we show how the well-known tools for \fd schemes can be used to prove convergence theorems for \lbm schemes. We corroborate our claim \emph{via} numerical simulations.
We eventually conclude in \Cref{sec:Conclusions}.

\section{The example of Ordinary Differential Equations}\label{sec:ExampleODEs}
  Since our way of reducing any \lbm scheme to a multi-step \fd scheme has been originally inspired by an analogy with systems of ODEs, let us introduce this way of reasoning with the following example.
  Consider the system of ODEs of size $\velocitynumber \in \naturals^{\star}$ with matrix $ \schememoments \in \matrixspace_{\velocitynumber}(\reals)$ given by 
  \begin{equation}\label{eq:SystemOfODEs}
    \begin{cases}
      \vectorial{\odevariable}'(t) &= \schememoments \vectorial{\odevariable}(t), \qquad t \geq 0, \\
      \vectorial{\odevariable}(0) &= \vectorial{\odeinitialdatum} \in \reals^{\velocitynumber}.
    \end{cases}
  \end{equation}
  Transforming a single equation of higher order into a system of first order equations like \Cref{eq:SystemOfODEs} by considering the companion matrix is a current practice, which unsurprisingly makes the problem more handy from the computational standpoint.
  Though being the analogous of what we aim at doing of \lbm schemes, the other way around, passing from a system of first order to a single equation of higher order, seems to be seldom considered.
  We proceed like in \cite{cull2005matrixdifference}. By iterating, we have that $\vectorial{\odevariable}^{(\indicetimeshift)} = \schememoments^{\indicetimeshift}\vectorial{\odevariable}$ for $\indicetimeshift \in \integerinterval{0}{ \velocitynumber}$.\footnote{We shall consistently use the notation $\integerinterval{a}{b} \definitionequality \{a, a + 1, \dots, b\}$ for $a, b \in \relatives$ and $a < b$.}
    Let $(\coeffcharact_{\indicepolynomials})_{\indicepolynomials = 0}^{\indicepolynomials = \velocitynumber} \subset \reals$ be $\velocitynumber + 1$ real coefficients, then write $\sum_{\indicepolynomials = 0}^{\indicepolynomials = \velocitynumber} \coeffcharact_k \vectorial{\odevariable}^{(k)} =  ( \sum_{k = 0}^{k = \velocitynumber} \coeffcharact_{\indicepolynomials} \schememoments^{\indicepolynomials} )\vectorial{\odevariable}$.
  Taking $(\coeffcharact_{\indicepolynomials})_{\indicepolynomials = 0}^{\indicepolynomials = \velocitynumber}$ as the coefficients of the characteristic polynomial\footnote{In the whole work, the indeterminate of any polynomial shall be denoted by $\polynomialunknown$.} $\charactpolynomial_{\schememoments} = \sum_{\indicepolynomials = 0}^{\indicepolynomials = \velocitynumber} \coeffcharact_{\indicepolynomials} \polynomialunknown^{\indicepolynomials}$ of $\schememoments$, by virtue of the Cayley-Hamilton theorem, we deduce the corresponding equation on the first variable $\odevariable_1$ (playing the role of the conserved moment), given by
  \begin{equation}
    \begin{cases}
      \sum_{\indicetimeshift = 0}^{\indicetimeshift = \velocitynumber} \coeffcharact_{\indicetimeshift} \odevariable_1^{(\indicetimeshift)}(t) &= 0, \qquad t \geq 0, \\
      \odevariable_1(0) &= (\schememoments \vectorial{\odeinitialdatum})_1, \\
      &\vdots \\
      \odevariable_1^{(\velocitynumber - 1)}(0) &= (\schememoments^{\velocitynumber - 1} \vectorial{\odeinitialdatum})_1. \\
    \end{cases} \label{eq:EquivalentODEHigherOrder}
  \end{equation}
  This provides a systematic way of performing the transformation without having to rely on hand computations and substitutions.
  To give an example, consider 
  \begin{align*}
    \schememoments_{\text{I}} =
        \begin{pmatrix}
            1 & 1 & 1 \\
            1 & 2 & 1 \\
            1 & 2 & 0
        \end{pmatrix}, \qquad \text{with} \qquad \charactpolynomial_{\schememoments_{\text{I}}} = \polynomialunknown^3 - 3\polynomialunknown^2 - 2\polynomialunknown + 1.
  \end{align*}
  Hence, the corresponding ODE on the first variable is given by $\odevariable_1''' - 3\odevariable_1'' - 2\odevariable_1' + \odevariable_1 = 0$.

  \section{Algebraic form of \lbm schemes}\label{sec:AlgebraicFormLBM}

  Now that the reader is familiar -- through a simple example -- with the main idea and the final aim of the present contribution, we introduce the general framework of \lbm schemes and the right formalism to treat them almost as systems of ODEs.

  \subsection{Spatial and temporal discretization}
    We set the problem in any spatial dimension $\spatialdimensionality = 1, 2, 3$ considering the whole space $\reals^{\spatialdimensionality}$, because we are not interested in studying boundary conditions.
    The space is discretized by a $\spatialdimensionality$-dimensional lattice $\lattice \definitionequality \spacestep \relatives^{\spatialdimensionality}$ of constant step $\spacestep > 0$ in all direction.
    The time is uniformly discretized with step $\timestep > 0$. The discrete instants of time shall be indexed by the integer indices $\timevariable \in \naturals$ so that the corresponding time is $t^{\timevariable} = \timevariable \timestep$.
    We finally introduce the so-called lattice velocity $\latticevelocity > 0$ defined by $\latticevelocity \definitionequality \spacestep/\timestep$.
    Observe that the developing theory is totally discrete and thus fully independent from the scaling between $\spacestep$ and $\timestep$.

  \subsection{Discrete velocities and shift operators}

    The first choice to be made when devising a lattice Boltzmann scheme concerns the discrete velocities $(\vectorial{\velocityletter}_{\populationindex})_{\populationindex = 1}^{\populationindex = \velocitynumber} \subset \reals^{\spatialdimensionality}$ with $\velocitynumber \in \naturals^{\star}$, which are multiples of the lattice velocity, namely $\vectorial{\velocityletter}_{\populationindex} = \latticevelocity \vectorial{\normalizedvelocityletter}_{\populationindex}$ for any $\populationindex \in \integerinterval{1}{\velocitynumber}$ with $(\vectorial{\normalizedvelocityletter}_{\populationindex})_{\populationindex = 1}^{\populationindex = \velocitynumber} \subset \relatives^{\spatialdimensionality}$. Therefore, particles are stuck to move -- at each time step -- on the lattice $\lattice$.
    We denote the distribution density of the particles moving with velocity $\vectorial{\velocityletter}_{\populationindex}$ by $\distributionletter_{\populationindex}$ for every $\populationindex \in \integerinterval{1}{\velocitynumber}$.
    The shift operators associated with the discrete velocities are an important element of the following analysis.
    \begin{definition}[Shift operator]\label{def:ShiftOperators}
      Let $\vectorial{z} \in \relatives^{\spatialdimensionality}$, then the associated shift operator on the lattice $\lattice$, denoted $\shiftoperator{\vectorial{z}}$, is defined in the following way.
      Take $\genericfunction: \lattice \to \reals$ be any function defined on the lattice,\footnote{The function could take values in any ring, see \cite{milies2002introduction}.} then the action of $\shiftoperator{\vectorial{z}}$ is

      \begin{equation*}
        (\shiftoperator{\vectorial{z}}\genericfunction)(\vectorial{\spacevariable}) = \genericfunction(\vectorial{\spacevariable} - \vectorial{z} \spacestep), \qquad \forall \vectorial{\spacevariable} \in \lattice.
      \end{equation*}
      We also introduce $\groupunitsshiftoperators \definitionequality \{\shiftoperator{\vectorial{z}} ~ \text {with} ~ \vectorial{z} \in \relatives^{\spatialdimensionality} \} \cong \relatives^{\spatialdimensionality}$.
    \end{definition}
    
    The shift yields information sought in the upwind direction with respect to the considered velocity.
    Let us introduce the natural binary operation between shifts.
    \begin{definition}[Product]\label{def:OperationsShifts}
      Let the ``product'' $\prodoperators : \groupunitsshiftoperators \times \groupunitsshiftoperators \to \groupunitsshiftoperators$ be the binary operation defined as $\shiftoperator{\vectorial{z}} \prodoperators \shiftoperator{\vectorial{w}} = \shiftoperator{\vectorial{z} + \vectorial{w}}$, for any $\vectorial{z}, \vectorial{w} \in \relatives^{\spatialdimensionality}$.
    \end{definition}
    Henceforth, the product $\prodoperators$ is understood whenever no ambiguity is possible.
    This operation provides an algebraic structure to the shifts, directly inherited from that of $\relatives^{\spatialdimensionality}$.
    \begin{proposition}
      $(\groupunitsshiftoperators, \prodoperators)$ forms an Abelian group.
    \end{proposition}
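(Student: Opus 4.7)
The plan is to transport the Abelian group structure of $(\relatives^{\spatialdimensionality}, +)$ to $(\groupunitsshiftoperators, \prodoperators)$ through the natural map $\Phi : \relatives^{\spatialdimensionality} \to \groupunitsshiftoperators$ defined by $\Phi(\vectorial{z}) \definitionequality \shiftoperator{\vectorial{z}}$. The isomorphism $\groupunitsshiftoperators \cong \relatives^{\spatialdimensionality}$ already announced in \Cref{def:ShiftOperators} suggests exactly this route, and reduces the proof to checking that $\Phi$ is a bijective homomorphism.

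First, I would verify that $\Phi$ is a bijection. Surjectivity is immediate by the very definition of $\groupunitsshiftoperators$. For injectivity, suppose $\shiftoperator{\vectorial{z}} = \shiftoperator{\vectorial{w}}$ as operators on functions $\genericfunction : \lattice \to \reals$. Testing the equality against the indicator function of the origin, \emph{i.e.} $\genericfunction(\vectorial{\spacevariable}) = 1$ if $\vectorial{\spacevariable} = \vectorial{0}$ and $0$ otherwise, the support of $\shiftoperator{\vectorial{z}}\genericfunction$ reduces to the single point $\vectorial{z}\spacestep$; equating it with the support $\vectorial{w}\spacestep$ of $\shiftoperator{\vectorial{w}}\genericfunction$ and using $\spacestep > 0$ yields $\vectorial{z} = \vectorial{w}$.

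Second, \Cref{def:OperationsShifts} translates directly into the homomorphism property: for all $\vectorial{z}, \vectorial{w} \in \relatives^{\spatialdimensionality}$,
\begin{equation*}
\Phi(\vectorial{z} + \vectorial{w}) = \shiftoperator{\vectorial{z} + \vectorial{w}} = \shiftoperator{\vectorial{z}} \prodoperators \shiftoperator{\vectorial{w}} = \Phi(\vectorial{z}) \prodoperators \Phi(\vectorial{w}).
\end{equation*}
Since $(\relatives^{\spatialdimensionality}, +)$ is an Abelian group and $\Phi$ is a bijective homomorphism, the structure transports verbatim: closure and associativity follow from $\shiftoperator{\vectorial{z}} \prodoperators (\shiftoperator{\vectorial{w}} \prodoperators \shiftoperator{\vectorial{v}}) = \shiftoperator{\vectorial{z} + \vectorial{w} + \vectorial{v}} = (\shiftoperator{\vectorial{z}} \prodoperators \shiftoperator{\vectorial{w}}) \prodoperators \shiftoperator{\vectorial{v}}$; the neutral element is $\shiftoperator{\vectorial{0}}$ (which acts as the identity on any $\genericfunction$); the inverse of $\shiftoperator{\vectorial{z}}$ is $\shiftoperator{-\vectorial{z}}$; and commutativity $\shiftoperator{\vectorial{z}} \prodoperators \shiftoperator{\vectorial{w}} = \shiftoperator{\vectorial{w}} \prodoperators \shiftoperator{\vectorial{z}}$ follows from that of $+$ in $\relatives^{\spatialdimensionality}$.

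The only step with any subtlety is the injectivity of $\Phi$, which must be argued because shifts are defined as operators on functions rather than as the index vectors themselves; everything else is a mechanical unwinding of the definition. No further obstacle is expected.
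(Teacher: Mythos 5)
Your proof is correct and follows the same route the paper takes: the paper simply asserts that the group structure is ``directly inherited from that of $\relatives^{\spatialdimensionality}$'' via the identification $\groupunitsshiftoperators \cong \relatives^{\spatialdimensionality}$, which is exactly the transport-of-structure argument you carry out. You merely make explicit the one point the paper leaves implicit, namely the injectivity of $\vectorial{z} \mapsto \shiftoperator{\vectorial{z}}$, and your indicator-function argument for it is sound.
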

    Moreover, there is only ``one movement'' for each Cartesian direction which ``generates'' the shifts. More precisely
    \begin{align}
      \text{for} ~ \spatialdimensionality &= 1, ~ \text{let} ~ \basicx \definitionequality \shiftoperator{1}, \quad \text{then} \quad \groupunitsshiftoperators = \generatorset{\{ \basicx \}}, \label{eq:DefBasicX}\\
      \text{for} ~ \spatialdimensionality &= 2, ~ \text{let} ~ \basicx \definitionequality \shiftoperator{(1, 0)}, \basicy \definitionequality \shiftoperator{(0, 1)}, \quad \text{then} \quad \groupunitsshiftoperators = \generatorset{\{\basicx, \basicy \}}, \nonumber \\
      \text{for} ~ \spatialdimensionality &= 3, ~ \text{let} ~ \basicx \definitionequality \shiftoperator{(1, 0, 0)}, \basicy \definitionequality \shiftoperator{(0, 1, 0)}, \basicz \definitionequality \shiftoperator{(0, 0, 1)}, ~ \text{then} \quad \groupunitsshiftoperators = \generatorset{ \{\basicx, \basicy, \basicz \} }, \nonumber
    \end{align}
    where $\generatorset{\cdot}$ is the customary notation for the generating set of a group.
    We can add one more binary operation, which is non-internal to $\groupunitsshiftoperators$.
    This yields the cornerstone of this work, namely the set of \fd operators, finite combinations of weighted shifts operators \emph{via} a sum. It is defined as follows, see Chapter 3 of \cite{milies2002introduction}.
    \begin{definition}[\fd operators]\label{def:FiniteDifferenceOperators}
      The set of \fd operators on the lattice $\lattice$ is defined as

      \begin{equation*}
        \setfinitedifferenceoperators \definitionequality \reals \groupunitsshiftoperators =  \left \{ \sum\nolimits_{\genericunit \in \groupunitsshiftoperators} \coeffgenericfinitedifference_{\genericunit} \genericunit, \quad\text{where} \quad \coeffgenericfinitedifference_{\genericunit} \in \reals ~ \text{and} ~ \coeffgenericfinitedifference_{\genericunit} = 0 ~ \text{a.e.} \right  \},
      \end{equation*}
      the group ring (or group algebra) of $\groupunitsshiftoperators$ over $\reals$.
      The sum $\sumoperators : \setfinitedifferenceoperators \times \setfinitedifferenceoperators \to \setfinitedifferenceoperators$  the product\footnote{Which interestingly corresponds to the discrete convolution product.} $\prodoperators : \setfinitedifferenceoperators \times \setfinitedifferenceoperators \to \setfinitedifferenceoperators$ of two elements are defined by
        \begin{alignat*}{2}
          \left (\sum_{\genericunit \in \groupunitsshiftoperators} \alpha_{\genericunit} \genericunit \right ) &\sumoperators \left (\sum_{\genericunit \in \groupunitsshiftoperators} \beta_{\genericunit} \genericunit \right ) &&= \sum_{\genericunit \in \groupunitsshiftoperators} (\alpha_{\genericunit} + \beta_{\genericunit}) \genericunit, \\
          \left (\sum_{\genericunit \in \groupunitsshiftoperators} \alpha_{\genericunit} \genericunit \right ) &\prodoperators \left (\sum_{\mathsf{H} \in \groupunitsshiftoperators} \beta_{\mathsf{H}} \mathsf{H} \right ) &&= \sum_{\genericunit, \mathsf{H} \in \groupunitsshiftoperators} \alpha_{\genericunit}\beta_{\mathsf{H}} \genericunit \prodoperators \mathsf{H}.
        \end{alignat*}
        Furthermore, the product of $\sigma \in \reals$ with elements of $\setfinitedifferenceoperators$ is given by
        \begin{equation*}
          \sigma \left (\sum_{\genericunit \in \groupunitsshiftoperators} \alpha_{\genericunit} \genericunit \right ) = \sum_{\genericunit \in \groupunitsshiftoperators} (\sigma \alpha_{\genericunit})\genericunit .
        \end{equation*}
    \end{definition}
    With the two binary operations, $\setfinitedifferenceoperators$ behaves closely to $\relatives$, $\reals$ or $\complex$ as stated by the following result, see \cite{milies2002introduction}.
    \begin{proposition}[Ring of \fd operators]\label{prop:CommutativeRing}
      $(\setfinitedifferenceoperators, \sumoperators, \prodoperators)$ is a commutative ring.\footnote{It also an (Hopf) algebra over $\reals$ and can also be viewed as a free module where the scalars belong to $\reals$ and the basis are the elements of the group $\groupunitsshiftoperators$.}
    \end{proposition}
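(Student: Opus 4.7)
The plan is to verify the ring axioms directly, exploiting the fact that this is a standard group ring construction (as invoked in \cite{milies2002introduction}) and that all the required structure is inherited from either $\reals$ or the abelian group $(\groupunitsshiftoperators, \prodoperators)$. First I would note that every element of $\setfinitedifferenceoperators$ is, by definition, a finitely supported function $\groupunitsshiftoperators \to \reals$, so all sums appearing in the definitions of $\sumoperators$ and $\prodoperators$ are finite and the operations are well-defined (in particular, the convolution-like product still has finite support, since the supports of $\alpha$ and $\beta$ are finite and the sum of finite sets is finite).

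Next I would check that $(\setfinitedifferenceoperators, \sumoperators)$ is an abelian group. Writing elements coefficient-wise, $\sumoperators$ reduces to coordinatewise addition in $\reals$, so associativity, commutativity, existence of a neutral element (the zero operator, with $\coeffgenericfinitedifference_{\genericunit} = 0$ for every $\genericunit$) and of opposites (flip the signs of every $\coeffgenericfinitedifference_{\genericunit}$) are immediate. For the multiplicative structure, I would verify associativity of $\prodoperators$ by a direct computation: expanding $(A \prodoperators B) \prodoperators C$ and $A \prodoperators (B \prodoperators C)$ yields, in both cases, the same triple sum $\sum_{\genericunit, \mathsf{H}, \mathsf{K}} \alpha_{\genericunit} \beta_{\mathsf{H}} \gamma_{\mathsf{K}} \, \genericunit \prodoperators \mathsf{H} \prodoperators \mathsf{K}$, since $\prodoperators$ on $\groupunitsshiftoperators$ is itself associative. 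The distributive laws reduce similarly to distributivity of multiplication over addition in $\reals$ and the fact that for fixed $\genericunit \in \groupunitsshiftoperators$, left and right multiplication by $\genericunit$ on $\groupunitsshiftoperators$ are bijections.

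For commutativity of $\prodoperators$, I would use the previously established fact that $(\groupunitsshiftoperators, \prodoperators)$ is abelian: in the double sum defining $A \prodoperators B$, each term $\alpha_{\genericunit}\beta_{\mathsf{H}} \, \genericunit \prodoperators \mathsf{H}$ equals $\beta_{\mathsf{H}}\alpha_{\genericunit} \, \mathsf{H} \prodoperators \genericunit$, and relabeling indices gives $B \prodoperators A$. The multiplicative identity is $\shiftoperator{\vectorial{0}}$, seen as the element with $\coeffgenericfinitedifference_{\shiftoperator{\vectorial{0}}} = 1$ and all other coefficients zero; checking $\shiftoperator{\vectorial{0}} \prodoperators A = A$ is a one-line computation using $\shiftoperator{\vectorial{0}} \prodoperators \genericunit = \genericunit$.

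No step is genuinely difficult — the whole proof is routine bookkeeping — but if anything were to cause friction, it would be keeping track of the fact that products of two finitely supported families remain finitely supported, so that all reindexing arguments in the double sums are legitimate. Once this is acknowledged, commutativity of the ring follows cleanly from commutativity of the underlying group, which is precisely why passing from a single direction shift to multi-dimensional shifts does not break any structure.
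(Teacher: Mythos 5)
Your proof is correct and follows exactly the standard group-ring argument: the paper itself gives no proof, delegating the statement to the reference on group rings, and that reference's proof is precisely the routine axiom verification you carry out (with commutativity of $\prodoperators$ inherited from the abelian group $\groupunitsshiftoperators$ and the real coefficients). Nothing further is needed.
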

    Observe that $(\setfinitedifferenceoperators, \sumoperators, \prodoperators)$ is not a field: not every element of $\setfinitedifferenceoperators$ has multiplicative inverse, take for example the centered approximation of the derivative along $x$: $(\shiftoperator{-1} - \shiftoperator{1})/(2\spacestep)$ and see for instance the concept of indefinite sum in the calculus of Finite Differences \cite{milne1933calculus, miller1960introduction}.
    The elements having inverse are called ``units'' and divide all the other elements. 
    It can be easily seen that the units are the product of a non-zero real number and a shift in $\groupunitsshiftoperators$. Indeed $(\alpha \shiftoperator{\vectorial{z}})^{-1} = (1/\alpha)\shiftoperator{-\vectorial{z}}$ for any $\alpha \in \reals \smallsetminus \{ 0 \}$ and $\vectorial{z}\in \relatives^{\spatialdimensionality}$. The inverse of a unit shall also be denoted by a bar.

    \begin{remark}\label{rem:LaurentPolynomials}
      One can see $\setfinitedifferenceoperators$ as the ring of Laurent polynomials of $\spatialdimensionality$ variables over the field $\reals$, where the indeterminates are $\basicx$, $\basicy$ and $\basicz$.
      For example, for $\spatialdimensionality = 1$, the identification $\setfinitedifferenceoperators = \reals[\basicx, \basicx^{-1}] = \reals[\basicx, \conj{\basicx}]$ holds.
      This automatically implies that $\setfinitedifferenceoperators$ is more than a commutative ring, namely a unique factorization domain.
    \end{remark}
    \begin{remark}
      The reals $\reals$ can be identified with the subring $\reals \cong \{ \coeffgenericfinitedifference \shiftoperator{\vectorial{0}} ~ : ~ \coeffgenericfinitedifference \in \reals \}$.
    \end{remark}

  \subsection{Lattice Boltzmann algorithm: collide and stream}
    Any lattice Boltzmann scheme consists in an algorithm made up of two phases: a local collision phase performed on each site of the lattice and a stream phase, where particles are exchanged between different sites of the lattice. 
    Let us introduce each of them.

    \subsubsection{Collision phase}
      We adopt the point of view of the multiple-relaxation-times (MRT) schemes, where it is customary to consider the collision written as a diagonal relaxation in the moments basis, see \cite{dhumieres1992}.
      For this reason, we introduce a change of basis called moment matrix $\momentsmatrix \in \lineargroup_{\velocitynumber}(\reals)$.
      The entries of $\momentsmatrix$ can depend on $\spacestep$ and/or on $\timestep$ but cannot be a function of the space and time variables.
      Gathering the distributions into $\vectorial{\distributionletter} = \transpose{(\distributionletter_{1}, \dots, \distributionletter_{\velocitynumber})}$, the moments are recovered by $\vectorial{\momentletter} = \momentsmatrix \vectorial{\distributionletter}$.
      We also introduce
      \begin{itemize}
        \item the matrix $\matricial{\identity} \in \lineargroup_{\velocitynumber}(\reals)$ which is the identity matrix of size $\velocitynumber$;
        \item the matrix $\relaxationmatrix \in \matrixspace_{\velocitynumber}(\reals)$ is the relaxation matrix which is a singular with $\rank(\relaxationmatrix) = \velocitynumber - \consmomentsnumber$, where $\consmomentsnumber \in \integerinterval{1}{ \velocitynumber - 1}$ is the number of conserved moments:
        \begin{equation*}
          \relaxationmatrix = \diagmatrix(0, \dots, 0, \relaxparletter_{\consmomentsnumber + 1}, \dots, \relaxparletter_{\velocitynumber}),
        \end{equation*}
        where the first $\consmomentsnumber$ entries are zero\footnote{This is not always the case in literature but shall be used consistently in this paper. We put them at the beginning for the sake of presentation.} and correspond to the conserved moments, the following $\velocitynumber - \consmomentsnumber$ are such that $\relaxparletter_{\indicemoments} \in ]0, 2]$ for $\indicemoments \in \integerinterval{\consmomentsnumber + 1}{\velocitynumber}$, see \cite{dubois2008equivalent}.
        \item We employ the notation $\vectorial{\momentletter}^{\atequilibrium} \evaluationEquilibria{\timevariable} (\vectorial{\spacevariable}) = \vectorial{\momentletter}^{\atequilibrium} (\momentletter_{1}^{\timevariable}(\vectorial{\spacevariable}), \dots, \momentletter_{\consmomentsnumber}^{\timevariable}(\vectorial{\spacevariable}))$ for $\vectorial{\spacevariable} \in \lattice$, where $\vectorial{\momentletter}^{\atequilibrium}: \reals^{\consmomentsnumber} \to \reals^{\velocitynumber}$ are possibly non-linear functions of the conserved moments.
        Since these equilibria are then multiplied by $\relaxationmatrix$, the first $\consmomentsnumber$ components do not need to be defined.
      \end{itemize}
      The collision phase reads, denoting by $\collided$ any post-collision state
      \begin{equation}\label{eq:CollisionPhase}
        \vectorial{\momentletter}^{\timevariable, \collided}(\vectorial{\spacevariable}) = (\matricial{I} - \relaxationmatrix) \vectorial{\momentletter}^{\timevariable}(\vectorial{\spacevariable}) + \relaxationmatrix \vectorial{\momentletter}^{\atequilibrium} \evaluationEquilibria{\timevariable}  (\vectorial{\spacevariable}), \qquad \forall \vectorial{\spacevariable} \in \lattice.
      \end{equation}
      In the collision phase \Cref{eq:CollisionPhase}, the entries of $\relaxationmatrix$ can depend on $\spacestep$ or $\timestep$, but not on space and time. The equilibria are allowed to follow the same dependencies plus those on space and time and can also depend on some ``external variable'' like in the case of vectorial schemes \cite{graille2014approximation}.

    \subsubsection{Stream phase}
      The stream phase is diagonal in the space of the distributions. It can be written as
      \begin{equation}\label{eq:StreamPhase}
        \vectorial{\distributionletter}^{\timevariable + 1}(\vectorial{\spacevariable}) = \left ( \diagmatrix(\shiftoperator{\vectorial{\normalizedvelocityletter}_{1}}, \dots, \shiftoperator{\vectorial{\normalizedvelocityletter}_{\velocitynumber}}) \vectorial{\distributionletter}^{\timevariable, \collided}\right )(\vectorial{\spacevariable}), \qquad \forall \vectorial{\spacevariable} \in \lattice, 
      \end{equation}
      where for the first time, the matrices have entries in a commutative ring, see \cite{dummit2004abstract} and \cite{brewer1986linear}, instead than in the field $\reals$. 
      The set $\matrixspace_{\velocitynumber}(\setfinitedifferenceoperators)$ of square matrices of size $\velocitynumber$ with entries belonging to $\setfinitedifferenceoperators$ forms a ring under the usual operations between matrices. Even if $\setfinitedifferenceoperators$ is commutative  from \Cref{prop:CommutativeRing},  $\matrixspace_{\velocitynumber}(\setfinitedifferenceoperators)$ is not commutative for $\velocitynumber \geq 2$, as for real matrices and matrices of first-order differential operators \cite{dubois2019nonlinear}.

    \subsubsection{Monolithic scheme}
      The stream phase \Cref{eq:StreamPhase} can be rewritten in a non-diagonal form in the space of moments as done by \cite{dubois2019nonlinear, farag2021consistency} by introducing the matrix $\streammoments \definitionequality \momentsmatrix \diagmatrix(\shiftoperator{\vectorial{\normalizedvelocityletter}_{1}}, \dots, \shiftoperator{\vectorial{\normalizedvelocityletter}_{\velocitynumber}}) \momentsmatrix^{-1} \in \matrixspace_{\velocitynumber}(\setfinitedifferenceoperators)$ and merged with the collision phase \Cref{eq:CollisionPhase} to obtain the scheme
      \begin{equation}\label{eq:SchemeAB}
        \vectorial{\momentletter}^{\timevariable + 1}(\vectorial{\spacevariable}) = \schememoments \vectorial{\momentletter}^{\timevariable}(\vectorial{\spacevariable}) + \schemeequil \vectorial{\momentletter}^{\atequilibrium} \evaluationEquilibria{\timevariable}  (\vectorial{\spacevariable}), \qquad \forall \vectorial{\spacevariable} \in \lattice,
      \end{equation}
      where $\schememoments \definitionequality \streammoments (\vectorial{\identity} - \relaxationmatrix) \in \matrixspace_{\velocitynumber}(\setfinitedifferenceoperators)$ and $\schemeequil \definitionequality \streammoments \relaxationmatrix \in \matrixspace_{\velocitynumber}(\setfinitedifferenceoperators)$.
      In the sequel, we shall not indicate the spatial variable $\vectorial{\spacevariable} \in \lattice$ for the sake of readability.

      We observe that the operators $(\shiftoperator{\vectorial{\normalizedvelocityletter}_{\populationindex}})_{\populationindex = 1}^{\populationindex = \velocitynumber} \subset \groupunitsshiftoperators \subset  \setfinitedifferenceoperators$ are the eigenvalues of the matrix $\streammoments$. However, they are not the eigenvalues of the matrix $\schememoments$.
      Indeed, it is general false that the eigenvalues of $\schememoments$ belong to the space $\setfinitedifferenceoperators$.
      It is interesting to interpret the lattice Boltzmann scheme under the form \Cref{eq:SchemeAB} as discrete-time linear control system with matrices on a commutative ring \cite{brewer1986linear}. The moments are the state of the system evolving \emph{via} the matrix $\schememoments$, whereas the equilibria are the control \emph{via} $\schemeequil$ being a feedback observing only a part of the state, namely the conserved moments.

      We introduce our example of choice, which shall be used through the whole paper.
      \begin{example}[\scheme{1}{3} scheme with one conserved moment]\label{ex:D1Q3OneConservedVariable}
        Consider the \scheme{1}{3} scheme with one conserved moment \cite{dubois2020notion} by taking $\spatialdimensionality = 1$, $\velocitynumber = 3$ and $\consmomentsnumber = 1$. We have $\normalizedvelocityletter_1 = 0$, $\normalizedvelocityletter_2 = 1$ and $\normalizedvelocityletter_3 = -1$ with $\relaxationmatrix = \diagmatrix(0, s, p)$ and 
        \begin{displaymath}
          \momentsmatrix = 
          \begin{pmatrix}
            1 & 1 & 1 \\
            0 & \latticevelocity & -\latticevelocity \\
            -2\latticevelocity^2 & \latticevelocity^2 & \latticevelocity^2
          \end{pmatrix}, \qquad
          \streammoments = 
          \begin{pmatrix}
            \frac{1}{3}(\basicx + 1 + \conj{\basicx})& \frac{1}{2\latticevelocity}(\basicx - \conj{\basicx}) & \frac{1}{6\latticevelocity^2}(\basicx - 2 + \conj{\basicx}) \\
            \frac{\latticevelocity}{3} (\basicx - \conj{\basicx}) & \frac{1}{2}(\basicx + \conj{\basicx}) & \frac{1}{6\latticevelocity}(\basicx - \conj{\basicx}) \\
            \frac{\latticevelocity^2}{3} (\basicx - 2 + \conj{\basicx}) & \frac{\latticevelocity}{2} (\basicx - \conj{\basicx}) & \frac{1}{6}(\basicx + 2 + \conj{\basicx})
          \end{pmatrix},
        \end{displaymath}
        taking $s, p \in ]0, 2]$ and where $\basicx$ has been introduced in \Cref{eq:DefBasicX}.
        It can be used to simulate the non-linear conservation law $\partial_{t} \momentletter_1 + \partial_x \momentletter_2^{\atequilibrium} = 0$ under the acoustic scaling $\timestep \oforder \spacestep$.
        The matrices $\schememoments$ and $\schemeequil$ are
        \begin{align*}
          \schememoments &= 
          \begin{pmatrix}
            \frac{1}{3}(\basicx + 1 + \conj{\basicx})& \frac{(1-s)}{2\latticevelocity}(\basicx - \conj{\basicx}) & \frac{(1-p)}{6\latticevelocity^2}(\basicx - 2 + \conj{\basicx}) \\
            \frac{\latticevelocity}{3} (\basicx - \conj{\basicx}) & \frac{(1-s)}{2}(\basicx + \conj{\basicx}) & \frac{(1-p)}{6\latticevelocity}(\basicx - \conj{\basicx}) \\
            \frac{\latticevelocity^2}{3} (\basicx - 2 + \conj{\basicx}) & \frac{\latticevelocity(1-s)}{2} (\basicx - \conj{\basicx}) & \frac{(1-p)}{6}(\basicx + 2 + \conj{\basicx})
          \end{pmatrix},\\ 
          \schemeequil &= 
          \begin{pmatrix}
            0 & \frac{s}{2\latticevelocity}(\basicx - \conj{\basicx}) & \frac{p}{6\latticevelocity^2}(\basicx - 2 + \conj{\basicx}) \\
            0 & \frac{s}{2}(\basicx + \conj{\basicx}) & \frac{p}{6\latticevelocity}(\basicx - \conj{\basicx}) \\
            0 & \frac{\latticevelocity s}{2} (\basicx - \conj{\basicx}) & \frac{p}{6}(\basicx + 2 + \conj{\basicx})
          \end{pmatrix}.
        \end{align*}
      \end{example}

      \section{Main result of the paper}\label{sec:MainResults}

      With a new way of writing any \lbm scheme using \Cref{def:FiniteDifferenceOperators} and thanks to \Cref{prop:CommutativeRing}, which provides the ideal setting to generalize the Cayley-Hamilton theorem, we can proceed like in \Cref{sec:ExampleODEs} to prove the main result of the paper: any \lbm can be viewed as a multi-step \fd scheme on the conserved variables.
    
      \subsection{Characteristic polynomial and Cayley-Hamilton theorem}
        Polynomials with coefficients in $\setfinitedifferenceoperators$ and matrices with entries in $\setfinitedifferenceoperators$ play a central role in what we are going to develop.
        \begin{definition}[Characteristic polynomial]\label{def:CharacteristicPolynomial}
          Let $\genericcommutativering$ be a commutative ring and $\genericmatrix \in \matrixspace_{r}(\genericcommutativering)$ for some $r \in \naturals^{\star}$. 
          The characteristic polynomial of $\genericmatrix$, denoted $\charactpolynomial_{\genericmatrix} \in \genericcommutativering[\polynomialunknown]$, is given by $\charactpolynomial_{\genericmatrix}\definitionequality (-1)^r \determinant(\genericmatrix - \polynomialunknown\matricial{\identity})$, where $\determinant (\cdot)$ is the determinant and $\matricial{\identity}$ is the $r \times r$ identity matrix.
        \end{definition}
        The naive computation of the characteristic polynomial $\charactpolynomial_{\genericmatrix}$ using its definition \emph{via} the determinant could be computationally expensive, especially when dealing with symbolic computations like in our case.
        For this reason, we employed the Faddeev-Leverrier algorithm \cite{hou1998classroom} which is of polynomial complexity, generally lower than that of the pivot method.
        \begin{algorithm}
            \caption{Faddeev-Leverrier algorithm for the computation of the characteristic polynomial of a square matrix on a commutative ring $\genericcommutativering$.}\label{alg:Faddeev-Leverrier}
            \begin{algorithmic}
            \State \textbf{Input}: $\genericmatrix \in \matrixspace_{r}(\genericcommutativering)$
            \State Set $\genericmatrixtwo = \genericmatrix$
            \For{$\indicepolynomials \in \integerinterval{1}{r}$}
                \If{$\indicepolynomials > 1$}
                    \State Compute $\genericmatrixtwo = \genericmatrix (\genericmatrixtwo + \coeffcharact_{r - \indicepolynomials + 1}\matricial{I})$
                \EndIf
                \State Compute $\coeffcharact_{r-\indicepolynomials} = -\frac{\text{tr}(\genericmatrixtwo)}{\indicepolynomials}$
            \EndFor
            \State \textbf{Output}: the coefficients $(\coeffcharact_{\indicepolynomials})_{\indicepolynomials = 0}^{\indicepolynomials = r} \subset \genericcommutativering$ of the characteristic polynomial $\charactpolynomial_{\genericmatrix} = \sum_{\indicepolynomials = 0}^{\indicepolynomials = r} \coeffcharact_{\indicepolynomials} \polynomialunknown^{\indicepolynomials}$
            \end{algorithmic}
            \end{algorithm}
            The process is detailed in \Cref{alg:Faddeev-Leverrier} and only uses matrix-matrix multiplications and the computation of the trace, denoted by $\text{tr}(\cdot)$.
            \begin{example}
              Coming back to \Cref{ex:D1Q3OneConservedVariable}, it is easy to show either by manual computations or by using \Cref{alg:Faddeev-Leverrier} that $\charactpolynomial_{\schememoments} = \polynomialunknown^3 + \coeffcharact_2 \polynomialunknown^2 + \coeffcharact_1 \polynomialunknown + \coeffcharact_0$ with
              \begin{align*}
                \coeffcharact_2 &= p(\basicx + 4 + \conj{\basicx})/6 + s(\basicx + \conj{\basicx})/2 - (\basicx + 1 + \conj{\basicx}), \\
                                &= -(1-p)(\basicx + 4 + \conj{\basicx})/6 - (1-s)(\basicx + \conj{\basicx})/2 - (\basicx + 1 + \conj{\basicx})/3,\\
                \coeffcharact_1 &= ps(\basicx + 1 + \conj{\basicx})/3 - p(5\basicx + 2 + 5\conj{\basicx})/6 - s(\basicx + 2 + \conj{\basicx})/2 + (\basicx + 1 + \conj{\basicx}), \\
                                &= \left ( -p(1-s)/3 - (p + s - 2)/2\right )(\basicx + \conj{\basicx}) + 2 \left ( (1-s) - s(1-p) - (p+s - 2) \right ),\\
              \coeffcharact_0 &= -(1 - p)(1 - s). 
              \end{align*}
              We see that $\coeffcharact_0 = 0$ if either $s$ or $p$ are equal to one, this shall be discussed in \Cref{sec:RelaxationOnEquilibria}.
              On the other hand $\coeffcharact_1 = 0$ if we have $s = p = 1$.
            \end{example}
          
            A central result used in this work is the Cayley-Hamilton theorem for matrices over a commutative ring, see \cite{brewer1986linear} for the proof, generalizing the same result holding for matrices on a field utilized in \Cref{sec:ExampleODEs}.
            \begin{theorem}[Cayley-Hamilton]\label{thm:CayleyHamilton}
              Let $\genericcommutativering$ be a commutative ring and $\genericmatrix \hspace{-0.05cm} \in \hspace{-0.05cm} \matrixspace_{r}(\genericcommutativering)$ for some $r \in \naturals^{\star}$. Then $\charactpolynomial_{\genericmatrix}$ is a monic polynomial in the ring $\genericcommutativering[\polynomialunknown]$ in the indeterminate $\polynomialunknown$, under the form $\charactpolynomial_{\genericmatrix} = \polynomialunknown^r + \coeffcharact_{r-1}\polynomialunknown^{r-1} + \dots \coeffcharact_1 \polynomialunknown + \coeffcharact_0$
              with $(\coeffcharact_{\indicepolynomials})_{\indicepolynomials=0}^{\indicepolynomials=r} \subset \genericcommutativering$. Then\footnote{Sometimes, we shall indulge to the notation $\charactpolynomial_{\genericmatrix}(\genericmatrix) = \matricial{0}$.} $\genericmatrix^r + \coeffcharact_{r-1}\genericmatrix^{r-1} + \dots + \coeffcharact_1 \genericmatrix + \coeffcharact_0 \matricial{\identity}= \matricial{0}$.
            \end{theorem}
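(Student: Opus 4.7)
The plan is to adapt the classical adjugate-based proof of Cayley--Hamilton, which carries over verbatim to any commutative ring since it only exploits the commutativity of the scalar ring $\genericcommutativering$ together with the standard adjugate identity. That $\charactpolynomial_{\genericmatrix}$ is monic is immediate: expanding $\determinant(\polynomialunknown\matricial{\identity} - \genericmatrix)$ by the Leibniz formula in $\genericcommutativering[\polynomialunknown]$ produces a polynomial whose degree-$r$ coefficient is $1$, and the sign convention of \Cref{def:CharacteristicPolynomial} gives precisely $\charactpolynomial_{\genericmatrix}(\polynomialunknown) = \determinant(\polynomialunknown\matricial{\identity} - \genericmatrix)$.

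For the vanishing statement, I would work in the matrix ring $\matrixspace_{r}(\genericcommutativering[\polynomialunknown])$ and exploit the universal adjugate identity
\begin{equation*}
\adjugate(\polynomialunknown\matricial{\identity} - \genericmatrix) \cdot (\polynomialunknown\matricial{\identity} - \genericmatrix) = \charactpolynomial_{\genericmatrix}(\polynomialunknown) \, \matricial{\identity},
\end{equation*}
which holds over any commutative ring because its proof uses only the standard cofactor expansion. Each entry of $\adjugate(\polynomialunknown\matricial{\identity} - \genericmatrix)$ is a signed $(r-1)\times(r-1)$ minor, hence a polynomial in $\polynomialunknown$ of degree at most $r-1$ with coefficients in $\genericcommutativering$. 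Regrouping by powers of $\polynomialunknown$ yields a decomposition $\adjugate(\polynomialunknown\matricial{\identity} - \genericmatrix) = \sum_{\indicepolynomials=0}^{r-1} \matricial{B}_{\indicepolynomials} \polynomialunknown^{\indicepolynomials}$ with $\matricial{B}_{\indicepolynomials} \in \matrixspace_{r}(\genericcommutativering)$. Substituting this expansion into the identity above and matching the coefficient of $\polynomialunknown^{\indicepolynomials}$ on both sides yields, in $\matrixspace_{r}(\genericcommutativering)$,
\begin{equation*}
\matricial{B}_{r-1} = \matricial{\identity}, \qquad \matricial{B}_{\indicepolynomials-1} - \matricial{B}_{\indicepolynomials}\genericmatrix = \coeffcharact_{\indicepolynomials}\matricial{\identity} \quad \text{for } 1 \leq \indicepolynomials \leq r-1, \qquad -\matricial{B}_{0}\genericmatrix = \coeffcharact_{0}\matricial{\identity}.
\end{equation*}

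The one genuinely delicate step is that we cannot simply ``substitute $\polynomialunknown = \genericmatrix$'' into this polynomial identity, because the coefficient matrices $\matricial{B}_{\indicepolynomials}$ do not in general commute with $\genericmatrix$ in the non-commutative ring $\matrixspace_{r}(\genericcommutativering)$. The remedy is to multiply the relation indexed by $\indicepolynomials$ on the right by $\genericmatrix^{\indicepolynomials}$ (which is legitimate since $\genericmatrix$ commutes with its own powers) and then sum over $\indicepolynomials \in \integerinterval{0}{r}$. The left-hand side becomes a telescoping sum that collapses to $\matricial{0}$, while the right-hand side yields $\sum_{\indicepolynomials=0}^{r} \coeffcharact_{\indicepolynomials} \genericmatrix^{\indicepolynomials} = \charactpolynomial_{\genericmatrix}(\genericmatrix)$. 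I expect the main conceptual obstacle to be precisely this non-commutativity issue: once one recognises that the argument uses only commutativity of the scalar ring $\genericcommutativering$ and never commutativity of coefficient matrices with $\genericmatrix$, the proof transfers without modification from the field setting of \Cref{sec:ExampleODEs} to our present ring $\genericcommutativering = \setfinitedifferenceoperators$.
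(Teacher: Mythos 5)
Your proof is correct and complete: the adjugate identity $\adjugate(\polynomialunknown\matricial{\identity} - \genericmatrix)(\polynomialunknown\matricial{\identity} - \genericmatrix) = \charactpolynomial_{\genericmatrix}\matricial{\identity}$, the coefficient-matching, and the right-multiplication by $\genericmatrix^{\indicepolynomials}$ followed by telescoping correctly circumvent the illegitimate ``substitute $\polynomialunknown = \genericmatrix$'' step, and every ingredient used (Leibniz formula, cofactor expansion) is valid over an arbitrary commutative ring. The paper itself gives no proof of this theorem, deferring instead to the reference on linear systems over commutative rings; your argument is the standard one found there, so there is nothing to reconcile.
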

            This result states that any square matrix with entries in a commutative ring verifies its characteristic equation.
          
          \subsection{Corresponding \fd schemes}
        
          \begin{figure}[h]
            \begin{center}
              \def\svgwidth{0.7\textwidth}\input{./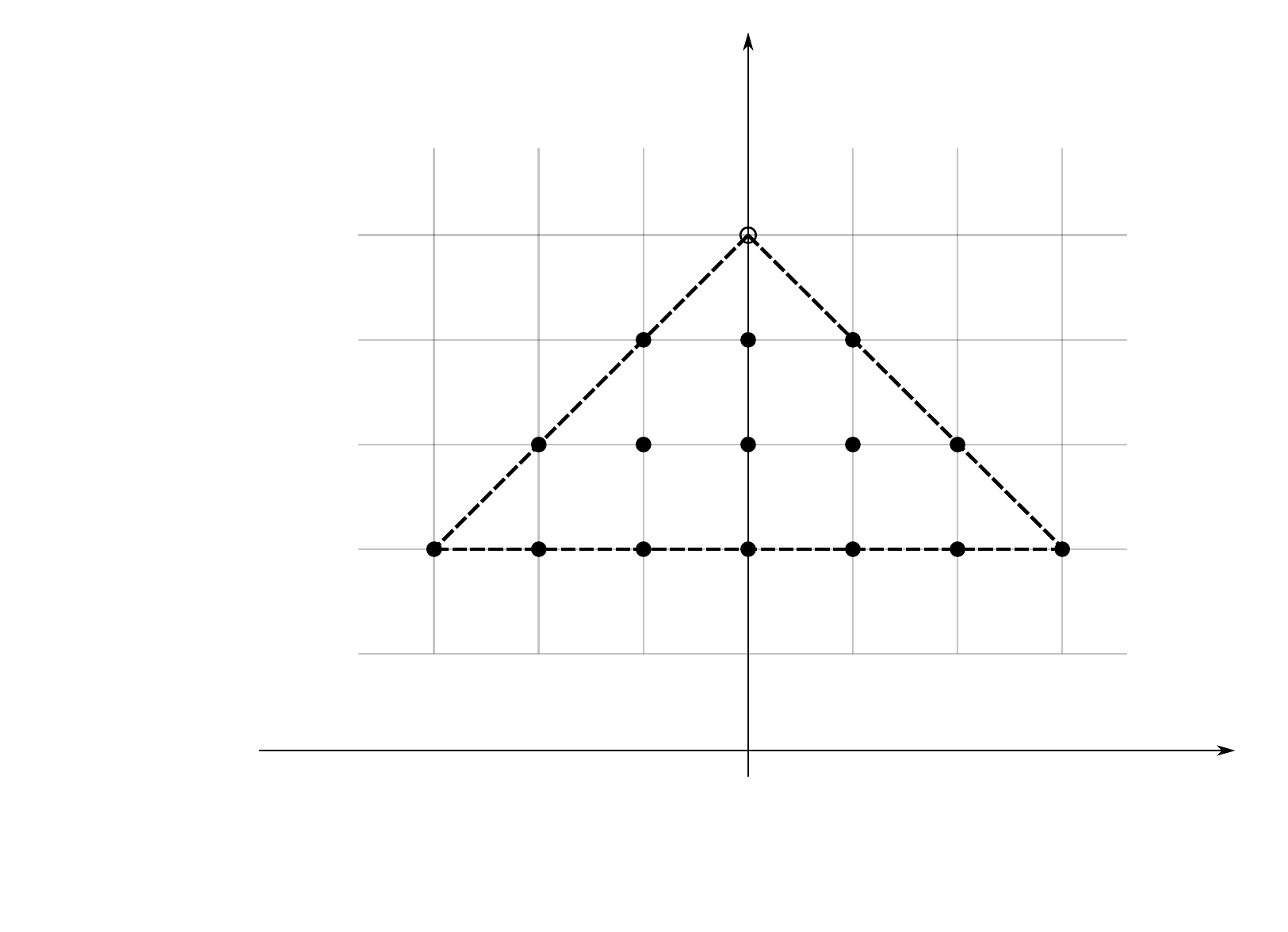_tex}
            \end{center}\caption{\label{fig:coneofinfluence}Maximal space-time domain of dependence of the corresponding \fd scheme for $\consmomentsnumber = 1$ (full black points inside the grey area) by virtue of \Cref{prop:ReductionFiniteDifferenceGeneral} in the case of $\spatialdimensionality = 1$. The maximal space-time slopes are determined by the maximal shift of the considered scheme whereas the number of involved time-steps is at most $\velocitynumber + 1$.}
          \end{figure}
            
            The previous \Cref{thm:CayleyHamilton} is the key for proving the following results, whose backbone is essentially the same than in \Cref{sec:ExampleODEs}.
        
            \subsubsection{One conserved moment}
            We first analyze the case of one conserved moment, namely $\consmomentsnumber = 1$, to keep the presentation as simple as possible.
            We shall eventually deal with $\consmomentsnumber > 1$ once the principles are established. 
        
            \begin{proposition}[Corresponding \fd scheme for $\consmomentsnumber = 1$]\label{prop:ReductionFiniteDifferenceGeneral}
              Let $\consmomentsnumber = 1$, then the \lbm scheme \Cref{eq:SchemeAB} corresponds to a multi-step explicit \fd scheme on the conserved moment $\momentletter_1$ under the form
              \begin{displaymath}
                \momentletter_{1}^{\timevariable + 1} = -\sum_{\indicetimeshift = 0}^{\velocitynumber - 1} \coeffcharact_{\indicetimeshift} \momentletter_{1}^{\timevariable + 1 - \velocitynumber + \indicetimeshift} + \left (\sum_{\indicetimeshift = 0}^{\velocitynumber-1} \left ( \sum_{\ell=0}^{\indicetimeshift} \coeffcharact_{\velocitynumber + \ell - \indicetimeshift} \schememoments^{\ell} \right ) \schemeequil \vectorial{\momentletter}^{\atequilibrium} \evaluationEquilibria{\timevariable - \indicetimeshift}  \right )_1,
              \end{displaymath}
              where $(\coeffcharact_{\indicepolynomials})_{\indicepolynomials = 0}^{\indicepolynomials = \velocitynumber}\subset \setfinitedifferenceoperators$ are the coefficients of $\charactpolynomial_{\schememoments} = \sum_{\indicepolynomials = 0}^{\indicepolynomials = \velocitynumber} \coeffcharact_{\indicepolynomials} \polynomialunknown^{\indicepolynomials}$, the characteristic polynomial of $\schememoments$.
            \end{proposition}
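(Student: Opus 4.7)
The strategy mirrors the ODE warm-up from \Cref{sec:ExampleODEs}, but now with matrices over the commutative ring $\setfinitedifferenceoperators$ rather than over $\reals$, which is precisely what \Cref{thm:CayleyHamilton} licenses. The plan is first to iterate the one-step evolution \Cref{eq:SchemeAB}, then to collapse the high power of $\schememoments$ using Cayley--Hamilton, and finally to read off the first component.

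More concretely, I would start from \Cref{eq:SchemeAB} and prove by induction on $\indicetimeshift \in \integerinterval{0}{\velocitynumber}$ the closed form
\begin{equation*}
  \vectorial{\momentletter}^{\timevariable + \indicetimeshift} \; = \; \schememoments^{\indicetimeshift} \vectorial{\momentletter}^{\timevariable} \; + \; \sum_{j=0}^{\indicetimeshift - 1} \schememoments^{\indicetimeshift - 1 - j} \, \schemeequil \vectorial{\momentletter}^{\atequilibrium} \evaluationEquilibria{\timevariable + j}.
\end{equation*}
The induction step is a one-line application of \Cref{eq:SchemeAB}; all the operations (matrix products, shifts) take place inside $\matrixspace_{\velocitynumber}(\setfinitedifferenceoperators)$, which is well-defined thanks to \Cref{prop:CommutativeRing}.

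Next I would form the linear combination $\sum_{\indicetimeshift = 0}^{\velocitynumber} \coeffcharact_{\indicetimeshift} \vectorial{\momentletter}^{\timevariable + \indicetimeshift}$ with the coefficients of $\charactpolynomial_{\schememoments}$ coming from \Cref{def:CharacteristicPolynomial}. Substituting the closed form above, the $\vectorial{\momentletter}^{\timevariable}$ term is multiplied by $\sum_{\indicetimeshift = 0}^{\velocitynumber} \coeffcharact_{\indicetimeshift} \schememoments^{\indicetimeshift}$, which vanishes by \Cref{thm:CayleyHamilton} applied to $\schememoments \in \matrixspace_{\velocitynumber}(\setfinitedifferenceoperators)$. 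What remains is a double sum over $\indicetimeshift \in \integerinterval{1}{\velocitynumber}$ and $j \in \integerinterval{0}{\indicetimeshift - 1}$ of terms of the form $\coeffcharact_{\indicetimeshift} \schememoments^{\indicetimeshift - 1 - j} \schemeequil \vectorial{\momentletter}^{\atequilibrium} \evaluationEquilibria{\timevariable + j}$. Using that $\coeffcharact_{\velocitynumber} = 1$ (monic polynomial) and rearranging, I isolate $\vectorial{\momentletter}^{\timevariable + \velocitynumber}$ on the left.

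Finally, a clean re-indexing produces the stated form. Setting $\timevariable \leftarrow \timevariable + 1 - \velocitynumber$ shifts the highest index to $\timevariable + 1$ and the moment terms into $\vectorial{\momentletter}^{\timevariable + 1 - \velocitynumber + \indicetimeshift}$. For the equilibrium double sum, I swap the order, set $\indicetimeshift \leftarrow \velocitynumber - 1 - j$ to make the equilibrium time become $\timevariable - \indicetimeshift$, and let $\ell = \indicetimeshift - 1 - j$ inside; a short bookkeeping check shows that the inner index ranges become $\ell \in \integerinterval{0}{\indicetimeshift}$ and the coefficient becomes $\coeffcharact_{\velocitynumber + \ell - \indicetimeshift}$, matching the claimed expression exactly. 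Reading off the first component and noting that since $\consmomentsnumber = 1$ the equilibria $\vectorial{\momentletter}^{\atequilibrium} \evaluationEquilibria{\timevariable - \indicetimeshift}$ are functions of $\momentletter_1^{\timevariable - \indicetimeshift}$ alone yields a genuine multi-step \fd scheme on $\momentletter_1$.

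The only nontrivial technical point is the re-indexing of the double sum; everything else is a direct invocation of Cayley--Hamilton in the right commutative ring. Writing the identity over $\setfinitedifferenceoperators$ rather than inside $\matrixspace_{\velocitynumber}(\setfinitedifferenceoperators)$ is what turns matrix composition into spatial \fd stencils, and this is the conceptual payoff of \Cref{prop:CommutativeRing} and \Cref{rem:LaurentPolynomials}; the powers $\schememoments^{\ell}$ that appear in front of $\schemeequil$ in the final formula are what produce the space-time ``cone of influence'' illustrated in \Cref{fig:coneofinfluence}.
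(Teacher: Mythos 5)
Your proposal is correct and follows essentially the same route as the paper's proof: iterate \Cref{eq:SchemeAB} to a closed form, take the linear combination weighted by the coefficients of $\charactpolynomial_{\schememoments}$, annihilate the term in $\vectorial{\momentletter}$ \emph{via} \Cref{thm:CayleyHamilton} over the commutative ring $\setfinitedifferenceoperators$, use monicity to isolate the newest time level, and re-index the remaining double sum. The only cosmetic difference is that you establish the iterated formula forward in time by induction, whereas the paper iterates backward from $\timevariable+1$ and then performs a temporal shift; your index bookkeeping for the final double sum checks out.
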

            This result means that the conserved moment satisfies an explicit multi-step \fd scheme with at most $\velocitynumber$ steps, thus involving $\velocitynumber + 1$ discrete time instants, see \Cref{fig:coneofinfluence}.
            The maximal size of spatial influence at each past time step can be deduced by looking at \Cref{alg:Faddeev-Leverrier}, derived from the Newton's identities.
        
            It is interesting to observe that also the non-conserved moments satisfy a \fd numerical scheme, see the following proof. However, these schemes would depend on the conserved moment \emph{via} the equilibria and are therefore not independent from the rest of the system.
            \begin{proof}
              Let $\timevariable \in \naturals$. Then for any $\indicetimeshift \in \naturals$, applying \Cref{eq:SchemeAB} recursively we have
              \begin{equation*}
                \vectorial{\momentletter}^{\timevariable + 1} = \schememoments^{\indicetimeshift} \vectorial{\momentletter}^{\timevariable -(\indicetimeshift-1)} + \sum_{\ell = 0}^{\indicetimeshift-1} \schememoments^{\ell} \schemeequil  \vectorial{\momentletter}^{\atequilibrium}\evaluationEquilibria{\timevariable - \ell}.
              \end{equation*}
              We perform a temporal shift in order to fix the first term on the right hand side regardless of the value of $\indicetimeshift$.
              Introduce $\tilde{\timevariable} \definitionequality \timevariable -(\indicetimeshift-1)$, therefore
              \begin{equation*}
                \vectorial{\momentletter}^{\tilde{\timevariable} + \indicetimeshift} = \schememoments^{\indicetimeshift} \vectorial{\momentletter}^{\tilde{\timevariable}} + \sum_{\ell = 0}^{\indicetimeshift-1} \schememoments^{\ell} \schemeequil  \vectorial{\momentletter}^{\atequilibrium}\evaluationEquilibria{\tilde{\timevariable} + \indicetimeshift - 1 - \ell}.
              \end{equation*}
              This holds true, in particular, for any $\indicetimeshift \in  \integerinterval{0}{\velocitynumber}$.
              We can then consider the coefficients $(\coeffcharact_{\indicepolynomials})_{\indicepolynomials = 0}^{\indicepolynomials = \velocitynumber}$ of the characteristic polynomial $\charactpolynomial_{\schememoments} = \sum_{\indicepolynomials = 0}^{\indicepolynomials = \velocitynumber} \coeffcharact_{\indicepolynomials} \polynomialunknown^{\indicepolynomials}$ of $\schememoments$ and write
              \begin{equation*}
                \sum_{\indicetimeshift = 0}^{\velocitynumber} \coeffcharact_{\indicetimeshift} \vectorial{\momentletter}^{\tilde{\timevariable} + \indicetimeshift} = \left ( \sum_{\indicetimeshift = 0}^{\velocitynumber} \coeffcharact_{\indicetimeshift}  \schememoments^{\indicetimeshift} \right ) \vectorial{\momentletter}^{\tilde{\timevariable}} + \sum_{\indicetimeshift = 0}^{\velocitynumber} \coeffcharact_{\indicetimeshift}  \left ( \sum_{\ell = 0}^{\indicetimeshift-1} \schememoments^{\ell} \schemeequil  \vectorial{\momentletter}^{\atequilibrium}\evaluationEquilibria{\tilde{\timevariable} + \indicetimeshift - 1 - \ell} \right ).
              \end{equation*}
              Applying the Cayley-Hamilton \Cref{thm:CayleyHamilton} by virtue of \Cref{prop:CommutativeRing}, we know that $ \sum_{\indicepolynomials = 0}^{\indicepolynomials = \velocitynumber} \coeffcharact_{\indicepolynomials}  \schememoments^{\indicepolynomials} = \matricial{0}$.
              Using the monicity of the characteristic polynomial and coming back by setting $\tilde{\timevariable} + \velocitynumber = \timevariable + 1$ gives
              \begin{equation*}
                \vectorial{\momentletter}^{\timevariable + 1} = -\sum_{\indicetimeshift = 0}^{\velocitynumber - 1} \coeffcharact_{\indicetimeshift} \vectorial{\momentletter}^{\timevariable + 1 - \velocitynumber  + \indicetimeshift} + \sum_{\indicetimeshift = 0}^{\velocitynumber} \coeffcharact_{\indicetimeshift}  \left ( \sum_{\ell = 0}^{\indicetimeshift-1} \schememoments^{\ell} \schemeequil  \vectorial{\momentletter}^{\atequilibrium}\evaluationEquilibria{\timevariable - \velocitynumber + \indicetimeshift - \ell} \right ).
              \end{equation*}
              The last sum can start from $\indicetimeshift = 1$. Performing a change of indices in the last double sum yields the result.
              \begin{equation}\label{eq:ReducedFDOverAllVariables}
                \vectorial{\momentletter}^{\timevariable + 1} = -\sum_{\indicetimeshift = 0}^{\velocitynumber - 1} \coeffcharact_{\indicetimeshift} \vectorial{\momentletter}^{\timevariable + 1 - \velocitynumber  + \indicetimeshift} + \sum_{\indicetimeshift = 0}^{\velocitynumber-1} \left ( \sum_{\ell=0}^{\indicetimeshift} \coeffcharact_{\velocitynumber + \ell - \indicetimeshift} \schememoments^{\ell} \right ) \schemeequil \vectorial{\momentletter}^{\atequilibrium}\evaluationEquilibria{\timevariable - \indicetimeshift}.
              \end{equation}
            \end{proof}

            \begin{example}
              We come back to \Cref{ex:D1Q3OneConservedVariable}. Using \Cref{prop:ReductionFiniteDifferenceGeneral}, we have the corresponding \fd scheme given by
              \begin{align}
                \momentletter_1^{\timevariable + 1} = &-\frac{p}{6}(\basicx + 4 + \conj{\basicx}) \momentletter_1^{\timevariable} - \frac{s}{2}(\basicx + \conj{\basicx}) \momentletter_1^{\timevariable}  + (\basicx + 1 + \conj{\basicx}) \momentletter_1^{\timevariable} -\frac{ps}{3}(\basicx + 1 + \conj{\basicx}) \momentletter_1^{\timevariable-1} \nonumber \\
                &+ \frac{p}{6}(5\basicx + 2 + 5\conj{\basicx}) \momentletter_1^{\timevariable-1} + \frac{s}{2}(\basicx + 2 + \conj{\basicx}) \momentletter_1^{\timevariable-1} - (\basicx + 1 + \conj{\basicx}) \momentletter_1^{\timevariable-1} \nonumber \\
                &+(1 - p)(1 - s)\momentletter_1^{\timevariable-2} + \frac{s}{2\latticevelocity} (\basicx - \conj{\basicx}) \momentletter_2^{\atequilibrium}\evaluationEquilibria{\timevariable} - \frac{s (1-p)}{2\latticevelocity} (\basicx - \conj{\basicx}) \momentletter_2^{\atequilibrium}\evaluationEquilibria{\timevariable - 1} \nonumber \\
                &+\frac{p}{6\latticevelocity^2} (\basicx - 2 + \conj{\basicx}) \momentletter_3^{\atequilibrium}\evaluationEquilibria{\timevariable} + \frac{p (1-s)}{6\latticevelocity^2} (\basicx - 2 + \conj{\basicx}) \momentletter_3^{\atequilibrium}\evaluationEquilibria{\timevariable - 1}. \label{eq:DFSchemeD1Q3Example}
              \end{align}
              One can easily check its consistency -- under the acoustic scaling -- with the target conservation law.
            \end{example}
        
            \begin{remark}
              One could think of allowing $\momentsmatrix$ and/or $\relaxationmatrix$ to depend on the space and time variables. This would imply to consider weights made up of functions instead of the real numbers in \Cref{def:FiniteDifferenceOperators}.
              However, $\setfinitedifferenceoperators$ would no longer be commutative, because the multiplication by a function does not commute with shifts (not shift-invariant according to  \cite{rota1973foundations}). For example, take $\vectorial{z} \in \relatives^{\spatialdimensionality}$ and a function $g : \lattice \to \reals$, then
              \begin{align*}
                \left ( \left (\shiftoperator{\vectorial{z}} \prodoperators \left ( g \shiftoperator{\vectorial{0}} \right ) \right )\genericfunction \right )(\vectorial{\spacevariable}) &= g(\vectorial{\spacevariable} - \vectorial{z} \spacestep) \genericfunction (\vectorial{\spacevariable} - \vectorial{z} \spacestep), \\
                \left ( \left (\left ( g \shiftoperator{\vectorial{0}}  \right ) \prodoperators \shiftoperator{\vectorial{z}} \right )\genericfunction \right )(\vectorial{\spacevariable}) &= g(\vectorial{\spacevariable}) \genericfunction (\vectorial{\spacevariable} - \vectorial{z} \spacestep),
              \end{align*}
              for every $\vectorial{\spacevariable} \in \lattice$ and for any function $\genericfunction : \lattice \to \reals$.
              The right-hand sides are not equal in general, except if $g$ is constant.
            \end{remark}
        
            \subsubsection{Several conserved moments and vectorial schemes}
        
            Consider now to deal with multiple conservation laws, namely $\consmomentsnumber > 1$.
            We select a conserved moment and we consider the other conserved moments as ``slave'' variables as the equilibria have been until so far, for $\consmomentsnumber = 1$, because they imply variables that we eventually want to keep.
            In particular, we utilize different polynomials for different conserved moments to obtain the \fd schemes.
            To formalize this concept, for any square matrix $\genericmatrix \in \matrixspace_{\velocitynumber}(\setfinitedifferenceoperators)$, consider $\cutmatrixsquare{\genericmatrix}{\setoflines} \definitionequality  ( \sum_{\indiceslines \in \setoflines} \canonicalbasisvector_{\indiceslines} \otimes \canonicalbasisvector_{\indiceslines}  ) \genericmatrix   ( \sum_{\indiceslines \in \setoflines} \canonicalbasisvector_{\indiceslines} \otimes \canonicalbasisvector_{\indiceslines}  ) \in \matrixspace_{\velocitynumber}(\setfinitedifferenceoperators)$ for any $\setoflines \subset \integerinterval{1}{\velocitynumber}$, corresponding to the matrix where only the rows and columns of indices $\setoflines$ are conserved and the remaining ones are set to zero. 
            We can also consider the matrix $\cutmatrixsquaretrimmed{\genericmatrix}{\setoflines} \in \matrixspace_{|\setoflines|\times |\setoflines|}(\setfinitedifferenceoperators)$ obtained by keeping only the rows and the columns indexed in $\setoflines$.
            A useful corollary of \Cref{thm:CayleyHamilton} and of the Laplace formula for the determinant is the following.
            \begin{corollary}\label{Cor:CutMatricesPolynomials}
              Let $\genericmatrix \in \matrixspace_{\velocitynumber}(\setfinitedifferenceoperators)$ and $\setoflines \subset \integerinterval{1}{\velocitynumber}$, then one has that $\charactpolynomial_{\cutmatrixsquare{\genericmatrix}{\setoflines}} = \polynomialunknown^{\velocitynumber - \setcardinality{\setoflines}}  \charactpolynomial_{\cutmatrixsquaretrimmed{\genericmatrix}{\setoflines}}$.
              Moreover, the polynomial $\charactpolynomial_{\cutmatrixsquaretrimmed{\genericmatrix}{\setoflines}}$ annihilates $\cutmatrixsquare{\genericmatrix}{\setoflines}$.
            \end{corollary}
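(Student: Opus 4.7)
The plan rests on the elementary observation that, up to a basis permutation bringing the indices in $\setoflines$ to the first positions, $\cutmatrixsquare{\genericmatrix}{\setoflines}$ is block-diagonal with $\cutmatrixsquaretrimmed{\genericmatrix}{\setoflines}$ in the top-left block and a zero block of size $\velocitynumber - \setcardinality{\setoflines}$ elsewhere. This follows immediately from the definition $\cutmatrixsquare{\genericmatrix}{\setoflines} = \matricial{P} \genericmatrix \matricial{P}$ with $\matricial{P} \definitionequality \sum_{\indiceslines \in \setoflines} \canonicalbasisvector_{\indiceslines} \otimes \canonicalbasisvector_{\indiceslines}$ being the diagonal projector onto $\setoflines$; since permutation-similarity preserves both the characteristic polynomial and the evaluation of any polynomial at a matrix, it suffices to reason on this block form.

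For the factorization identity, I would apply Laplace's expansion to $\determinant(\cutmatrixsquare{\genericmatrix}{\setoflines} - \polynomialunknown \matricial{\identity})$ along each of the $\velocitynumber - \setcardinality{\setoflines}$ rows whose index lies outside $\setoflines$. Every such row carries a single non-zero entry, namely $-\polynomialunknown$ on the diagonal, so each expansion contributes a factor $-\polynomialunknown$ and leaves the $\setoflines$-block untouched; after exhausting them one obtains
\[ \determinant(\cutmatrixsquare{\genericmatrix}{\setoflines} - \polynomialunknown \matricial{\identity}) = (-\polynomialunknown)^{\velocitynumber - \setcardinality{\setoflines}} \determinant(\cutmatrixsquaretrimmed{\genericmatrix}{\setoflines} - \polynomialunknown \matricial{\identity}_{\setcardinality{\setoflines}}). \]
Multiplying through by $(-1)^{\velocitynumber}$ per \Cref{def:CharacteristicPolynomial} and recognising $(-1)^{\setcardinality{\setoflines}} \determinant(\cutmatrixsquaretrimmed{\genericmatrix}{\setoflines} - \polynomialunknown \matricial{\identity}_{\setcardinality{\setoflines}}) = \charactpolynomial_{\cutmatrixsquaretrimmed{\genericmatrix}{\setoflines}}$, the surviving sign factors cancel to deliver exactly $\polynomialunknown^{\velocitynumber - \setcardinality{\setoflines}} \charactpolynomial_{\cutmatrixsquaretrimmed{\genericmatrix}{\setoflines}}$, which is the announced identity.

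For the annihilation claim, the idempotency $\matricial{P}^2 = \matricial{P}$ guarantees that every power $(\cutmatrixsquare{\genericmatrix}{\setoflines})^{\indicepolynomials}$ with $\indicepolynomials \geq 1$ inherits the block structure of $\cutmatrixsquare{\genericmatrix}{\setoflines}$, its $\setoflines$-block being $(\cutmatrixsquaretrimmed{\genericmatrix}{\setoflines})^{\indicepolynomials}$. Consequently the $\setoflines$-block of $\charactpolynomial_{\cutmatrixsquaretrimmed{\genericmatrix}{\setoflines}}(\cutmatrixsquare{\genericmatrix}{\setoflines})$ coincides with $\charactpolynomial_{\cutmatrixsquaretrimmed{\genericmatrix}{\setoflines}}(\cutmatrixsquaretrimmed{\genericmatrix}{\setoflines})$, which vanishes by \Cref{thm:CayleyHamilton} applied directly to the trimmed matrix. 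The annihilation is therefore understood upon restriction to the range of $\matricial{P}$, namely $\charactpolynomial_{\cutmatrixsquaretrimmed{\genericmatrix}{\setoflines}}(\cutmatrixsquare{\genericmatrix}{\setoflines}) \matricial{P} = \matricial{0}$, which is precisely the form needed to extract the finite-difference equations on the conserved moments indexed by $\setoflines$ in the multi-conservation generalisation of \Cref{prop:ReductionFiniteDifferenceGeneral}.

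The main point demanding care is the interpretation of \emph{annihilates} in the second claim: because $(\cutmatrixsquare{\genericmatrix}{\setoflines})^{0} = \matricial{\identity}$ contributes the full identity, the complementary block of $\charactpolynomial_{\cutmatrixsquaretrimmed{\genericmatrix}{\setoflines}}(\cutmatrixsquare{\genericmatrix}{\setoflines})$ retains the constant coefficient of $\charactpolynomial_{\cutmatrixsquaretrimmed{\genericmatrix}{\setoflines}}$ and a naive matrix equality to $\matricial{0}$ fails in general; correctness holds only once one composes with $\matricial{P}$. Besides this interpretive subtlety, the only manual work is the bookkeeping of the $(-1)$ signs in the Laplace expansion, which combines smoothly with the sign convention adopted in \Cref{def:CharacteristicPolynomial}.
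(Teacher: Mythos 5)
Your proof is correct and follows exactly the route the paper intends (the paper gives no explicit proof, merely invoking the Laplace formula for the determinant and \Cref{thm:CayleyHamilton}): block-diagonalize via the projector, expand the determinant along the zeroed rows to extract the factor $(-\polynomialunknown)^{\velocitynumber - \setcardinality{\setoflines}}$, and apply Cayley--Hamilton to the trimmed block. Your observation that the annihilation fails as a full matrix identity because the constant coefficient of $\charactpolynomial_{\cutmatrixsquaretrimmed{\genericmatrix}{\setoflines}}$ survives on the complementary block, and holds only after composing with the projector (equivalently, on the rows and columns indexed by $\setoflines$, which is all that the application to \Cref{prop:ReductionVectorialorNl1scheme} requires), is a genuine and worthwhile sharpening of the corollary as stated.
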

            This means that the characteristic polynomial of $\cutmatrixsquare{\genericmatrix}{\setoflines}$ is directly linked to that of the smaller matrix $\cutmatrixsquaretrimmed{\genericmatrix}{\setoflines}$, which is thus faster to compute, and that the latter is an annihilator for the first matrix.

            For any conserved moment indexed by $\indiceconserved \in \integerinterval{1}{\consmomentsnumber}$ we introduce the matrix $\schememoments_{\indiceconserved} \definitionequality \cutmatrixsquare{\schememoments}{\{ \indiceconserved\} \cup \integerinterval{\consmomentsnumber + 1}{\velocitynumber}}$ and $\schememomentsother_{\indiceconserved} \definitionequality \cutmatrixsquare{\schememoments}{\integerinterval{1}{\consmomentsnumber} \smallsetminus \{ \indiceconserved\}}$.
            Notice that we have the decomposition $\schememoments = \schememoments_{\indiceconserved} + \schememomentsother_{\indiceconserved} $.
            Indeed, we ``save'' the conserved moments other than the \thup{$\indiceconserved$} by placing them into $ \schememomentsother_{\indiceconserved}$, which shall not participate in the computation of the characteristic polynomial.
            With this notations, we have generated a family of problems from \Cref{eq:SchemeAB} under the form
            \begin{equation}\label{eq:SchemeABL}
              \vectorial{\momentletter}^{\timevariable + 1} = \schememoments_{\indiceconserved} \vectorial{\momentletter}^{\timevariable} + \schememomentsother_{\indiceconserved}\vectorial{\momentletter}^{\timevariable} + \schemeequil \vectorial{\momentletter}^{\atequilibrium}\evaluationEquilibria{\timevariable}, \qquad \indiceconserved \in \integerinterval{1}{\consmomentsnumber}.
            \end{equation}
            It is useful to stress that the term $\schememoments_{\indiceconserved} \vectorial{\momentletter}^{\timevariable}$ in \Cref{eq:SchemeABL} does not involve any conserved moment other than the \thup{$\indiceconserved$}.
            Conversely, $\schememomentsother_{\indiceconserved}\vectorial{\momentletter}^{\timevariable}$ does not involve any function except the conserved moments other than the \thup{$\indiceconserved$}.
            Then, the corresponding \fd schemes come under the form stated by the following Proposition.
            \begin{proposition}[Corresponding \fd scheme for $\consmomentsnumber \geq 1$]\label{prop:ReductionVectorialorNl1scheme}
              Let $\consmomentsnumber \geq 1$, then the \lbm scheme \Cref{eq:SchemeAB} rewritten as \Cref{eq:SchemeABL} corresponds to the multi-step explicit \fd schemes on the conserved moments $\momentletter_1, \dots, \momentletter_{\consmomentsnumber}$ under the form
              \begin{align*}
                \momentletter_{\indiceconserved}^{\timevariable + 1} = -\sum_{\indicetimeshift = 0}^{\velocitynumber - \consmomentsnumber} \coeffcharact_{\indiceconserved, \indicetimeshift} \momentletter_{\indiceconserved}^{\timevariable - \velocitynumber + \consmomentsnumber + \indicetimeshift} &+ \left ( \sum_{\indicetimeshift = 0}^{\velocitynumber-\consmomentsnumber} \left ( \sum_{\ell=0}^{\indicetimeshift} \coeffcharact_{\indiceconserved, \velocitynumber +1 - \consmomentsnumber + \ell - \indicetimeshift} \schememoments_{\indiceconserved}^{\ell} \right ) \schememomentsother_{\indiceconserved}\vectorial{\momentletter}^{\timevariable - \indicetimeshift} \right )_{\indiceconserved} \\
                &+ \left ( \sum_{\indicetimeshift = 0}^{\velocitynumber-\consmomentsnumber} \left ( \sum_{\ell=0}^{\indicetimeshift} \coeffcharact_{\indiceconserved, \velocitynumber + 1 - \consmomentsnumber + \ell - \indicetimeshift} \schememoments_{\indiceconserved}^{\ell} \right ) \schemeequil \vectorial{\momentletter}^{\atequilibrium}\evaluationEquilibria{\timevariable - \indicetimeshift} \right )_{\indiceconserved},
              \end{align*}
              for any $\indiceconserved \in \integerinterval{1}{\consmomentsnumber}$ where $(\coeffcharact_{\indiceconserved, \indicepolynomials})_{\indicepolynomials = 0}^{\indicepolynomials = \velocitynumber + 1 -\consmomentsnumber}\subset \setfinitedifferenceoperators$ are the coefficients of the characteristic polynomial $\charactpolynomial_{\schememoments_{\indiceconserved}} = \polynomialunknown^{\consmomentsnumber - 1} \sum_{\indicepolynomials = 0}^{\indicepolynomials = \velocitynumber + 1 - \consmomentsnumber}\coeffcharact_{\indiceconserved, \indicepolynomials} \polynomialunknown^{\indicepolynomials} $ of $\schememoments_{\indiceconserved}$.
            \end{proposition}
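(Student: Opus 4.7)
The approach is to transpose the argument of \Cref{prop:ReductionFiniteDifferenceGeneral} to the per-moment matrix $\schememoments_\indiceconserved$, relegating the remaining conserved moments (gathered in $\schememomentsother_\indiceconserved\vectorial{\momentletter}^{\timevariable}$) to a forcing role on equal footing with the equilibrium term $\schemeequil \vectorial{\momentletter}^{\atequilibrium}\evaluationEquilibria{\timevariable}$. Starting from \Cref{eq:SchemeABL} and iterating $\indicetimeshift$ times from an auxiliary origin $\tilde{\timevariable}$, one obtains, in complete analogy with the single-conserved-moment case,
\begin{equation*}
\vectorial{\momentletter}^{\tilde{\timevariable} + \indicetimeshift} = \schememoments_\indiceconserved^{\indicetimeshift} \vectorial{\momentletter}^{\tilde{\timevariable}} + \sum_{\ell = 0}^{\indicetimeshift - 1} \schememoments_\indiceconserved^{\ell} \bigl( \schememomentsother_\indiceconserved \vectorial{\momentletter}^{\tilde{\timevariable} + \indicetimeshift - 1 - \ell} + \schemeequil \vectorial{\momentletter}^{\atequilibrium}\evaluationEquilibria{\tilde{\timevariable} + \indicetimeshift - 1 - \ell} \bigr),
\end{equation*}
the only difference with the derivation of \Cref{eq:ReducedFDOverAllVariables} being the presence of the additional slave forcing $\schememomentsother_\indiceconserved \vectorial{\momentletter}^{\cdot}$.

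Next, multiply this identity by the coefficients $(\coeffcharact_{\indiceconserved, \indicepolynomials})_{\indicepolynomials = 0}^{\velocitynumber + 1 - \consmomentsnumber}$ of the trimmed characteristic polynomial $\charactpolynomial_{\cutmatrixsquaretrimmed{\schememoments}{\{\indiceconserved\} \cup \integerinterval{\consmomentsnumber + 1}{\velocitynumber}}}$ and sum over $\indicetimeshift$ running from $0$ to $\velocitynumber + 1 - \consmomentsnumber$. By \Cref{Cor:CutMatricesPolynomials}, this polynomial annihilates $\schememoments_\indiceconserved$ on the retained block $\{\indiceconserved\} \cup \integerinterval{\consmomentsnumber + 1}{\velocitynumber}$, so the contribution of $\vectorial{\momentletter}^{\tilde{\timevariable}}$ vanishes once the $\indiceconserved$-th component is read off. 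Monicity ($\coeffcharact_{\indiceconserved, \velocitynumber + 1 - \consmomentsnumber} = 1$) allows us to isolate the latest term $\vectorial{\momentletter}^{\tilde{\timevariable} + \velocitynumber + 1 - \consmomentsnumber}$. A final shift $\tilde{\timevariable} = \timevariable - \velocitynumber + \consmomentsnumber$ followed by a swap of summation indices in the double sum -- so that the outer index $\indicetimeshift$ labels the past time offset and the inner index $\ell$ labels the power of $\schememoments_\indiceconserved$, with the attached coefficient becoming $\coeffcharact_{\indiceconserved, \velocitynumber + 1 - \consmomentsnumber + \ell - \indicetimeshift}$ -- yields the displayed formula after extraction of the $\indiceconserved$-th component.

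The main obstacle is performing Cayley--Hamilton with the shorter polynomial $\tilde{\charactpolynomial}$ rather than with the full $\charactpolynomial_{\schememoments_\indiceconserved} = \polynomialunknown^{\consmomentsnumber - 1}\tilde{\charactpolynomial}$ of degree $\velocitynumber$: it is precisely this sharpening that brings the length of the \fd recurrence down from $\velocitynumber + 1$ to $\velocitynumber - \consmomentsnumber + 2$ time instants. Because $\schememoments_\indiceconserved$ has zero rows and columns outside the retained index set, its nontrivial block coincides with $\cutmatrixsquaretrimmed{\schememoments}{\{\indiceconserved\}\cup\integerinterval{\consmomentsnumber+1}{\velocitynumber}}$, and the standard Cayley--Hamilton identity applied to the latter translates into the vanishing of $\tilde{\charactpolynomial}(\schememoments_\indiceconserved)$ on that block. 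Since $\indiceconserved$ belongs to this block, reading off the $\indiceconserved$-th row of the resulting identity gives exactly the desired annihilation; this is the crucial point that distinguishes the vectorial case from the scalar one. Everything else is combinatorial re-indexing, handled as in the proof of \Cref{prop:ReductionFiniteDifferenceGeneral}.
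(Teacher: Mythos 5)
Your proof is correct and follows essentially the same route the paper indicates, namely repeating the argument of \Cref{prop:ReductionFiniteDifferenceGeneral} with $\schememoments_{\indiceconserved}$ in place of $\schememoments$, the slave term $\schememomentsother_{\indiceconserved}\vectorial{\momentletter}^{\timevariable}$ treated as an additional forcing alongside $\schemeequil\vectorial{\momentletter}^{\atequilibrium}$, and \Cref{Cor:CutMatricesPolynomials} supplying the annihilating polynomial of reduced degree $\velocitynumber+1-\consmomentsnumber$. You are in fact slightly more explicit than the paper on the one delicate point -- that the trimmed polynomial only annihilates the rows indexed by $\{\indiceconserved\}\cup\integerinterval{\consmomentsnumber+1}{\velocitynumber}$, which suffices because the $\indiceconserved$-th component is the one read off.
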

            This Proposition states that for each conserved moment, the corresponding \fd scheme has at most $\velocitynumber - \consmomentsnumber$ steps, thus involves $\velocitynumber - \consmomentsnumber + 1$ discrete times.
            This result encompasses and generalizes \Cref{prop:ReductionFiniteDifferenceGeneral}.
            The proof is the same than that of \Cref{prop:ReductionFiniteDifferenceGeneral} by taking advantage of \Cref{Cor:CutMatricesPolynomials}.
            We show in another contribution \cite{bellotti2021equivalentequations} that the result of \Cref{prop:ReductionVectorialorNl1scheme} is the right one to bridge between the consistency analysis of \fd schemes and the Taylor expansions on the \lbm schemes for $\consmomentsnumber \geq 1$ proposed by \cite{dubois2019nonlinear}.
          
            \begin{example}[\scheme{1}{3} for two conservation laws]\label{ex:TwoConservedVariables}
              Consider the \scheme{1}{3} scheme \cite{bellotti2021multiresolution} with $\spatialdimensionality = 1$, $\velocitynumber = 3$ and $\normalizedvelocityletter_1 = 0$, $\normalizedvelocityletter_2 = 1$ and $\normalizedvelocityletter_3 = -1$
              \begin{equation}\label{eq:D1Q3TwoConservedMoment}
                \momentsmatrix = \left ( \begin{matrix}
                                     1 & 1 & 1 \\
                                     0 & \latticevelocity & - \latticevelocity \\
                                     0 & \latticevelocity^2 & \latticevelocity^2
                                    \end{matrix} \right ), \qquad
                \relaxationmatrix = \diagmatrix(0, 0, p),
                \quad \text{with} \quad p \neq 1,
              \end{equation}
              thus having $\consmomentsnumber = 2$.
              This scheme can be used to simulate the system of conservation laws $ \partial_t \momentletter_1 + \partial_x \momentletter_2 = 0$ and $\partial_t \momentletter_2 + \partial_x \momentletter_3^{\atequilibrium} = 0$ under the acoustic scaling $\timestep \oforder \spacestep$.
              Using \Cref{prop:ReductionVectorialorNl1scheme} we have 
              \begin{align*}
                \momentletter_1^{\timevariable + 1} &= \momentletter_1^{\timevariable} + \frac{1}{2}(1-p)(\basicx + \conj{\basicx})\momentletter_1^{\timevariable} - \frac{1}{2}(1-p)(\basicx + \conj{\basicx})\momentletter_1^{\timevariable-1} + \frac{(\basicx - \conj{\basicx})}{2\latticevelocity} \momentletter_2^{\timevariable} \\
                &-\frac{(1-p)(\basicx -\conj{\basicx})}{2\latticevelocity} \momentletter_2^{\timevariable-1} + \frac{p(\basicx - 2 + \conj{\basicx})}{2\latticevelocity^2} \momentletter_3^{\atequilibrium}\evaluationEquilibria{\timevariable}, \\
                \momentletter_2^{\timevariable + 1} &=\frac{1}{2}(2-p)(\basicx + \conj{\basicx})\momentletter_2^{\timevariable}- (1-p)\momentletter_2^{\timevariable-1} + \frac{p(\basicx - \conj{\basicx})}{2\latticevelocity} \momentletter_3^{\atequilibrium}\evaluationEquilibria{\timevariable}.
              \end{align*}
              One could remark that the linear part is different from one scheme to the other, since we have used different polynomials for each conserved moment.
            \end{example}

            \subsection{Initialization schemes}
              In the corresponding \fd schemes in \Cref{prop:ReductionFiniteDifferenceGeneral} and \Cref{prop:ReductionVectorialorNl1scheme}, the only remaining freedom is to devise the initialization schemes for the multi-step schemes at regime, analogously to \Cref{eq:EquivalentODEHigherOrder} in \Cref{sec:ExampleODEs}.
              This is the counter-part of the freedom of choice on the initial data for the original \lbm scheme, which are not necessarily taken at equilibrium, see \cite{junk2015l2}.
              By applying \Cref{eq:SchemeAB} to the initial data as many times as needed, one progressively obtains the initialization schemes, as function of the initial datum.
              It is worthwhile observing that the choice of initial datum does not play any role in the previous procedure and does not influence the stability analysis of \Cref{sec:Stability}. It only comes into play during the consistency analysis of the numerical method, which is not investigated in this paper, in particular, as far as time boundary layers are concerned, see \cite{van2009smooth, rheinlander2007analysis}.

              \section{Examples, simplifications and particular cases}\label{sec:ExamplesAndSimplifications}
  
              Now that the main results of the paper, namely \Cref{prop:ReductionFiniteDifferenceGeneral} and \Cref{prop:ReductionVectorialorNl1scheme}, have been stated and proved, we can analyze and comment some particular cases which deserve a closer look.
              More examples are available in the Appendices.
            
              \begin{example}[ODEs]\label{ex:EDOs}
                To illustrate some basic peculiarities that easily transpose to \lbm schemes, we introduce the following matrices extending the discussion of \Cref{sec:ExampleODEs}.
              \begin{align*}
                \schememoments_{\text{II}} =
                    \begin{pmatrix}
                        1 & 1 & 1 \\
                        0 & 2 & 0 \\
                        0 & 0 & 2
                    \end{pmatrix}, \qquad
                    \schememoments_{\text{III}} =
                    \begin{pmatrix}
                        1 & 1 & 0 \\
                        1 & 2 & 0 \\
                        1 & 2 & 1
                    \end{pmatrix}, \qquad
                    \schememoments_{\text{IV}} =
                    \begin{pmatrix}
                        1 & 0 & 1 \\
                        0 & -2 & 0 \\
                        0 & 0 & 2
                    \end{pmatrix}.
              \end{align*}
            
              For $\schememoments_{\text{II}}$, we have $\charactpolynomial_{\schememoments_{\text{II}}} = \polynomialunknown^3 - 5\polynomialunknown^2 + 8\polynomialunknown - 4$, corresponding to $\odevariable_1''' - 5\odevariable_1'' + 8\odevariable_1' - 4\odevariable_1 = 0$.
              However, contrarily to $\schememoments_{\text{I}}$ in \Cref{sec:ExampleODEs}, the characteristic polynomial $\charactpolynomial_{\schememoments_{\text{II}}}$ does not correspond to the minimal polynomial $\minimalpolynomial_{\schememoments_{\text{II}}} = \polynomialunknown^2 - 3\polynomialunknown + 2$. Thus in this case, we could use the latter to obtain \Cref{eq:EquivalentODEHigherOrder} having $\odevariable_1'' - 3\odevariable_1' + 2 \odevariable_1 = 0$.
              This phenomenon is studied in \Cref{sec:MinimalReductionTimeSteps}.
              It indicates that we can achieve a more compact corresponding ODE by using the annihilating polynomial of smallest degree on every variable.
              This does not change the core of the strategy.

              For $\schememoments_{\text{III}}$, we obtain $\charactpolynomial_{\schememoments_{\text{III}}} = \polynomialunknown^3 - 4\polynomialunknown^2 + 4\polynomialunknown - 1$, corresponding to $\odevariable_1''' - 4\odevariable_1'' + 4\odevariable_1' - \odevariable_1 = 0$. However, by inspecting $\schememoments_{\text{III}}$, one notices that the first two equations do not depend on the last variable $\odevariable_3$.
              For this reason, we could have considered the matrix $\cutmatrixsquaretrimmed{\schememoments_{\text{III}}}{ \{1,2 \} }$ obtained from $\schememoments_{\text{III}}$ by removing the last row and column. In this case $\charactpolynomial_{\cutmatrixsquaretrimmed{\schememoments_{\text{III}}}{ \{1,2 \} }}= \polynomialunknown^2 - 3\polynomialunknown + 1$, corresponding to the equation $\odevariable_1'' - 3 \odevariable_1' + \odevariable_1 = 0$. 
              This kind of situation for \lbm schemes is investigated in \Cref{sec:RelaxationOnEquilibria}.
              It is interesting to observe once more that $\charactpolynomial_{\cutmatrixsquaretrimmed{\schememoments_{\text{III}}}{ \{1,2 \} }}$ divides $ \charactpolynomial_{{\schememoments}_{\text{III}}}$.
              This shows that an initial inspection of the matrix can yield a reduction of the size of the problem that can be achieved by a simple trimming operation, which eliminates some variable from the problem but treats the remaining ones as usual.
            
              Finally, consider $\schememoments_{\text{IV}}$. In this case the characteristic polynomial and the minimal polynomial coincide $\charactpolynomial_{\schememoments_{\text{IV}}}= \polynomialunknown^3 - \polynomialunknown^2 - 4\polynomialunknown + 4$ corresponding to the equation $\odevariable_1''' - \odevariable_1'' - 4\odevariable_1' + 4\odevariable_1 = 0$.
              However, if we take the polynomial $\minimalannpolynomial_{\schememoments_{\text{IV}}}= \polynomialunknown^2  -3 \polynomialunknown + 2$ such that $\minimalannpolynomial_{\schememoments_{\text{IV}}}$ divides $\charactpolynomial_{\schememoments_{\text{IV}}} = \minimalpolynomial_{\schememoments_{\text{IV}}}$ and such that 
              \begin{equation*}
                \minimalannpolynomial_{\schememoments_{\text{IV}}} (\schememoments_{\text{IV}}) = 
                \begin{pmatrix}
                  0 & 0 & 0 \\
                  0 & 12 & 0 \\
                  0 & 0 & 0
                \end{pmatrix},
              \end{equation*}
              we see that it annihilates the first row, thus can be used instead of the other polynomials to yield \Cref{eq:EquivalentODEHigherOrder}. This gives $\odevariable_1'' -3\odevariable_1' + 2\odevariable_1 = 0$. The question is elucidated for \lbm schemes in \Cref{sec:DifferentReductiontStrategy} and show that asking for the annihilation of the whole matrix is too much to achieve a restatement of the equation focusing only on the first variable.
              This strategy is different from the previous one because not all the lines of the matrix are treated in the same way.
              \end{example}
            
              Let us transpose these observations to actual \lbm schemes.
              A question which might arise concerns the possibility of performing better than the characteristic polynomial, in terms of number of steps in the resulting \fd scheme.
              There are cases, which seem quite rare according to our experience (we succeeded in finding only one special case where this happens), where the answer is positive.
              This phenomenon has also been discussed by \cite{fuvcik2021equivalent}, without envisioning a systematic way of guaranteeing the minimality of the \fd scheme obtained by their algorithm.
            
              \subsection{Minimal reductions in terms of time-steps}\label{sec:MinimalReductionTimeSteps}
            
              The first idea to obtain a simpler scheme is to use the minimal polynomial of $\schememoments$ (or its submatrices, if needed) as done for $\schememoments_{\text{II}}$ in \Cref{ex:EDOs}.
              \begin{definition}[Minimal polynomial]
                Let $\genericcommutativering$ be a commutative ring and $\genericmatrix \in \matrixspace_{r}(\genericcommutativering)$ for some $r \in \naturals^{\star}$. 
                We define the minimal polynomial of $\genericmatrix$, denoted $\minimalpolynomial_{\genericmatrix}$ as being the monic polynomial in $\genericcommutativering[\polynomialunknown]$ of smallest degree, thus under the form
                \begin{equation*}
                  \minimalpolynomial_{\genericmatrix} = \polynomialunknown^{\degree( \minimalpolynomial_{\genericmatrix})} + \coeffminimal_{\degree( \minimalpolynomial_{\genericmatrix})-1}\polynomialunknown^{\degree( \minimalpolynomial_{\genericmatrix})-1} + \dots + \coeffminimal_1 \polynomialunknown + \coeffminimal_0,
                \end{equation*}
                with $(\coeffminimal_{\indicepolynomials})_{\indicepolynomials=0}^{\indicepolynomials=\degree(\minimalpolynomial_{\genericmatrix})} \subset \genericcommutativering$ such that
                \begin{equation*}
                  \genericmatrix^{\degree(\minimalpolynomial_{\genericmatrix})} + \coeffminimal_{\degree(\minimalpolynomial_{\genericmatrix})-1}\genericmatrix^{\degree(\minimalpolynomial_{\genericmatrix})-1} + \dots + \coeffminimal_1 \genericmatrix + \coeffminimal_0 \matricial{\identity}= \matricial{0}.
                \end{equation*}
            \end{definition}  
            The characteristic and the minimal polynomial for problems set of a commutative ring are linked by a divisibility property.
            \begin{lemma}\label{lemma:MinimalVsCharacteristic}
              Let $\genericcommutativering$ be a commutative ring and $\genericmatrix \in \matrixspace_{r}(\genericcommutativering)$ for some $r \in \naturals^{\star}$, then $\minimalpolynomial_{\genericmatrix}$ divides $\charactpolynomial_{\genericmatrix}$.
              Therefore, we also have $\degree(\minimalpolynomial_{\genericmatrix}) \leq \degree(\charactpolynomial_{\genericmatrix})$.
            \end{lemma}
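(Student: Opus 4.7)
The plan is to mimic the classical field-case proof, which goes through here because the monicity of the divisor compensates for working over a commutative ring rather than a field. The strategy is a Euclidean division followed by evaluation at $\genericmatrix$, combined with Cayley--Hamilton (\Cref{thm:CayleyHamilton}) and the defining minimality property.

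First, I would invoke polynomial Euclidean division in $\genericcommutativering[\polynomialunknown]$ to write
\begin{equation*}
  \charactpolynomial_{\genericmatrix} = \minimalpolynomial_{\genericmatrix} Q + R, \qquad R, Q \in \genericcommutativering[\polynomialunknown], \qquad \degree(R) < \degree(\minimalpolynomial_{\genericmatrix}).
\end{equation*}
This step is legitimate even though $\genericcommutativering$ is only a commutative ring (not a field) because the divisor $\minimalpolynomial_{\genericmatrix}$ is by definition monic, so its leading coefficient is the unit $1 \in \genericcommutativering$ and the standard long-division algorithm terminates with unique $Q$ and $R$.

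Next, I would evaluate this identity at $\genericmatrix$ inside the (non-commutative) ring $\matrixspace_{r}(\genericcommutativering)$. By \Cref{thm:CayleyHamilton}, the left-hand side vanishes, $\charactpolynomial_{\genericmatrix}(\genericmatrix) = \matricial{0}$; by the definition of the minimal polynomial, $\minimalpolynomial_{\genericmatrix}(\genericmatrix) = \matricial{0}$; hence $R(\genericmatrix) = \matricial{0}$. Then the conclusion follows by minimality: if $R \neq 0$, then $R$ would be a nonzero annihilating polynomial of degree strictly less than $\degree(\minimalpolynomial_{\genericmatrix})$. After normalizing (which requires a short argument in the ring setting, e.g.\ adding a suitable multiple of $\minimalpolynomial_{\genericmatrix}$ of the appropriate shift so as to produce a monic annihilator of degree $<\degree(\minimalpolynomial_{\genericmatrix})$, or working in the field of fractions since $\setfinitedifferenceoperators$ is an integral domain by \Cref{rem:LaurentPolynomials}), this contradicts the very definition of $\minimalpolynomial_{\genericmatrix}$ as the \emph{monic} annihilator of smallest degree. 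Therefore $R = 0$ and $\minimalpolynomial_{\genericmatrix}$ divides $\charactpolynomial_{\genericmatrix}$ in $\genericcommutativering[\polynomialunknown]$. The degree inequality $\degree(\minimalpolynomial_{\genericmatrix}) \leq \degree(\charactpolynomial_{\genericmatrix})$ is then an immediate consequence, since the quotient $Q$ lies in $\genericcommutativering[\polynomialunknown]$ and $\charactpolynomial_{\genericmatrix}$ is nonzero (being monic of degree $r$).

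The main obstacle I anticipate is the subtlety just mentioned: in a general commutative ring, a nonzero annihilator of low degree need not admit a monic representative of the same degree, so the final contradiction with minimality requires a small additional argument, typically via the fraction field when $\genericcommutativering$ is an integral domain (which is the case relevant to the paper). Everything else is routine and mirrors the textbook proof for matrices over a field; the only genuinely new ingredient compared to that classical setting is that the Cayley--Hamilton theorem used here is the ring-valued version given in \Cref{thm:CayleyHamilton}.
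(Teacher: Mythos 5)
Your proof is correct and follows essentially the same route as the paper, which simply notes that the argument is the standard one (Euclidean division by the monic $\minimalpolynomial_{\genericmatrix}$, evaluation at $\genericmatrix$ using \Cref{thm:CayleyHamilton}, and the minimality of $\minimalpolynomial_{\genericmatrix}$ forcing the remainder to vanish), identical in structure to the appendix proof of Lemma~\ref{lemma:divisibility}. Your additional remark about normalizing a non-monic low-degree remainder (via the fraction field of the integral domain $\setfinitedifferenceoperators$) is a legitimate subtlety that the paper glosses over, so it is a welcome refinement rather than a deviation.
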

            \begin{proof}
              The proof  is standard and works the same than that of Lemma \ref{lemma:divisibility}.
            \end{proof}
            Unfortunately, the minimal polynomial cannot be mechanically computed by something like \Cref{alg:Faddeev-Leverrier} as for the characteristic polynomial.
            The same reduction of \Cref{prop:ReductionFiniteDifferenceGeneral} with $\degree(\minimalpolynomial_{\schememoments})$ instead of $\velocitynumber$ and $\coeffminimal_{\indicepolynomials}$ instead of $\coeffcharact_{\indicepolynomials}$ is possible.
            It can be observed that for \Cref{ex:D1Q3OneConservedVariable}, the minimal and the characteristic polynomial of the matrix $\schememoments$ coincide.
            We have been unable to find an example of \lbm scheme where the minimal polynomial does not match the characteristic polynomial.
            
              \subsection{Relaxation on the equilibrium}\label{sec:RelaxationOnEquilibria}
              
                Secondly, a more careful look at relaxation matrix allows us to write it as $\relaxationmatrix = \diagmatrix(0, \dots, 0, \relaxparletter_{\consmomentsnumber + 1}, \dots, \relaxparletter_{\consmomentsnumber + \nontrivialnumber}, 1, \dots, 1)$, where $\relaxparletter_{\indicemoments} \in ]0, 1[ \cup ]1, 2]$ for $\indicemoments \in \integerinterval{\consmomentsnumber + 1}{\consmomentsnumber + \nontrivialnumber}$ for some $\nontrivialnumber \in \naturals$ and the last $\velocitynumber - \nontrivialnumber - \consmomentsnumber$ relaxation parameters are equal to one, meaning that the corresponding moments exactly relax on their respective equilibrium.
                Without loss of generality, we have decided to put them at the end of $\relaxationmatrix$.
                The fact of considering some relaxation rates equal to one is used in the so-called ``regularization'' models, see \cite{coreixas2019comprehensive} and references therein, showing the enhancement of the stability features of the schemes.
            
                In terms of matrix structure, the consequence is that the last $\velocitynumber - \consmomentsnumber - \nontrivialnumber$ columns of $\schememoments$ are zero, analogously to $\schememoments_{\text{III}}$ in \Cref{ex:EDOs}.
                We can therefore employ the following decomposition of $\schememoments$: $\schememoments = \cutmatrixsquare{\schememoments}{\integerinterval{1}{\consmomentsnumber + \nontrivialnumber}} + \cutmatrixsquare{\schememoments}{\integerinterval{\consmomentsnumber + \nontrivialnumber + 1}{\velocitynumber}}$ similarly to \Cref{eq:SchemeABL}.
                We shall consider the characteristic polynomial of $\cutmatrixsquaretrimmed{\schememoments}{\integerinterval{1}{\consmomentsnumber + \nontrivialnumber}}$ (if $\consmomentsnumber = 1$, otherwise the characteristic polynomials of its submatrices), whereas we know that the second matrix does not involve the last $\velocitynumber - \consmomentsnumber - \nontrivialnumber$ moments (indeed, non conserved) because the corresponding columns are zero.
                Therefore, \Cref{prop:ReductionFiniteDifferenceGeneral} and \Cref{prop:ReductionVectorialorNl1scheme} are still valid using $\consmomentsnumber + \nontrivialnumber$ instead of $\velocitynumber$ and the matrix $\cutmatrixsquaretrimmed{\schememoments}{\integerinterval{1}{\consmomentsnumber + \nontrivialnumber}}$ instead of $\schememoments$.
                The corresponding \fd scheme for each conserved moment shall therefore have at most $\nontrivialnumber + 1$ steps instead of $\velocitynumber + 1$.

                \begin{example}\label{ex:D1Q3OneConservedVariableP1}
                  We come back to \Cref{ex:D1Q3OneConservedVariable} taking $p = 1$ and $s \neq 1$, thus having $\nontrivialnumber = 1$ and $\consmomentsnumber = 1$.
                  Following the procedure described before gives $\charactpolynomial_{\cutmatrixsquaretrimmed{\schememoments}{\integerinterval{1}{2}}} = \polynomialunknown^2 + \coeffcharact_1 \polynomialunknown + \coeffcharact_0$ with $\coeffcharact_1 = - (1-s)(\basicx + \conj{\basicx})/2 - (\basicx + 1 + \conj{\basicx})/3$ and $\coeffcharact_0 = (1-s)(\basicx + 4 + \conj{\basicx})/6$ and the corresponding scheme
                  \begin{align*}
                    \momentletter_1^{\timevariable + 1} = & \frac{(1-s)}{2}(\basicx + \conj{\basicx}) \momentletter_1^{\timevariable}  +\frac{1}{3} (\basicx + 1 + \conj{\basicx})\momentletter_1^{\timevariable} - \frac{(1-s)}{6}(\basicx + 4 + \conj{\basicx})\momentletter_1^{\timevariable-1} \\
                    &+ \frac{s}{2\latticevelocity} (\basicx - \conj{\basicx}) \momentletter_2^{\atequilibrium}\evaluationEquilibria{\timevariable} +\frac{1}{6\latticevelocity^2} (\basicx - 2 + \conj{\basicx}) \momentletter_3^{\atequilibrium}\evaluationEquilibria{\timevariable} + \frac{(1-s)}{6\latticevelocity^2} (\basicx - 2 + \conj{\basicx}) \momentletter_3^{\atequilibrium}\evaluationEquilibria{\timevariable - 1}.
                  \end{align*}
                  Unsurprisingly, this is \Cref{eq:DFSchemeD1Q3Example} setting $p = 1$, obtained treating a smaller problem.
                \end{example}
                Observe that the fact of taking all the relaxation rates equal to one, relaxing on the equilibria, is the core mechanism of the relaxation schemes \cite{bouchut2004nonlinear}.
                In this case, there is nothing to do since the original \lbm scheme is already in the form of a \fd scheme on the conserved moments.
                Our way of proposing a corresponding \fd scheme using characteristic polynomials is flawlessly compatible with this setting.

              \subsection{A different reduction strategy}\label{sec:DifferentReductiontStrategy}
              The third idea is to proceed as for $\schememoments_{\text{IV}}$ in \Cref{ex:EDOs}, namely looking for a polynomial which does not annihilate the whole matrix $\schememoments$.
              To simplify the presentation, we limit ourselves to $\consmomentsnumber = 1$, namely one conserved moment.
              We sketch this strategy to account for previous results on the subject \cite{d2009viscosity, ginzburg2009variation}.
              Nevertheless, we shall justify its limited interest at the end of the Section.
              
              \begin{example}[Link scheme with magic parameter]\label{ex:Link}
                Consider the so-called link scheme by \cite{d2009viscosity, ginzburg2009variation} defined for any spatial dimension $\spatialdimensionality = 1, 2, 3$ considering $\velocitynumber = 1 + 2\numbercoupleslinkscheme$ with $\numbercoupleslinkscheme \in \naturals^{\star}$ with $\vectorial{\normalizedvelocityletter}_1 = \vectorial{0}$ and any $\vectorial{\normalizedvelocityletter}_{2\populationindex} = -\vectorial{\normalizedvelocityletter}_{2\populationindex + 1} \neq \vectorial{0}$ for $\populationindex \in \integerinterval{1}{\numbercoupleslinkscheme}$.
                The system is taken with all the so-called ``magic parameters'' equal to one-fourth, therefore $\relaxationmatrix = \diagmatrix(0, s, 2-s, s, 2-s, \dots) \in \matrixspace_{1+2\numbercoupleslinkscheme}(\reals)$ for $s \neq 1$ and
                \begin{equation*}
                  \momentsmatrix = 
                  \left(\begin{array}{@{}c|cc|cc|cc@{}}
                  1 & 1 & 1 & \cdots & \cdots & 1 & 1 \\ \hline
                  0 & \latticevelocity & -\latticevelocity & 0 & 0 & 0 & 0 \\
                  0 & \latticevelocity^2 & \latticevelocity^2 & 0 & 0 & 0 & 0 \\ \hline
                  \vdots & 0 & 0 & \ddots & \ddots &  0 & 0 \\
                  \vdots & 0 & 0 & \ddots & \ddots &  0 & 0 \\ \hline
                  0 & 0 & 0 & 0 & 0 & \latticevelocity & -\latticevelocity \\
                  0 & 0 & 0 & 0 & 0 & \latticevelocity^2 & \latticevelocity^2 \\
                  \end{array}\right) \in \matrixspace_{1+2\numbercoupleslinkscheme}(\reals),
                \end{equation*}
                The claim in \cite{ginzburg2009variation} is that the corresponding \fd scheme is the two-steps scheme
                \begin{align}
                  \momentletter_1^{\timevariable+1} = (2-s) \momentletter_1^{\timevariable} - (1-s)\momentletter_1^{\timevariable-1} &+ s \left ( \sum_{\ell = 1}^{\numbercoupleslinkscheme} \frac{(\shiftoperator{\vectorial{\normalizedvelocityletter}_{2\ell}}-\shiftoperator{-\vectorial{\normalizedvelocityletter}_{2\ell}})}{2\latticevelocity} \momentletter_{2\ell}^{\atequilibrium}\evaluationEquilibria{\timevariable} \right ) \nonumber \\
                  &+ \frac{(2-s)}{2} \left ( \sum_{\ell = 1}^{\numbercoupleslinkscheme} \frac{(\shiftoperator{\vectorial{\normalizedvelocityletter}_{2\ell}} - 2 +\shiftoperator{-\vectorial{\normalizedvelocityletter}_{2\ell}})}{\latticevelocity^2} \momentletter_{2\ell + 1}^{\atequilibrium}\evaluationEquilibria{\timevariable}\right ). \label{eq:EquivalentFDLinkScheme}
                \end{align}
                This is true regardless of the choice of $\spatialdimensionality$ and $\numbercoupleslinkscheme$.
                By direct inspection of the corresponding \fd scheme \Cref{eq:EquivalentFDLinkScheme}, we can say that this reduction has been achieved using the polynomial $\minimalannpolynomial_{\schememoments}= \polynomialunknown^2 - (2 - s) \polynomialunknown + (1-s)$. However, it can be easily shown that this polynomial does not annihilate the entire matrix $\schememoments$ as the minimal and characteristic polynomials do: it only does so for the first row.
              \end{example}
              Indeed, we have seen for ODEs in \Cref{ex:EDOs} that we might try just to annihilate the first row of the problem.
              Thus, we define the polynomial annihilating all the first row of the matrix $\schememoments$, except the very first element.
              \begin{definition}\label{def:MPAMFR}
                We call $\minimalannpolynomial_{\schememoments} \in \setfinitedifferenceoperators[\polynomialunknown]$ ``minimal polynomial annihilating most of the first row'' (MPAMFR) of $\schememoments$ the monic polynomial of minimal degree under the form
                \begin{equation*}
                  \minimalannpolynomial_{\schememoments} = \polynomialunknown^{\degree(\minimalannpolynomial_{\schememoments})} + \coeffannminimal_{\degree(\minimalannpolynomial_{\schememoments})-1} \polynomialunknown^{\degree(\minimalannpolynomial_{\schememoments})-1}  + \dots + \coeffannminimal_{1} \polynomialunknown + \coeffannminimal_0,
                \end{equation*}
                with $(\coeffannminimal_{\indicepolynomials})_{\indicepolynomials = 0}^{\indicepolynomials = \degree(\minimalannpolynomial_{\schememoments})} \subset \setfinitedifferenceoperators$ such that for every $\indicescolumns \in \integerinterval{2}{\velocitynumber}$
                \begin{equation*}
                  (\schememoments^{\degree(\minimalannpolynomial_{\schememoments})})_{1\indicescolumns} + \coeffannminimal_{\degree(\minimalannpolynomial_{\schememoments})-1} (\schememoments^{\degree(\minimalannpolynomial_{\schememoments})-1})_{1\indicescolumns} + \dots + \coeffannminimal_1 (\schememoments)_{1\indicescolumns} = 0.
                \end{equation*}
              \end{definition}
              By seeing the coefficients of this unknown polynomial as the unknowns of a linear system, the problem of finding $\minimalannpolynomial_{\schememoments}$ can be rewritten in terms of matrices.\footnote{The same procedure is used to find the minimal polynomial, since we do not have a definition like \Cref{def:CharacteristicPolynomial}.}
              Let $K \in \integerinterval{1}{\degree(\minimalpolynomial_{\schememoments})}$ and construct the matrix of variable size  
              \begin{equation}\label{eq:MatrixVariableSize}
                \matrixminannpol_{K} = 
                \begin{pmatrix}
                    (\schememoments)_{12} & \cdots & (\schememoments^K)_{12} \\
                    \vdots & & \vdots \\
                    (\schememoments)_{1, \nontrivialnumber +1} & \cdots & (\schememoments^K)_{1, \nontrivialnumber +1} \\
                \end{pmatrix}\in \matrixspace_{\nontrivialnumber \times K}(\setfinitedifferenceoperators).
              \end{equation}
              Therefore, we want to find the smallest $K \in \integerinterval{1}{\degree(\minimalpolynomial_{\schememoments})}$ such that $\kernel(\matrixminannpol_{K}) \neq \{ \vectorial{0} \}$, that is, the smallest $K \in \integerinterval{1}{\degree(\minimalpolynomial_{\schememoments})}$   such that $\matrixminannpol_{K}$ is not injective.
              Since the kernel of the ``minimal'' $\matrixminannpol_{K}$ shall be a $\setfinitedifferenceoperators$-module of dimension $1$, we can chose a monic polynomial by always taking $\coeffannminimal_K = 1$.
              On the other hand, it should be observed that the zero order coefficient $\coeffannminimal_0$ remains free.
              This underdetermination comes from the fact that we do not request that $\minimalannpolynomial_{\schememoments}$ annihilates the whole first row.
              \begin{proposition}\label{prop:ReductionFiniteDifferenceAnnhilationFirstRowOnly}
                Let $\consmomentsnumber = 1$, then the lattice Boltzmann scheme \eqref{eq:SchemeAB} can be rewritten as a \fd scheme on the conserved moment $\momentletter_1$ under the form
                \begin{align}
                  {\momentletter}_1^{{\timevariable} + 1} = &- \sum_{\indicetimeshift = 1}^{\degree(\minimalannpolynomial_{\schememoments}) - 1} \coeffannminimal_{\indicetimeshift} {\momentletter}_1^{{\timevariable} +1 -  \degree(\minimalannpolynomial_{\schememoments}) + \indicetimeshift} + \left ( \sum_{\indicetimeshift = 1}^{\degree(\minimalannpolynomial_{\schememoments})} \coeffannminimal_{\indicetimeshift} (\schememoments^{\indicetimeshift})_{11} \right ) \momentletter_1^{\timevariable+1- \degree(\minimalannpolynomial_{\schememoments})}  \nonumber \\
                  &+\left ( \sum_{\indicetimeshift = 0}^{\degree(\minimalannpolynomial_{\schememoments}) - 1} \left ( \sum_{\ell = 0}^{\indicetimeshift} \coeffannminimal_{\degree(\minimalannpolynomial_{\schememoments}) + \ell - \indicetimeshift} \schememoments^{\ell} \right ) \schemeequil \vectorial{\momentletter}^{\atequilibrium}\evaluationEquilibria{\timevariable - \indicetimeshift} \right )_1, \label{eq:FiniteDifferenceReduced1}
                \end{align}
                where $(\coeffannminimal_{\indicepolynomials})_{\indicepolynomials = 1}^{\indicepolynomials = \degree(\minimalannpolynomial_{\schememoments})} \subset \setfinitedifferenceoperators$ are the coefficients of $\minimalannpolynomial_{\schememoments} = \sum_{\indicepolynomials = 0}^{\indicepolynomials = \degree(\minimalannpolynomial_{\schememoments})} \coeffannminimal_{\indicepolynomials} \polynomialunknown^{\indicepolynomials}$.
              \end{proposition}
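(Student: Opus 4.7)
The strategy is to mimic the proof of \Cref{prop:ReductionFiniteDifferenceGeneral}, replacing the use of the characteristic polynomial (and the Cayley--Hamilton theorem) by the MPAMFR polynomial $\minimalannpolynomial_{\schememoments}$ from \Cref{def:MPAMFR}, and then projecting the resulting vector identity onto its first component in order to exploit the partial annihilation property that defines $\minimalannpolynomial_{\schememoments}$. Since $\coeffannminimal_0$ is underdetermined by \Cref{def:MPAMFR}, we fix the convention $\coeffannminimal_0 = 0$ so that the reconstructed scheme involves only integer time shifts strictly less than $\degree(\minimalannpolynomial_{\schememoments})$ in the $\momentletter_1$ history coefficients.

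Concretely, I would start from the recursion already derived in the proof of \Cref{prop:ReductionFiniteDifferenceGeneral}, namely
\begin{equation*}
  \vectorial{\momentletter}^{\tilde{\timevariable} + \indicetimeshift} = \schememoments^{\indicetimeshift} \vectorial{\momentletter}^{\tilde{\timevariable}} + \sum_{\ell = 0}^{\indicetimeshift-1} \schememoments^{\ell} \schemeequil  \vectorial{\momentletter}^{\atequilibrium}\evaluationEquilibria{\tilde{\timevariable} + \indicetimeshift - 1 - \ell},
\end{equation*}
valid for every $\indicetimeshift \in \integerinterval{0}{\degree(\minimalannpolynomial_{\schememoments})}$. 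I would then form the linear combination $\sum_{\indicetimeshift = 1}^{\degree(\minimalannpolynomial_{\schememoments})} \coeffannminimal_{\indicetimeshift} (\cdot)$ of these identities. On the left-hand side this yields a weighted sum of future moments; on the right-hand side the homogeneous term becomes $\bigl( \sum_{\indicetimeshift = 1}^{\degree(\minimalannpolynomial_{\schememoments})} \coeffannminimal_{\indicetimeshift} \schememoments^{\indicetimeshift} \bigr)\vectorial{\momentletter}^{\tilde{\timevariable}}$, while the equilibrium term can be reorganized by swapping the two sums in $\indicetimeshift$ and $\ell$ exactly as in the proof of \Cref{prop:ReductionFiniteDifferenceGeneral}.

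The central step is then to take the first component of this vector equation. By the defining property of $\minimalannpolynomial_{\schememoments}$ (\Cref{def:MPAMFR}), the first row of $\sum_{\indicetimeshift = 1}^{\degree(\minimalannpolynomial_{\schememoments})} \coeffannminimal_{\indicetimeshift} \schememoments^{\indicetimeshift}$ vanishes in all columns $\indicescolumns \in \integerinterval{2}{\velocitynumber}$; hence only the $(1,1)$ entry survives, and all non-conserved moments $\momentletter_{2},\dots,\momentletter_{\velocitynumber}$ (which form the coupling that obstructs closing the scheme on $\momentletter_1$ alone) are eliminated from the homogeneous part. The resulting scalar equation reads, schematically, $\sum_{\indicetimeshift = 1}^{\degree(\minimalannpolynomial_{\schememoments})} \coeffannminimal_{\indicetimeshift} \momentletter_1^{\tilde{\timevariable}+\indicetimeshift} = \bigl(\sum_{\indicetimeshift = 1}^{\degree(\minimalannpolynomial_{\schememoments})} \coeffannminimal_{\indicetimeshift}(\schememoments^{\indicetimeshift})_{11}\bigr) \momentletter_1^{\tilde{\timevariable}} + (\text{equilibria})_1$.

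To conclude, I use monicity ($\coeffannminimal_{\degree(\minimalannpolynomial_{\schememoments})} = 1$) to isolate the highest time-shift moment, perform the substitution $\tilde{\timevariable} = \timevariable + 1 - \degree(\minimalannpolynomial_{\schememoments})$, and finally reshuffle indices in the double sum coming from the equilibrium terms in exactly the same way as in \Cref{prop:ReductionFiniteDifferenceGeneral} (this reshuffling is purely combinatorial and requires no new ideas). The only genuine subtlety compared to the earlier proposition is that here $\minimalannpolynomial_{\schememoments}(\schememoments)$ is not the zero matrix, so the $(1,1)$ entry contributes the extra term $\bigl(\sum_{\indicetimeshift = 1}^{\degree(\minimalannpolynomial_{\schememoments})} \coeffannminimal_{\indicetimeshift}(\schememoments^{\indicetimeshift})_{11}\bigr)\momentletter_1^{\timevariable+1-\degree(\minimalannpolynomial_{\schememoments})}$ present in \Cref{eq:FiniteDifferenceReduced1}; this is the essential algebraic difference, but it causes no structural difficulty since the coefficient lives in $\setfinitedifferenceoperators$ and is explicit. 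The main obstacle is therefore bookkeeping rather than conceptual, and amounts to handling the free $\coeffannminimal_0$ consistently (by choosing it equal to $0$) so that the final history window for $\momentletter_1$ truly has length $\degree(\minimalannpolynomial_{\schememoments})$.
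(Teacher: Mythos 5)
Your proposal is correct and follows essentially the same route as the paper's own proof: form the $\coeffannminimal_{\indicetimeshift}$-weighted combination of the iterated recursion, project onto the first component so that \Cref{def:MPAMFR} kills all columns $\indicescolumns \geq 2$ and leaves only the explicit $(\schememoments^{\indicetimeshift})_{11}$ contribution, then use monicity and the usual time-index reshuffling. The only cosmetic difference is that the paper keeps the $\coeffannminimal_0$ term in the combination and observes that it cancels between the two sides, whereas you drop it from the outset by convention; since $\coeffannminimal_0$ never appears in \Cref{eq:FiniteDifferenceReduced1}, both choices are equivalent.
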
 
              The proof can be found in the Appendices.
              Looking at \Cref{eq:FiniteDifferenceReduced1}, we see that we do not need the value of $\coeffannminimal_0$ to reduce the scheme, neither to reduce $\schememoments$ nor to deal with the equilibria through $\schemeequil$.
              Changing time indices and putting everything on the left hand side
              \begin{align*}
                   \sum_{\indicetimeshift = 1}^{\degree(\minimalannpolynomial_{\schememoments})} \coeffannminimal_{\indicetimeshift} {\momentletter}_1^{\tilde{\timevariable} + \indicetimeshift} - \left ( \sum_{\indicetimeshift = 1}^{\degree(\minimalannpolynomial_{\schememoments})} \coeffannminimal_{\indicetimeshift} (\schememoments^k)_{11} \right ) \momentletter^{\tilde{\timevariable}} &= \sum_{\indicetimeshift = 0}^{\degree(\minimalannpolynomial_{\schememoments})} \tilde{\coeffannminimal}_{\indicetimeshift} {\momentletter}_1^{\tilde{\timevariable} + \indicetimeshift}, \\ 
                   &= \sum_{\indicetimeshift = 1}^{\degree(\minimalannpolynomial_{\schememoments})} \coeffannminimal_{\indicetimeshift}  \left ( \sum_{\ell = 0}^{\indicetimeshift-1} \schememoments^{\ell} \schemeequil  \vectorial{\momentletter}^{\atequilibrium}\evaluationEquilibria{\tilde{\timevariable} + \indicetimeshift - 1 - \ell} \right )_1, \\
                   &= \sum_{\indicetimeshift = 1}^{\degree(\minimalannpolynomial_{\schememoments})} \tilde{\coeffannminimal}_{\indicetimeshift}  \left ( \sum_{\ell = 0}^{\indicetimeshift-1} \schememoments^{\ell} \schemeequil  \vectorial{\momentletter}^{\atequilibrium}\evaluationEquilibria{\tilde{\timevariable} + \indicetimeshift - 1 - \ell} \right )_1,
              \end{align*}
              where we have defined
              \begin{equation*}
                  \tilde{\coeffannminimal}_{\indicepolynomials} = 
                  \begin{cases}
                      \coeffannminimal_{\indicepolynomials}, \qquad &\indicepolynomials \in \integerinterval{1}{ \degree(\minimalannpolynomial_{\schememoments})}, \\
                      -\sum_{\ell = 1}^{\ell = \degree(\minimalannpolynomial_{\schememoments})} \coeffannminimal_{\ell} (\schememoments^{\ell})_{11}, \qquad &\indicepolynomials = 0.
                  \end{cases}
              \end{equation*}
              This generates a polynomial, which is indeed $\minimalannpolynomial_{\schememoments}$ but with a precise choice of $\coeffannminimal_0$.
              We will soon give a precise characterization of this particular polynomial.
              \begin{definition}
                We call $\minimalannrowpolynomial_{\schememoments} \in \setfinitedifferenceoperators[\polynomialunknown]$ ``minimal polynomial annihilating the first row'' (MPAFR) of $\schememoments$ the monic polynomial of minimal degree under the form
                \begin{equation*}
                  \minimalannrowpolynomial_{\schememoments} = \polynomialunknown^{\degree(\minimalannrowpolynomial_{\schememoments})} + \tilde{\coeffannminimal}_{\degree(\minimalannrowpolynomial_{\schememoments})-1} \polynomialunknown^{\degree(\minimalannrowpolynomial_{\schememoments})-1}  + \dots + \tilde{\coeffannminimal}_{1} \polynomialunknown^{1} + \tilde{\coeffannminimal}_0,
                \end{equation*}
                such that for every $\indicescolumns \in \integerinterval{1}{\velocitynumber}$
                \begin{equation}\label{eq:tmpAnnhihilating}
                  (\schememoments^{\degree(\minimalannrowpolynomial_{\schememoments})})_{1\indicescolumns} + \tilde{\coeffannminimal}_{\degree(\minimalannrowpolynomial_{\schememoments})-1} (\schememoments^{\degree(\minimalannrowpolynomial_{\schememoments})-1})_{1\indicescolumns} + \dots + \tilde{\coeffannminimal}_1 (\schememoments)_{1\indicescolumns} + \tilde{\coeffannminimal}_0 = 0.
                \end{equation}
              \end{definition}
              Compared to \Cref{def:MPAMFR}, we are just asking the property to hold also for the very first element of the first row, namely for $\indicescolumns = 1$.
              This polynomial is $\minimalannpolynomial_{\schememoments}$ for a particular choice of $\coeffannminimal_0$. 
              It has been deduced from the reduction of the lattice Boltzmann scheme.
              \begin{lemma}
                The polynomial of degree $\degree(\minimalannpolynomial_{\schememoments})$ given by
                \begin{equation*}
                  \minimalannrowpolynomial_{\schememoments} = \polynomialunknown^{\degree(\minimalannpolynomial_{\schememoments})} + \coeffannminimal_{\degree(\minimalannpolynomial_{\schememoments})-1} \polynomialunknown^{\degree(\minimalannpolynomial_{\schememoments})-1} + \dots + \coeffannminimal_1 \polynomialunknown -\sum_{l = 1}^{\degree(\minimalannpolynomial_{\schememoments})} \coeffannminimal_l (\schememoments^l)_{11},
                \end{equation*}
                where $(\coeffannminimal_{\indicepolynomials})_{\indicepolynomials = 1}^{\indicepolynomials = \degree(\minimalannpolynomial_{\schememoments})} \subset \setfinitedifferenceoperators$ are the coefficients of a MPAMFR $\minimalannpolynomial_{\schememoments}$ of $\schememoments$ being $\minimalannpolynomial_{\schememoments} = \sum_{\indicepolynomials = 0}^{\indicepolynomials = \degree(\minimalannpolynomial_{\schememoments})} \coeffannminimal_{\indicepolynomials} \polynomialunknown^{\indicepolynomials}$, is the MPAFR $\minimalannrowpolynomial_{\schememoments}$ of $\schememoments$.
              \end{lemma}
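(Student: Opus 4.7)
The plan is to split the statement into two verifications: that the displayed polynomial $P$ annihilates the whole first row of $\schememoments$, and that no monic polynomial of strictly smaller degree can do so.

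For the first verification, I would read \Cref{def:MPAMFR} and the definition of MPAFR as the conditions $(\minimalannpolynomial_{\schememoments}(\schememoments))_{1\indicescolumns}=0$ for $\indicescolumns \in \integerinterval{2}{\velocitynumber}$ and $(\minimalannrowpolynomial_{\schememoments}(\schememoments))_{1\indicescolumns}=0$ for $\indicescolumns \in \integerinterval{1}{\velocitynumber}$ respectively, where the constant term of any polynomial contributes to the matrix evaluation through $\tilde{\coeffannminimal}_0\matricial{\identity}$ and hence $\tilde{\coeffannminimal}_0\delta_{1\indicescolumns}$ to entry $(1,\indicescolumns)$. With this reading, the candidate polynomial $P = \minimalannpolynomial_{\schememoments} - \coeffannminimal_0 + \tilde{\coeffannminimal}_0$ (same non-constant coefficients as a MPAMFR, new constant $\tilde{\coeffannminimal}_0 = -\sum_{l=1}^{\degree(\minimalannpolynomial_{\schememoments})} \coeffannminimal_l (\schememoments^l)_{11}$) satisfies, for $\indicescolumns \in \integerinterval{2}{\velocitynumber}$,
\begin{equation*}
(P(\schememoments))_{1\indicescolumns} = (\minimalannpolynomial_{\schememoments}(\schememoments))_{1\indicescolumns} - \coeffannminimal_0\delta_{1\indicescolumns} + \tilde{\coeffannminimal}_0\delta_{1\indicescolumns} = 0,
\end{equation*}
by \Cref{def:MPAMFR} and since $\delta_{1\indicescolumns}=0$. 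For $\indicescolumns=1$ one plugs in the chosen $\tilde{\coeffannminimal}_0$ and obtains $(P(\schememoments))_{11} = \sum_{l=1}^{\degree(\minimalannpolynomial_{\schememoments})} \coeffannminimal_l(\schememoments^l)_{11} + \tilde{\coeffannminimal}_0 = 0$ by construction. Hence $P$ qualifies as a monic annihilator of the entire first row, i.e., it is an admissible MPAFR candidate.

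For the minimality, the argument is a one-line contradiction: suppose there were a monic polynomial $R \in \setfinitedifferenceoperators[\polynomialunknown]$ of degree $k < \degree(\minimalannpolynomial_{\schememoments})$ whose evaluation on $\schememoments$ has vanishing first row. Then in particular $(R(\schememoments))_{1\indicescolumns} = 0$ for $\indicescolumns \in \integerinterval{2}{\velocitynumber}$, so $R$ is itself a MPAMFR candidate of degree $k$, contradicting the minimality of $\degree(\minimalannpolynomial_{\schememoments})$. Thus $\degree(\minimalannrowpolynomial_{\schememoments}) \geq \degree(\minimalannpolynomial_{\schememoments})$, and since Step~1 exhibits a valid candidate of degree exactly $\degree(\minimalannpolynomial_{\schememoments})$, we conclude $\degree(P) = \degree(\minimalannrowpolynomial_{\schememoments})$ and $P$ is a MPAFR of $\schememoments$.

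I expect the genuinely technical difficulty of the proof to be almost absent: both steps are essentially bookkeeping once one interprets the constant term of a polynomial evaluated on a matrix correctly (the implicit $\matricial{\identity}$ factor). The only delicate point is that the MPAMFR coefficients $(\coeffannminimal_{\indicepolynomials})_{\indicepolynomials=1}^{\degree(\minimalannpolynomial_{\schememoments})}$ are uniquely determined up to the scaling fixed by monicity (this follows from the one-dimensional kernel of $\matrixminannpol_{K}$ discussed around \Cref{eq:MatrixVariableSize}), so that the definition of $\tilde{\coeffannminimal}_0$ does not depend on an arbitrary choice of MPAMFR. With that remark, the lemma reduces to the two short verifications above.
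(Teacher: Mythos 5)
Your proof is correct and follows essentially the same route as the paper, whose entire argument is the observation that only the case $\indicescolumns = 1$ of \Cref{eq:tmpAnnhihilating} remains to be checked, and that it holds by the choice of constant term. You additionally spell out the minimality argument (a smaller-degree annihilator of the full first row would contradict the minimality of $\degree(\minimalannpolynomial_{\schememoments})$), which the paper leaves implicit; this is a welcome completion rather than a departure.
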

              \begin{proof}
                We are only left to check \Cref{eq:tmpAnnhihilating} for $\indicescolumns = 1$.
              \end{proof}
            
              So in order to reduce the lattice Boltzmann scheme to a \fd scheme using the new strategy, considering a MPAMFR or the MPAFR is exactly the same thing.
              Moreover, the MPAFR (but not the more general MPAMFR) can be linked to the minimal/characteristic polynomial.\footnote{The principle is the same than the one linking the characteristic and the minimal polynomial through divisibility.}
              \begin{lemma}\label{lemma:divisibility}
                Let $\minimalpolynomial_{\schememoments} \in \setfinitedifferenceoperators[\polynomialunknown]$ be the minimal polynomial of $\schememoments$, then $\minimalannrowpolynomial_{\schememoments}$ exists and divides the minimal polynomial $\minimalpolynomial_{\schememoments}$. Moreover $\degree(\minimalannrowpolynomial_{\schememoments}) = \degree(\minimalannpolynomial_{\schememoments}) \leq \degree(\minimalpolynomial_{\schememoments})$.
              \end{lemma}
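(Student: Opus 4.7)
The plan is to establish the three claims in the lemma --- existence of $\minimalannrowpolynomial_{\schememoments}$, the divisibility $\minimalannrowpolynomial_{\schememoments} \mid \minimalpolynomial_{\schememoments}$, and the degree identity $\degree(\minimalannrowpolynomial_{\schememoments}) = \degree(\minimalannpolynomial_{\schememoments})$ --- using Euclidean-type polynomial division in $\setfinitedifferenceoperators[\polynomialunknown]$ as the main tool, in close analogy with the standard proof that the minimal polynomial divides the characteristic polynomial. The degree inequality $\degree(\minimalannrowpolynomial_{\schememoments}) \leq \degree(\minimalpolynomial_{\schememoments})$ will follow as a corollary of divisibility once we note that $\setfinitedifferenceoperators[\polynomialunknown]$ is an integral domain, by virtue of Remark~\ref{rem:LaurentPolynomials}.

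For existence, I would observe that $\minimalpolynomial_{\schememoments}(\schememoments) = \matricial{0}$ by definition, so in particular the first row of $\minimalpolynomial_{\schememoments}(\schememoments)$ vanishes. Hence the collection of monic polynomials in $\setfinitedifferenceoperators[\polynomialunknown]$ whose evaluation at $\schememoments$ has zero first row is non-empty, since it contains $\minimalpolynomial_{\schememoments}$ itself; well-ordering of $\naturals$ then yields a monic element of minimal degree, which is $\minimalannrowpolynomial_{\schememoments}$. The degree identity $\degree(\minimalannrowpolynomial_{\schememoments}) = \degree(\minimalannpolynomial_{\schememoments})$ is then a direct consequence of the preceding lemma, which produces the MPAFR from a MPAMFR by prescribing only the constant coefficient and leaving the other coefficients, hence the degree, untouched. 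For divisibility, since $\minimalannrowpolynomial_{\schememoments}$ is monic, Euclidean division is available and provides
\begin{equation*}
  \minimalpolynomial_{\schememoments} = q \, \minimalannrowpolynomial_{\schememoments} + r, \qquad \text{with } r = 0 \text{ or } \degree(r) < \degree(\minimalannrowpolynomial_{\schememoments}).
\end{equation*}
Evaluating at $\schememoments$, using $\minimalpolynomial_{\schememoments}(\schememoments) = \matricial{0}$, and noting that right-multiplication by $q(\schememoments)$ preserves the property of having a zero first row (since $q(\schememoments)$ and $\minimalannrowpolynomial_{\schememoments}(\schememoments)$ commute as polynomials in $\schememoments$, and the first row of a product $A B$ equals the first row of $A$ times $B$), one concludes that $r(\schememoments)$ has zero first row. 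Minimality of $\degree(\minimalannrowpolynomial_{\schememoments})$ must then force $r = 0$, which gives the desired divisibility and the degree inequality.

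The hard part is precisely the last implication: deducing $r = 0$ from the pair of conditions ``$r(\schememoments)$ has first row zero'' and ``$\degree(r) < \degree(\minimalannrowpolynomial_{\schememoments})$''. Over a field this is automatic, since one would normalize the leading coefficient of $r$ to produce a monic competitor of strictly smaller degree, contradicting minimality; but over the general commutative ring $\setfinitedifferenceoperators$ we cannot in general invert leading coefficients. The point to be handled carefully is that, because both $\minimalpolynomial_{\schememoments}$ and $\minimalannrowpolynomial_{\schememoments}$ are monic, the quotient $q$ coming out of the division is itself monic, so the structure of the remainder is controlled; combined with the fact that $\setfinitedifferenceoperators$ is an integral domain (even a UFD, by Remark~\ref{rem:LaurentPolynomials}), one can push through the standard argument. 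This is exactly the same technical subtlety shared with the proof of Lemma~\ref{lemma:MinimalVsCharacteristic}, and is the reason the authors declare the two proofs to be identical in spirit.
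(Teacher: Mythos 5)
Your proof follows the same route as the paper's own: Euclidean division of $\minimalpolynomial_{\schememoments}$ by the monic $\minimalannrowpolynomial_{\schememoments}$, the observation that the first row of $R(\schememoments)$ must vanish (since the first row of $\minimalannrowpolynomial_{\schememoments}(\schememoments)\,Q(\schememoments)$ equals the zero first row of $\minimalannrowpolynomial_{\schememoments}(\schememoments)$ multiplied by $Q(\schememoments)$), minimality to force $R \equiv 0$ with the constant-remainder case treated separately, existence via well-ordering, and the degree identity from the preceding lemma. The leading-coefficient subtlety you flag -- that a nonzero, non-monic remainder of smaller degree annihilating the first row does not immediately contradict minimality among \emph{monic} polynomials when leading coefficients of $\setfinitedifferenceoperators$ need not be invertible -- is left equally unaddressed in the paper's proof, which simply asserts the contradiction, so your attempt is no less complete than the original.
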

              The proof is given in the Appendices.
              We now show how this discussion allows to account for \Cref{ex:Link} and more specifically for \Cref{eq:EquivalentFDLinkScheme}.
              \begin{example}
                We come back to \Cref{ex:Link}. 
                We introduce the notations $\avglink_{\ell} \definitionequality \shiftoperator{\vectorial{\normalizedvelocityletter}_{2\ell}} + \shiftoperator{\vectorial{\normalizedvelocityletter}_{2\ell + 1}}$, the ``average'' on the \thup{$\ell$} link and $\difflink_{\ell} \definitionequality \shiftoperator{\vectorial{\normalizedvelocityletter}_{2\ell}} - \shiftoperator{\vectorial{\normalizedvelocityletter}_{2\ell + 1}}$, the ``difference'' on the \thup{$\ell$} link, for any $\ell \in \integerinterval{1}{\numbercoupleslinkscheme}$.
                Elementary computations show that
                \begin{equation*}
                  \matrixminannpol_{2} =
                  \begin{pmatrix}
                   \frac{(1-s)\difflink_1}{2\latticevelocity} & \frac{(1-s)(2-s)\difflink_1}{2\latticevelocity} \\
                   -\frac{(1-s)(\avglink_1-2)}{2\latticevelocity^2} & -\frac{(1-s)(2-s)(\avglink_1 - 2)}{2\latticevelocity^2} \\ 
                   \frac{(1-s)\difflink_2}{2\latticevelocity} & \frac{(1-s)(2-s)\difflink_2}{2\latticevelocity} \\
                   -\frac{(1-s)(\avglink_2-2)}{2\latticevelocity^2} & -\frac{(1-s)(2-s)(\avglink_2 - 2)}{2\latticevelocity^2} \\ 
                   \vdots & \vdots \\
                   \frac{(1-s)\difflink_{\numbercoupleslinkscheme}}{2\latticevelocity} & \frac{(1-s)(2-s)\difflink_{\numbercoupleslinkscheme}}{2\latticevelocity} \\
                   -\frac{(1-s)(\avglink_{\numbercoupleslinkscheme}-2)}{2\latticevelocity^2} & -\frac{(1-s)(2-s)(\avglink_{\numbercoupleslinkscheme} - 2)}{2\latticevelocity^2}
                  \end{pmatrix} \in \matrixspace_{(2\numbercoupleslinkscheme) \times 2} (\setfinitedifferenceoperators).
                \end{equation*}
                The equations have the same structure for every $2 \times 2$ block: thus we can find a solution by studying each block if it turns out that the solution does not depend on the block indices. Let  $\ell \in \integerinterval{1}{\numbercoupleslinkscheme}$. We want to solve for non-trivial $\coeffannminimal_{1}, \coeffannminimal_{2}$ such that
                \begin{equation*}
                  \begin{cases}
                  \frac{(1-s)\difflink_{\ell}}{2\latticevelocity}\coeffannminimal_{1} + \frac{(1-s)(2-s)\difflink_{\ell}}{2\latticevelocity}\coeffannminimal_{2} &= 0, \\
                  -\frac{(1-s)(\avglink_{\ell}-2)}{2\latticevelocity^2}\coeffannminimal_{1}  -\frac{(1-s)(2-s)(\avglink_{\ell} - 2)}{2\latticevelocity^2}\coeffannminimal_{2} &= 0,
                  \end{cases}
                \end{equation*}
                thus we clearly see that the solution is $\coeffannminimal_{1} = -(2-s)\coeffannminimal_{2}$, but we can pick $\coeffannminimal_{2} = 1$ to have a monic polynomial. Therefore $\coeffannminimal_{1} = -(2-s)$ independently from $\ell$.
                Thus, the polynomial $\minimalannpolynomial_{\schememoments} = \polynomialunknown^2 - (2-s) \polynomialunknown + \coeffannminimal_0$. Picking $\coeffannminimal_0 = - \coeffannminimal_2 (\schememoments^2)_{11} - \coeffannminimal_1 (\schememoments)_{11} = -1 + (2-s) = 1-s$ yields the polynomial $\minimalannrowpolynomial_{\schememoments}$ as previously seen.
              \end{example}
            
              This approach correctly recovers the result from \cite{ginzburg2009variation} following a different path.
              However, to our understanding, this new strategy is of moderate interest since it relies on an \emph{ad hoc} and problem-dependent procedure \Cref{eq:MatrixVariableSize} which can be practically exploited only for highly constrained systems, see \Cref{ex:Link} or for schemes of modest size.
              Moreover, for general schemes, it yields the same result than \Cref{prop:ReductionFiniteDifferenceGeneral} using the characteristic polynomial (take \Cref{ex:D1Q3OneConservedVariable} for instance) but passing from an inefficient approach to the computation of the polynomial instead of using the more performant \Cref{alg:Faddeev-Leverrier}.
            
              \subsection{Conclusion and future perspectives}
              Beyond the divisibility property \Cref{prop:ReductionFiniteDifferenceAnnhilationFirstRowOnly}, the fact of not utilizing the characteristic polynomial with its explicit \Cref{def:CharacteristicPolynomial} constitutes -- due to the previously highlighted lack of generality -- an obstruction to show the link with the Taylor expansions \cite{dubois2019nonlinear}, as we did in \cite{bellotti2021equivalentequations}.  
              We therefore stress once more the interest of the general formulations by \Cref{prop:ReductionFiniteDifferenceGeneral} and \Cref{prop:ReductionVectorialorNl1scheme}, which shall allow to enlighten the issue of the stability of the schemes, as in the following Section.

              \section{Stability}\label{sec:Stability}
              Arguably, the \emph{von Neumann} analysis is the most widely used technique to investigate the stability of \lbm schemes. Though employed for any number $\consmomentsnumber$ of conserved moments, we shall consider it only for $\consmomentsnumber = 1$, to keep mathematical rigour.
              The \emph{von Neumann} analysis consists in the linearization of the problem around an equilibrium state \cite{sterling1996stability}, followed by the rewrite of the scheme using the Fourier transform and the study of the spectrum of the derived matrix.
              Unsurprisingly, this is also common in the framework of \fd methods, see Chapter 4 in \cite{gustafsson2013time} and Chapter 4 in \cite{strikwerda2004finite}.
              We observe that the linear $\elltwospace$ stability, though being widespread, is not the only possible one for \lbm schemes: the interested reader can refer to  \cite{junk2009weighted, junk2015l2} for the $\elltwospace$-weighted stability, to \cite{caetano2019result} for the $\ellpspace{1}$ stability and finally to \cite{dubois2020notion} for the $\ellpspace{\infty}$ stability.
              Future efforts shall be dedicated to the investigation of the impact of \Cref{prop:ReductionFiniteDifferenceGeneral} and \Cref{prop:ReductionVectorialorNl1scheme} on these different notions of stability.
            
              \subsection{Fourier analysis}
              
              We briefly introduce the Fourier analysis on lattices following Chapter 2 of \cite{strikwerda2004finite}.
              We define $\fouriertransformletter: \ell^2(\lattice) \cap \ell^1(\lattice) \to \elltwospace([-\pi/\spacestep, \pi / \spacestep]^{\spatialdimensionality})$, called Fourier transform, defined as follows.
              Let $\genericfunction \in \ell^2(\lattice) \cap \ell^1(\lattice)$, then
              \begin{equation*}
                \fouriertransform{\genericfunction}(\vectorial{\freqvariable}) \definitionequality \frac{1}{(2\pi)^{\spatialdimensionality/2}} \sum_{\vectorial{\spacevariable} \in \lattice} e^{-\imag \vectorial{\spacevariable} \cdot \vectorial{\freqvariable}} \genericfunction(\vectorial{\spacevariable}), \qquad \vectorial{\freqvariable} \in \left [ -\frac{\pi}{\spacestep}, \frac{\pi}{\spacestep}\right ]^{\spatialdimensionality}.
              \end{equation*}
              In this Section, the regularity assumptions shall hold for any function.
              The Fourier transform is extended to less regular entities by density arguments.
              The interest of the Fourier transform lies in the fact that it is an isometry, thanks to the Parseval's identity \cite{strikwerda2004finite} and that it allows to represent the action of operators acting \emph{via} the convolution product (also called filters) like the \fd operators $\setfinitedifferenceoperators$ as a multiplication on $\complex$.
              We can therefore represent any shift operator in the Fourier space.
              \begin{lemma}[Shift operator in the Fourier space]\label{lemma:ShiftFourierSpace}
                Let $\vectorial{z} \in \relatives^{\spatialdimensionality}$ and $\genericfunction \in \ell^2(\lattice) \cap \ell^1(\lattice)$, then
                \begin{equation*}
                  \fouriertransform{\shiftoperator{\vectorial{z}}\genericfunction}(\vectorial{\freqvariable}) = e^{-\imag \spacestep \vectorial{z} \cdot \vectorial{\freqvariable}} \fouriertransform{\genericfunction}(\vectorial{\freqvariable}), \qquad \vectorial{\freqvariable} \in \left [ -\frac{\pi}{\spacestep}, \frac{\pi}{\spacestep}\right ]^{\spatialdimensionality}.
                \end{equation*}
                Therefore, the representation of the shift operator $\shiftoperator{\vectorial{z}}$ in the Fourier space is $\shiftoperatorfourier{\vectorial{z}} \definitionequality e^{-\imag \spacestep \vectorial{z} \cdot \vectorial{\freqvariable}}$ and acts multiplicatively.
              \end{lemma}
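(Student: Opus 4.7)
The plan is to proceed by a direct computation starting from the definition of $\fouriertransformletter$ applied to $\shiftoperator{\vectorial{z}}\genericfunction$. Since we are working with the counting measure on $\lattice = \spacestep\relatives^{\spatialdimensionality}$ rather than an integral, everything reduces to manipulating a single absolutely convergent series (the $\ell^1$ assumption on $\genericfunction$ guarantees absolute convergence and thus legitimizes any re-indexing).

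The steps I would carry out are as follows. First, I would write
\begin{equation*}
  \fouriertransform{\shiftoperator{\vectorial{z}}\genericfunction}(\vectorial{\freqvariable}) = \frac{1}{(2\pi)^{\spatialdimensionality/2}} \sum_{\vectorial{\spacevariable} \in \lattice} e^{-\imag \vectorial{\spacevariable} \cdot \vectorial{\freqvariable}} (\shiftoperator{\vectorial{z}}\genericfunction)(\vectorial{\spacevariable}),
\end{equation*}
and then substitute the definition of the shift operator from \Cref{def:ShiftOperators}, namely $(\shiftoperator{\vectorial{z}}\genericfunction)(\vectorial{\spacevariable}) = \genericfunction(\vectorial{\spacevariable} - \vectorial{z}\spacestep)$. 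Next, I would perform the change of summation index $\vectorial{\spacevariable}' \definitionequality \vectorial{\spacevariable} - \vectorial{z}\spacestep$: this is a valid re-indexing since $\vectorial{z}\spacestep \in \lattice$, so the map $\vectorial{\spacevariable} \mapsto \vectorial{\spacevariable}'$ is a bijection of $\lattice$ onto itself, and the series is absolutely convergent thanks to $\genericfunction \in \ell^1(\lattice)$. Splitting the exponential $e^{-\imag(\vectorial{\spacevariable}' + \vectorial{z}\spacestep)\cdot \vectorial{\freqvariable}} = e^{-\imag \spacestep \vectorial{z}\cdot \vectorial{\freqvariable}} e^{-\imag \vectorial{\spacevariable}'\cdot \vectorial{\freqvariable}}$ and factoring the first term out of the sum immediately produces
\begin{equation*}
  \fouriertransform{\shiftoperator{\vectorial{z}}\genericfunction}(\vectorial{\freqvariable}) = e^{-\imag \spacestep \vectorial{z}\cdot \vectorial{\freqvariable}} \cdot \frac{1}{(2\pi)^{\spatialdimensionality/2}} \sum_{\vectorial{\spacevariable}' \in \lattice} e^{-\imag \vectorial{\spacevariable}'\cdot \vectorial{\freqvariable}} \genericfunction(\vectorial{\spacevariable}') = e^{-\imag \spacestep \vectorial{z}\cdot \vectorial{\freqvariable}} \fouriertransform{\genericfunction}(\vectorial{\freqvariable}),
\end{equation*}
which is the announced identity. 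The second part of the statement, introducing $\shiftoperatorfourier{\vectorial{z}}$, is then just a notational definition coming from this relation.

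There is no real obstacle: this is the standard translation-into-modulation duality of the (lattice) Fourier transform. The only point deserving a word of care is the shift by $\vectorial{z}\spacestep$ rather than by an arbitrary real vector — it is precisely because the lattice has step $\spacestep$ that the reindexing preserves $\lattice$ and no aliasing issue appears. Once this is noted, the rest is a one-line computation, and the extension to distributions in $\elltwospace(\lattice)$ follows by the density of $\ell^2(\lattice)\cap \ell^1(\lattice)$ and the continuity of $\shiftoperator{\vectorial{z}}$ and $\fouriertransformletter$, as remarked just before the statement.
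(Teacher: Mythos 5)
Your proposal is correct and follows exactly the same route as the paper's proof: substitute the definition of the shift, re-index the sum by $\vectorial{y} = \vectorial{\spacevariable} - \vectorial{z}\spacestep$, and factor out the exponential $e^{-\imag \spacestep \vectorial{z}\cdot\vectorial{\freqvariable}}$. Your added remarks on absolute convergence and on the re-indexing being a bijection of $\lattice$ are justifications the paper leaves implicit, but the argument is identical.
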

              \begin{proof}
                Let $\genericfunction: \lattice \to \reals$ with $\genericfunction \in \ell^2 (\lattice) \cap \ell^1(\lattice)$. We have, for every wave number $\vectorial{\freqvariable} \in [-\pi/\spacestep, \pi/\spacestep]^{\spatialdimensionality}$
                \begin{align*}
                  \fouriertransform{\shiftoperator{\vectorial{z}}\genericfunction}(\vectorial{\freqvariable}) &= \frac{1}{(2\pi)^{\spatialdimensionality/2}} \sum_{\vectorial{\spacevariable} \in \lattice} e^{-\imag \vectorial{\spacevariable} \cdot \vectorial{\freqvariable}} \genericfunction(\vectorial{\spacevariable} - \vectorial{z}\spacestep), \\
                    &= \frac{1}{(2\pi)^{\spatialdimensionality/2}} \sum_{\vectorial{y} \in \lattice} e^{-\imag (\vectorial{y} + \vectorial{z}\spacestep) \cdot \vectorial{\freqvariable}} \genericfunction(\vectorial{y}) = e^{-\imag \spacestep \vectorial{z} \cdot \vectorial{\xi}}  \fouriertransform{\genericfunction}(\vectorial{\freqvariable}).
                \end{align*}
            \end{proof}
              The rewrite of $\groupunitsshiftoperators$ and $\setfinitedifferenceoperators$ in the Fourier space is done in the straightforward manner, namely
              \begin{equation*}
                \groupunitsshiftoperatorsfourier \definitionequality \left \{ \shiftoperatorfourier{\vectorial{z}} = e^{-\imag \spacestep \vectorial{z} \cdot \vectorial{\freqvariable}} ~ \text{with} ~ \vectorial{z} \in \relatives^{\spatialdimensionality} \right \}, \qquad
                \setfinitedifferenceoperatorsfourier \definitionequality \reals \groupunitsshiftoperatorsfourier,
              \end{equation*}
              where the sum and the products are the standard ones on $\complex$.
              All that has been said for $\setfinitedifferenceoperators$ holds for the new representation in the Fourier space $\setfinitedifferenceoperatorsfourier$. 
              Indeed, for any $\genericfinitedifference = \sum_{\genericunit \in \groupunitsshiftoperators} \coeffgenericfinitedifference_{\genericunit} \genericunit \in \setfinitedifferenceoperators$, we indicate $\fourier{\genericfinitedifference} \definitionequality \sum_{{\genericunit} \in \groupunitsshiftoperators} \coeffgenericfinitedifference_{\genericunit} \fourier{\genericunit} \in \setfinitedifferenceoperatorsfourier$ its representative in the Fourier space. 
              Considering a matrix $\genericmatrix \in \matrixspace_{\velocitynumber}(\setfinitedifferenceoperators)$, its Fourier representation $\fourier{\genericmatrix} \in \matrixspace_{\velocitynumber}(\setfinitedifferenceoperatorsfourier)$ is obtained by taking the entry-wise Fourier transform of $\genericmatrix$.
              Moreover, we have that
              \begin{equation}\label{eq:TransformationCharPolyFourier}
                \charactpolynomial_{\genericmatrix} = \sum_{\indicepolynomials = 0}^{\velocitynumber} \coeffcharact_{\indicepolynomials} \polynomialunknown^{\indicepolynomials}, \qquad \xLongleftrightarrow[]{\fouriertransformletter} \qquad \charactpolynomial_{\hat{\genericmatrix}} = \sum_{\indicepolynomials = 0}^{\velocitynumber} \hat{\coeffcharact}_{\indicepolynomials} \polynomialunknown^{\indicepolynomials},
              \end{equation}
              where $(\coeffcharact_{\indicepolynomials})_{\indicepolynomials = 0}^{\indicepolynomials = \velocitynumber} \subset \setfinitedifferenceoperators$ and $(\fourier{\coeffcharact}_{\indicepolynomials})_{\indicepolynomials = 0}^{\indicepolynomials = \velocitynumber} \subset \setfinitedifferenceoperatorsfourier$.
            
              \subsection{Correspondence between the stability analysis for \fd and lattice Boltzmann schemes}

              Considering linear (or linearized) schemes written in the Fourier space is, thanks to the Parseval's identity, the standard setting to perform the $\elltwospace$ linear stability analysis both for \lbm and \fd schemes.
              Assume to deal only with one conserved variable, thus $\consmomentsnumber = 1$. 
            
              The polynomial associated with a linear \fd scheme -- or quite often, its Fourier representation -- is called amplification polynomial, see Chapter 4 of \cite{strikwerda2004finite}. The study of its roots in the Fourier space is the key of the so-called \emph{von Neumann} stability analysis.
              \begin{definition}[\emph{von Neumann} stability of a \fd scheme]\label{def:vonNeumannFD}
                Consider a multi-step linear \fd scheme for the variable $\fdvariable$ under the form\footnote{In this formulation, we do not account for the presence of source terms, since they do not play any role in the linear stability analysis.}
                \begin{equation}\label{eq:GenericFDScheme}
                  \sum_{\indicetimeshift = 0}^{\velocitynumber} \coeffamplificationpoly_{\velocitynumber - \indicetimeshift} \fdvariable^{\timevariable + 1 - \indicetimeshift} = 0, 
                \end{equation}
                for $(\coeffamplificationpoly_{\indicepolynomials})_{\indicepolynomials = 0}^{\indicepolynomials = \velocitynumber} \subset \setfinitedifferenceoperators$.
                Consider its amplification polynomial $\amplificationpolynomial \definitionequality \sum_{\indicepolynomials = 0}^{\indicepolynomials = \velocitynumber} \coeffamplificationpoly_{\indicepolynomials} \polynomialunknown^{\indicepolynomials}$, with corresponding amplification polynomial in the Fourier space $\fourier{\amplificationpolynomial} \definitionequality \sum_{\indicepolynomials = 0}^{\indicepolynomials = \velocitynumber} \fourier{\coeffamplificationpoly}_{\indicepolynomials} \polynomialunknown^{\indicepolynomials}$.
                We say that the \fd scheme \Cref{eq:GenericFDScheme} is stable in the \emph{von Neumann} sense if for every $\fourier{\rootamplificationpoly} :  [ -{\pi}/{\spacestep}, {\pi}/{\spacestep}  ]^{\spatialdimensionality} \to \complex$ such that $\fourier{\amplificationpolynomial} (\fourier{\rootamplificationpoly}(\vectorial{\freqvariable})) = \sum_{\indicepolynomials = 0}^{\indicepolynomials = \velocitynumber} \fourier{\coeffamplificationpoly}_{\indicepolynomials}(\vectorial{\freqvariable}) \fourier{\rootamplificationpoly}(\vectorial{\freqvariable})^{\indicepolynomials} = 0$, then
                \begin{enumerate}
                  \item $\lvert \fourier{\rootamplificationpoly}(\vectorial{\freqvariable}) \rvert \leq 1, $ for every $  \vectorial{\freqvariable} \in [ -{\pi}/{\spacestep}, {\pi}/{\spacestep} ]^{\spatialdimensionality}$.
                  \item If $\lvert \fourier{\rootamplificationpoly}(\vectorial{\freqvariable}) \rvert = 1$ for some $  \vectorial{\freqvariable} \in [ -{\pi}/{\spacestep}, {\pi}/{\spacestep} ]^{\spatialdimensionality}$, then $\fourier{\rootamplificationpoly}(\vectorial{\freqvariable})$ is a simple root.
                \end{enumerate}
              \end{definition}
              The conditions by \Cref{def:vonNeumannFD} are necessary and sufficient for stability (Theorem 4.2.1 in \cite{strikwerda2004finite}) if the scheme is explicitly independent of $\spacestep$ and $\timestep$. 
            
              Consider now the \lbm scheme \Cref{eq:SchemeAB} with linear (or linearized) equilibria, that is, there exists $\linearequilvector \in \reals^{\velocitynumber}$ such that $\vectorial{\momentletter}^{\atequilibrium} = \linearequilvector \momentletter_1 = (\linearequilvector \otimes \canonicalbasisvector_1) \vectorial{\momentletter}$.
              Writing the corresponding \fd scheme from \Cref{prop:ReductionFiniteDifferenceGeneral}, we have
              \begin{equation}
                \momentletter_1^{\timevariable + 1}  + \sum_{\indicetimeshift = 0}^{\velocitynumber-1} \coeffcharact_{\indicetimeshift}  \momentletter_1^{\timevariable + 1 - \velocitynumber + \indicetimeshift} - \left (\sum_{\indicetimeshift = 0}^{\velocitynumber - 1} \left ( \sum_{\ell=0}^{\indicetimeshift} \coeffcharact_{\velocitynumber + \ell - \indicetimeshift} \schememoments^{\ell} \schemeequil  \linearequilvector \otimes \canonicalbasisvector_1 \right )_{11} {\momentletter}_1^{\timevariable - \indicetimeshift} \right ) = 0,
              \end{equation}
              where $\charactpolynomial_{\schememoments} = \sum_{\indicepolynomials = 0}^{\indicepolynomials = \velocitynumber} \coeffcharact_{\indicepolynomials} \polynomialunknown^{\indicepolynomials}$.
              Rearranging gives
              \begin{equation}
                \momentletter_1^{\timevariable + 1}  + \sum_{\indicetimeshift = 0}^{\velocitynumber - 1}\left (  \coeffcharact_{\velocitynumber - 1 - \indicetimeshift}  - \left ( \sum_{\ell=0}^{\indicetimeshift} \coeffcharact_{\velocitynumber + \ell - \indicetimeshift} \schememoments^{\ell}  \schemeequil \linearequilvector \otimes \canonicalbasisvector_1 \right )_{11} \right ) \momentletter_1^{\timevariable - \indicetimeshift} = 0,
              \end{equation}
              which is a \fd scheme of the form given in \Cref{eq:GenericFDScheme} (with $\fdvariable = \momentletter_1$) by setting
              \begin{equation}\label{eq:DefinitionCoefficientsAmplification}
                \coeffamplificationpoly_{\indicepolynomials} = 
                \begin{cases}
                  1, \qquad &\text{if} \quad \indicepolynomials = \velocitynumber, \\
                  \coeffcharact_{\indicepolynomials}  -  \left ( \sum_{\ell=0}^{\ell = \velocitynumber - 1 - \indicepolynomials} \coeffcharact_{\indicepolynomials + 1 + \ell} \schememoments^{\ell}  \schemeequil \linearequilvector \otimes \canonicalbasisvector_1\right )_{11}, \qquad &\text{if} \quad \indicepolynomials \in \integerinterval{0}{\velocitynumber - 1}.
                \end{cases}
              \end{equation} 

              \begin{proposition}\label{prop:EquivalenceStability}
                Let $\consmomentsnumber = 1$ and consider the \lbm scheme \Cref{eq:SchemeAB} with linear equilibria, that is, there exists $\linearequilvector \in \reals^{\velocitynumber}$ such that $\vectorial{\momentletter}^{\atequilibrium} = \linearequilvector \momentletter_1 = (\linearequilvector \otimes \canonicalbasisvector_1) \vectorial{\momentletter}$.
                It thus reads $\vectorial{\momentletter}^{\timevariable + 1} = (\schememoments + \schemeequil \linearequilvector \otimes \canonicalbasisvector_1) \vectorial{\momentletter}^{\timevariable}$, where $\schememoments + \schemeequil \linearequilvector \otimes \canonicalbasisvector_1 \in \matrixspace_{\velocitynumber}(\setfinitedifferenceoperators)$.
                Then
                \begin{equation*}
                  \amplificationpolynomial \equiv \charactpolynomial_{\schememoments + \schemeequil \linearequilvector \otimes \canonicalbasisvector_1},
                \end{equation*}
                where $ \amplificationpolynomial \definitionequality \sum_{\indicepolynomials = 0}^{ \velocitynumber} \coeffamplificationpoly_{\indicepolynomials} \polynomialunknown^{\indicepolynomials}$, with $(\coeffamplificationpoly_{\indicepolynomials})_{\indicepolynomials = 0}^{\indicepolynomials = \velocitynumber}$ given by \Cref{eq:DefinitionCoefficientsAmplification}.
              \end{proposition}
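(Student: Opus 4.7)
The plan is to establish the polynomial identity directly by exploiting the rank-one structure of the feedback term $N \definitionequality \schemeequil \linearequilvector \otimes \canonicalbasisvector_1 = (\schemeequil \linearequilvector)\transpose{\canonicalbasisvector_1}$, treating $\tilde{\schememoments} \definitionequality \schememoments + N$ as a rank-one update of $\schememoments$. The key tool I would invoke is the matrix determinant lemma $\determinant(A + u \transpose{v}) = \determinant(A) + \transpose{v}\adjugate(A)u$, which is a polynomial identity in the matrix entries and so transfers from the field setting to any commutative ring, and in particular to $\setfinitedifferenceoperators$ thanks to \Cref{prop:CommutativeRing}.

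First, recalling that \Cref{def:CharacteristicPolynomial} is equivalent to $\charactpolynomial_{\genericmatrix}(\polynomialunknown) = \determinant(\polynomialunknown\matricial{\identity} - \genericmatrix)$, I would apply the matrix determinant lemma to $A = \polynomialunknown\matricial{\identity} - \schememoments$ with the rank-one perturbation $-(\schemeequil\linearequilvector)\transpose{\canonicalbasisvector_1}$, obtaining
\[
\charactpolynomial_{\tilde{\schememoments}}(\polynomialunknown) = \charactpolynomial_{\schememoments}(\polynomialunknown) - \transpose{\canonicalbasisvector_1}\adjugate(\polynomialunknown\matricial{\identity} - \schememoments)\schemeequil\linearequilvector.
\]
Next, I would expand the adjugate in powers of $\polynomialunknown$: writing $\adjugate(\polynomialunknown\matricial{\identity} - \schememoments) = \sum_{k=0}^{\velocitynumber - 1}B_k \polynomialunknown^k$ and matching coefficients in the defining identity $(\polynomialunknown\matricial{\identity} - \schememoments)\adjugate(\polynomialunknown\matricial{\identity} - \schememoments) = \charactpolynomial_{\schememoments}(\polynomialunknown)\matricial{\identity}$, a descending recursion initialised at $B_{\velocitynumber - 1} = \matricial{\identity}$ produces the closed form $B_k = \sum_{\ell = 0}^{\velocitynumber - 1 - k}\coeffcharact_{k + 1 + \ell}\schememoments^{\ell}$. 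The whole computation lives in $\matrixspace_{\velocitynumber}(\setfinitedifferenceoperators)$ and only uses the commutativity of $\setfinitedifferenceoperators$.

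Finally, substituting the adjugate expansion and reading off the $k$-th coefficient yields, for $k \in \integerinterval{0}{\velocitynumber - 1}$,
\[
[\polynomialunknown^k]\charactpolynomial_{\tilde{\schememoments}} = \coeffcharact_k - \transpose{\canonicalbasisvector_1}\left(\sum_{\ell = 0}^{\velocitynumber - 1 - k}\coeffcharact_{k + 1 + \ell}\schememoments^{\ell}\right)\schemeequil\linearequilvector = \coeffcharact_k - \left(\sum_{\ell = 0}^{\velocitynumber - 1 - k}\coeffcharact_{k + 1 + \ell}\schememoments^{\ell}\schemeequil\linearequilvector \otimes \canonicalbasisvector_1\right)_{11},
\]
where the last equality uses that for any $u \in \setfinitedifferenceoperators^{\velocitynumber}$ one has $(u \otimes \canonicalbasisvector_1)_{11} = u_1 = \transpose{\canonicalbasisvector_1}u$. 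This is exactly the expression $\coeffamplificationpoly_k$ prescribed by \Cref{eq:DefinitionCoefficientsAmplification}. The leading coefficient also matches, since both $\charactpolynomial_{\tilde{\schememoments}}$ (by \Cref{thm:CayleyHamilton}) and $\amplificationpolynomial$ (by definition) are monic of degree $\velocitynumber$.

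The only delicate points are keeping the sign conventions coherent with \Cref{def:CharacteristicPolynomial} --- the determinant identities really live on $\determinant(\polynomialunknown\matricial{\identity} - \cdot)$ rather than on $\determinant(\cdot - \polynomialunknown\matricial{\identity})$ --- and justifying the use of the matrix determinant lemma and the adjugate recursion over the commutative ring $\setfinitedifferenceoperators$ instead of a field. Since both are polynomial identities in the entries, they survive any base change, so no deeper obstacle arises and the remainder is bookkeeping.
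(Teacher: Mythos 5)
Your proof is correct and follows essentially the same route as the paper: both apply the rank-one matrix determinant lemma (the paper's Lemma~\ref{lemma:MatrixDeterminant}) to reduce $\charactpolynomial_{\schememoments + \schemeequil\linearequilvector\otimes\canonicalbasisvector_1}$ to $\charactpolynomial_{\schememoments}$ plus an adjugate term, then expand $\adjugate(\schememoments - \polynomialunknown\matricial{\identity})$ in powers of $\polynomialunknown$ and match coefficients against \Cref{eq:DefinitionCoefficientsAmplification}. The only cosmetic difference is that you obtain the closed form of the adjugate coefficients by the descending Horner-type recursion from the identity $(\polynomialunknown\matricial{\identity}-\schememoments)\adjugate(\polynomialunknown\matricial{\identity}-\schememoments)=\charactpolynomial_{\schememoments}\matricial{\identity}$, whereas the paper derives the same expression via Cayley--Hamilton and the telescoping factorization $\genericmatrix^{\indicepolynomials}-\genericmatrixtwo^{\indicepolynomials}=(\genericmatrix-\genericmatrixtwo)(\genericmatrix^{\indicepolynomials-1}+\dots+\genericmatrixtwo^{\indicepolynomials-1})$.
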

              This result -- proved at the end of the section -- states that, under adequate assumptions, the amplification polynomial of the corresponding \fd scheme coincides with the characteristic polynomial associated with the original \lbm scheme.
              \Cref{prop:EquivalenceStability} has also confirmed that assuming the linearity of the equilibria and then performing the computation of the corresponding \fd scheme using the polynomial $\charactpolynomial_{\schememoments + \schemeequil \linearequilvector \otimes \canonicalbasisvector_1}$ yields the same result than performing the computation with $\charactpolynomial_{\schememoments}$ on the possibly non-linear scheme and then considering linear equilibria only at the very end.
              Thus, a similar notion of stability holds for \lbm schemes.
              \begin{definition}[\emph{von Neumann} stability of a \lbm scheme]\label{def:vonNeumannLBM}
                Let $\consmomentsnumber = 1$ and consider the \lbm scheme \Cref{eq:SchemeAB} with linear equilibria.
                It thus reads
                \begin{equation}\label{eq:LinearLBM}
                  \vectorial{\momentletter}^{\timevariable + 1} = (\schememoments + \schemeequil \linearequilvector \otimes \canonicalbasisvector_1) \vectorial{\momentletter}^{\timevariable},
                \end{equation}
                where $\schememoments + \schemeequil \linearequilvector \otimes \canonicalbasisvector_1 \in \matrixspace_{\velocitynumber}(\setfinitedifferenceoperators)$.
                We say that the \lbm scheme \Cref{eq:LinearLBM} is stable in the \emph{von Neumann} sense if for every $\vectorial{\freqvariable} \in  [ -{\pi}/{\spacestep}, {\pi}/{\spacestep} ]^{\spatialdimensionality}$, then every $\fourier{\rootamplificationpoly} \in \spectrum{\fourier{\schememoments}(\vectorial{\freqvariable}) + \fourier{\schemeequil}(\vectorial{\freqvariable}) \linearequilvector\otimes \canonicalbasisvector_1}$ is such that
                \begin{enumerate}
                  \item\label{item:firspoint} $ \lvert \fourier{\rootamplificationpoly} \rvert \leq 1$.
                  \item\label{item:secondpoint} If $ \lvert \fourier{\rootamplificationpoly} \rvert = 1$, then $\fourier{\rootamplificationpoly}$ is a simple eigenvalue of $\fourier{\schememoments}(\vectorial{\freqvariable}) + \fourier{\schemeequil}(\vectorial{\freqvariable}) \linearequilvector\otimes \canonicalbasisvector_1$.
                \end{enumerate}
                Here, $\spectrum{\cdot}$ denotes the spectrum of a matrix.
              \end{definition}
              \Cref{item:firspoint} alone, in \Cref{def:vonNeumannLBM}, coincides with the standard definition of stability for \lbm schemes, see \cite{sterling1996stability}.
              With \Cref{item:secondpoint}, we have been more precise on the subtle question of multiple eigenvalues\footnote{This question is not harmless since for instance the \scheme{1}{2} scheme rewrites as a leap-frog scheme \cite{dellacherie2014construction} if the relaxation parameter is equal to two (see Appendices). This very \fd scheme can suffer from linear growth of the solution due to this issue, see Chapter 4 of \cite{strikwerda2004finite}.} by bringing this definition closer to \Cref{def:vonNeumannFD}.
              Thus, \Cref{prop:EquivalenceStability} has the following Corollary.
              \begin{corollary}
                For $\consmomentsnumber = 1$, the \lbm scheme \Cref{eq:SchemeAB}, rewritten as \Cref{eq:LinearLBM} under linearity assumption on the equilibria, is stable in the \emph{von Neumann} sense according to \Cref{def:vonNeumannLBM} if and only if its corresponding \fd scheme obtained by \Cref{prop:ReductionFiniteDifferenceGeneral} is stable in the \emph{von Neumann} sense according to \Cref{def:vonNeumannFD}.
              \end{corollary}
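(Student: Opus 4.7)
The plan is to combine \Cref{prop:EquivalenceStability} with elementary linear algebra (the identification of roots of the characteristic polynomial with eigenvalues) and the Fourier correspondence \Cref{eq:TransformationCharPolyFourier}. The key observation is that both \Cref{def:vonNeumannFD} and \Cref{def:vonNeumannLBM} are statements about complex numbers, namely roots of a polynomial in one case and eigenvalues of a matrix in the other; one only has to verify that these two sets coincide for each fixed $\vectorial{\freqvariable} \in [-\pi/\spacestep, \pi/\spacestep]^{\spatialdimensionality}$.

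First, I would apply \Cref{prop:EquivalenceStability} to obtain $\amplificationpolynomial \equiv \charactpolynomial_{\schememoments + \schemeequil \linearequilvector \otimes \canonicalbasisvector_1}$ as polynomials in $\setfinitedifferenceoperators[\polynomialunknown]$. Since the Fourier transform of \Cref{eq:TransformationCharPolyFourier} is performed coefficient-wise (both on the polynomial coefficients and on the matrix entries, cf.\ \Cref{lemma:ShiftFourierSpace}), and since characteristic polynomials commute with ring homomorphisms applied entry-wise, one immediately obtains for each frequency $\vectorial{\freqvariable}$ the identity
\begin{equation*}
\fourier{\amplificationpolynomial}(\vectorial{\freqvariable}) = \charactpolynomial_{\fourier{\schememoments}(\vectorial{\freqvariable}) + \fourier{\schemeequil}(\vectorial{\freqvariable}) \linearequilvector \otimes \canonicalbasisvector_1} \in \complex[\polynomialunknown].
\end{equation*}

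Next, for each fixed $\vectorial{\freqvariable}$ this is now a polynomial of degree $\velocitynumber$ over the field $\complex$, whose matrix $\fourier{\schememoments}(\vectorial{\freqvariable}) + \fourier{\schemeequil}(\vectorial{\freqvariable}) \linearequilvector \otimes \canonicalbasisvector_1$ lies in $\matrixspace_{\velocitynumber}(\complex)$. By the standard result that the roots of the characteristic polynomial of a complex matrix are precisely its eigenvalues (with multiplicity equal to algebraic multiplicity), the set of functions $\fourier{\rootamplificationpoly}(\vectorial{\freqvariable})$ appearing in \Cref{def:vonNeumannFD} coincides exactly with $\spectrum{\fourier{\schememoments}(\vectorial{\freqvariable}) + \fourier{\schemeequil}(\vectorial{\freqvariable}) \linearequilvector \otimes \canonicalbasisvector_1}$ of \Cref{def:vonNeumannLBM}. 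Condition (1) in the two definitions is then literally the same. Condition (2) matches as well, since a simple root of the characteristic polynomial is by definition an eigenvalue of algebraic multiplicity one, which is the meaning of ``simple eigenvalue'' intended in \Cref{def:vonNeumannLBM} (and which is the notion relevant for ruling out polynomial growth, cf.\ the footnote after \Cref{def:vonNeumannLBM}).

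The only point requiring care, and thus the mildest potential obstacle, is the terminology for ``simple eigenvalue'' versus ``simple root''. Since the algebraic multiplicity of an eigenvalue equals its multiplicity as a root of $\charactpolynomial$, these notions coincide and no further argument is needed. The proof is therefore essentially a transport through \Cref{prop:EquivalenceStability} and the Fourier correspondence, followed by the classical root-eigenvalue identification, and consists of a few lines.
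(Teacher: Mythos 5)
Your proof is correct and follows exactly the route the paper intends: the corollary is stated as an immediate consequence of \Cref{prop:EquivalenceStability}, and the details you supply (the coefficient-wise Fourier correspondence \Cref{eq:TransformationCharPolyFourier} being a ring homomorphism for each fixed $\vectorial{\freqvariable}$, followed by the identification of roots of $\charactpolynomial_{\fourier{\schememoments}(\vectorial{\freqvariable}) + \fourier{\schemeequil}(\vectorial{\freqvariable}) \linearequilvector \otimes \canonicalbasisvector_1}$ with eigenvalues and of root multiplicity with algebraic multiplicity) are precisely the implicit steps the paper relies on. Nothing is missing.
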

              This result gives a precise and rigorous framework to the widely employed notion of stability \cite{sterling1996stability} for \lbm schemes.

              We finish on the proof of \Cref{prop:EquivalenceStability}.
              We need the following result concerning the determinant of matrices under rank-one updates, see \cite{ding2007eigenvalues} for the proof.
              \begin{lemma}[Matrix determinant]\label{lemma:MatrixDeterminant}
                Let $\genericcommutativering$ be a commutative ring, $\genericmatrix \in \matrixspace_{r}(\genericcommutativering)$ for some $r \in \naturals^{\star}$ and $\vectorial{u}, \vectorial{v} \in \genericcommutativering^r$, then $\determinant(\genericmatrix + \vectorial{u} \otimes \vectorial{v}) = \determinant(\genericmatrix) + \transpose{\vectorial{v}}\adjugate(\genericmatrix) \vectorial{u}$, where $\adjugate(\cdot)$ denotes the adjugate matrix, also known as classical adjoint.
              \end{lemma}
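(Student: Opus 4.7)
The plan is to prove this identity by reducing to the case where $\genericmatrix$ is invertible over a field, via the standard device of introducing formal indeterminates.  Both sides of the claimed equality are polynomial expressions in the entries $\{a_{ij}\}$ of $\genericmatrix$ and the components $\{u_i\}, \{v_j\}$ of $\vectorial{u}, \vectorial{v}$, with integer coefficients.  Therefore, if we prove the equality in the polynomial ring $\mathbb{Z}[a_{ij}, u_i, v_j]_{i,j \in \integerinterval{1}{r}}$, then for any commutative ring $\genericcommutativering$ and any choice of $\genericmatrix, \vectorial{u}, \vectorial{v}$, applying the universal ring homomorphism that substitutes these values for the indeterminates yields the identity in $\genericcommutativering$.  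So the whole problem reduces to establishing the identity in a single ``generic'' situation.

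Inside $\mathbb{Z}[a_{ij}, u_i, v_j]$, which is an integral domain, I would pass to its field of fractions $\mathbb{Q}(a_{ij}, u_i, v_j)$.  In this field, $\determinant(\genericmatrix)$ is a nonzero polynomial, hence a unit, so $\genericmatrix$ is genuinely invertible with $\genericmatrix^{-1} = \adjugate(\genericmatrix)/\determinant(\genericmatrix)$.  I can then write the factorization
\begin{equation*}
  \genericmatrix + \vectorial{u} \otimes \vectorial{v} = \genericmatrix \bigl(\matricial{\identity} + \genericmatrix^{-1} \vectorial{u} \otimes \vectorial{v} \bigr),
\end{equation*}
and use multiplicativity of the determinant to obtain
\begin{equation*}
  \determinant(\genericmatrix + \vectorial{u} \otimes \vectorial{v}) = \determinant(\genericmatrix) \cdot \determinant\bigl(\matricial{\identity} + \genericmatrix^{-1} \vectorial{u} \otimes \vectorial{v} \bigr).
\end{equation*}

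The key auxiliary identity is $\determinant(\matricial{\identity} + \vectorial{x} \otimes \vectorial{y}) = 1 + \transpose{\vectorial{y}} \vectorial{x}$ for any vectors $\vectorial{x}, \vectorial{y}$; this follows at once from the fact that the rank-one matrix $\vectorial{x}\otimes \vectorial{y}$ has the single nonzero eigenvalue $\transpose{\vectorial{y}}\vectorial{x}$ (with eigenvector $\vectorial{x}$), with all other eigenvalues equal to zero, so that $\matricial{\identity} + \vectorial{x}\otimes \vectorial{y}$ has eigenvalues $1 + \transpose{\vectorial{y}}\vectorial{x}$ and $1$ (repeated $r-1$ times).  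Applying it with $\vectorial{x} = \genericmatrix^{-1}\vectorial{u}$ and $\vectorial{y} = \vectorial{v}$, and then multiplying through by $\determinant(\genericmatrix)$, produces
\begin{equation*}
  \determinant(\genericmatrix + \vectorial{u} \otimes \vectorial{v}) = \determinant(\genericmatrix)\bigl(1 + \transpose{\vectorial{v}} \genericmatrix^{-1} \vectorial{u}\bigr) = \determinant(\genericmatrix) + \transpose{\vectorial{v}} \adjugate(\genericmatrix) \vectorial{u},
\end{equation*}
where the second equality uses $\determinant(\genericmatrix) \genericmatrix^{-1} = \adjugate(\genericmatrix)$.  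This is the desired equality, now proved over $\mathbb{Q}(a_{ij}, u_i, v_j)$; but since both sides actually lie in the subring $\mathbb{Z}[a_{ij}, u_i, v_j]$, the identity also holds there, and specialization transports it to any commutative ring $\genericcommutativering$.

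The main delicate point is not the algebraic manipulation itself but the justification that one may legitimately invert $\genericmatrix$ and divide by $\determinant(\genericmatrix)$: this is exactly where the formal-indeterminates trick is crucial, since in the original statement $\genericcommutativering$ need not be a field and $\determinant(\genericmatrix)$ need not be a unit.  A secondary check is the auxiliary lemma $\determinant(\matricial{\identity} + \vectorial{x}\otimes\vectorial{y}) = 1 + \transpose{\vectorial{y}}\vectorial{x}$; should the eigenvalue argument feel insufficiently ring-theoretic, one can give an alternative direct proof by row-expansion, or by the block-matrix identity $\determinant\begin{pmatrix} \matricial{\identity} & -\vectorial{x} \\ \transpose{\vectorial{y}} & 1 \end{pmatrix} = 1 + \transpose{\vectorial{y}}\vectorial{x}$ (computed by expanding along the last row and last column in two different ways), which is itself a polynomial identity and thus extends to arbitrary commutative rings by the same specialization argument.
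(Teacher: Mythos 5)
Your proof is correct. Note that the paper does not actually prove this lemma: it defers entirely to the citation of Ding and Zhou, whose argument is for complex matrices and proceeds by treating the invertible case and extending by continuity/density --- a route that is not directly available over an arbitrary commutative ring such as $\setfinitedifferenceoperators$. Your argument is therefore not only self-contained but better adapted to the setting in which the lemma is actually used: the principle of permanence of polynomial identities (prove the identity for the generic matrix over $\mathbb{Q}(a_{ij},u_i,v_j)$, where $\determinant(\genericmatrix)$ is a unit and the factorization $\genericmatrix + \vectorial{u}\otimes\vectorial{v} = \genericmatrix(\matricial{\identity} + \genericmatrix^{-1}\vectorial{u}\otimes\vectorial{v})$ is legitimate, observe that both sides lie in $\mathbb{Z}[a_{ij},u_i,v_j]$, and specialize along the canonical homomorphism into $\genericcommutativering$) is exactly the standard way to transport such determinant identities to arbitrary commutative rings, and it is the same device the paper implicitly relies on elsewhere (e.g.\ for Cayley--Hamilton over a commutative ring). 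The only mildly informal step is the eigenvalue justification of $\determinant(\matricial{\identity}+\vectorial{x}\otimes\vectorial{y}) = 1 + \transpose{\vectorial{y}}\vectorial{x}$ (one should note that the characteristic polynomial of the rank-$\leq 1$ matrix $\vectorial{x}\otimes\vectorial{y}$ is $\lambda^{r-1}(\lambda - \transpose{\vectorial{y}}\vectorial{x})$, which covers the degenerate case $\transpose{\vectorial{y}}\vectorial{x}=0$), but you anticipate this and supply the block-determinant alternative, which is airtight and itself a polynomial identity. No gap.
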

              We are ready to prove \Cref{prop:EquivalenceStability}.
              \begin{proof}
              Using \Cref{lemma:MatrixDeterminant}, one has
                \begin{align*}
                  \charactpolynomial_{\schememoments +  \schemeequil \linearequilvector \otimes \canonicalbasisvector_1}  :&= (-1)^{\velocitynumber}\determinant (\schememoments +  (\schemeequil\linearequilvector \otimes \canonicalbasisvector_1) - \polynomialunknown \matricial{\identity}), \\
                  &= (-1)^{\velocitynumber} \determinant (\schememoments  - \polynomialunknown \matricial{\identity}) + (-1)^{\velocitynumber} \transpose{\canonicalbasisvector_1} \adjugate(\schememoments - \polynomialunknown \matricial{\identity}) \schemeequil\linearequilvector, \\
                  &= \charactpolynomial_{\schememoments}+ (-1)^{\velocitynumber} \transpose{\canonicalbasisvector_1} \adjugate(\schememoments - \polynomialunknown \matricial{\identity}) \schemeequil\linearequilvector.
                \end{align*}
                By the definition of adjugate and by the Cayley-Hamilton \Cref{thm:CayleyHamilton}, we have
                \begin{align*}
                  (-1)^{\velocitynumber}(\schememoments - \polynomialunknown \matricial{\identity}) \adjugate(\schememoments - \polynomialunknown \matricial{\identity}) &= (-1)^{\velocitynumber}\determinant (\schememoments - \polynomialunknown \matricial{\identity}) \matricial{\identity} = (-1)^{\velocitynumber}\determinant (\schememoments - \polynomialunknown \matricial{\identity}) \matricial{\identity} \hspace{-0.08cm} - \hspace{-0.08cm}\charactpolynomial_{\schememoments} (\schememoments) \\
                  &= -\sum_{\indicepolynomials = 0}^{\velocitynumber} \coeffcharact_{\indicepolynomials} (\schememoments^{\indicepolynomials} - \polynomialunknown^{\indicepolynomials} \matricial{\identity}) = -\sum_{\indicepolynomials = 1}^{\velocitynumber} \coeffcharact_{\indicepolynomials} (\schememoments^{\indicepolynomials} - (\polynomialunknown \matricial{\identity})^{\indicepolynomials}), \\
                  &= -(\schememoments - \polynomialunknown \matricial{\identity}) \sum_{\indicepolynomials=1}^{\velocitynumber} \coeffcharact_{\indicepolynomials} \sum_{\ell = 0}^{\indicepolynomials - 1} \schememoments^{\ell} (\polynomialunknown \matricial{\identity})^{\indicepolynomials - 1 - \ell}, \\
                  &= -(\schememoments - \polynomialunknown \matricial{\identity}) \sum_{\indicepolynomials=1}^{\velocitynumber} \coeffcharact_{\indicepolynomials} \sum_{\ell = 0}^{\indicepolynomials - 1} \schememoments^{\ell} \polynomialunknown^{\indicepolynomials - 1 - \ell},
                \end{align*}
                where we have used that if $\genericmatrix, \genericmatrixtwo \in \matrixspace_{\velocitynumber}(\genericcommutativering)$ on a commutative ring, then $\genericmatrix^{\indicepolynomials} - \genericmatrixtwo^{\indicepolynomials} = (\genericmatrix - \genericmatrixtwo) (\genericmatrix^{\indicepolynomials-1} + \genericmatrix^{\indicepolynomials-2}\genericmatrixtwo + \dots + \genericmatrix\genericmatrixtwo^{\indicepolynomials - 2} + \genericmatrixtwo^{\indicepolynomials-1})$.
                We deduce that 
                \begin{equation}\label{eq:ExpressionOfAdjugate}
                  \adjugate(\schememoments - \polynomialunknown \matricial{\identity}) = - (-1)^{\velocitynumber}\sum_{\indicepolynomials=1}^{\velocitynumber} \coeffcharact_{\indicepolynomials} \sum_{\ell = 0}^{\indicepolynomials - 1} \schememoments^{\ell} \polynomialunknown^{\indicepolynomials - 1 - \ell}.
                \end{equation}
                This yields
                \begin{align*}
                  \charactpolynomial_{\schememoments +  \schemeequil \linearequilvector \otimes \canonicalbasisvector_1} (\polynomialunknown) = \polynomialunknown^{\velocitynumber} + \sum_{k = 0}^{\velocitynumber - 1} \coeffcharact_{\indicepolynomials} \polynomialunknown^{\indicepolynomials} - \transpose{\canonicalbasisvector_1}  \sum_{\indicepolynomials=1}^{\velocitynumber} \coeffcharact_{\indicepolynomials} \sum_{\ell = 0}^{\indicepolynomials - 1} \schememoments^{\ell} \polynomialunknown^{\indicepolynomials - 1 - \ell} \schemeequil\linearequilvector.
                \end{align*}
                Performing the following change of variable $t = \indicepolynomials - 1 - \ell \in \integerinterval{0}{\velocitynumber - 1}$ with $\ell \in \integerinterval{0}{\velocitynumber - 1 - t}$, thus $\indicepolynomials = t + 1 + \ell$, gives
                \begin{align*}
                  \charactpolynomial_{\schememoments +  \schemeequil \linearequilvector \otimes \canonicalbasisvector_1} (\polynomialunknown) &= \polynomialunknown^{\velocitynumber} + \sum_{\indicepolynomials = 0}^{\velocitynumber - 1} \left ( \coeffcharact_{\indicepolynomials}  - \transpose{\canonicalbasisvector_1} \sum_{\ell = 0}^{\velocitynumber - 1 - \indicepolynomials} \schememoments^{\ell} \schemeequil\linearequilvector \coeffcharact_{\indicepolynomials + 1 +\ell} \right ) \polynomialunknown^{\indicepolynomials}, \\
                  &= \polynomialunknown^{\velocitynumber} + \sum_{\indicepolynomials = 0}^{\velocitynumber - 1} \left ( \coeffcharact_{\indicepolynomials}  - \left (\sum_{\ell = 0}^{\velocitynumber - 1 - \indicepolynomials} \coeffcharact_{\indicepolynomials + 1 +\ell}  \schememoments^{\ell} \schemeequil\linearequilvector \otimes \canonicalbasisvector_1 \right )_{11} \right ) \polynomialunknown^{\indicepolynomials}.
                \end{align*}
                Thus we have that  $\amplificationpolynomial \definitionequality \sum_{\indicepolynomials = 0}^{\indicepolynomials = \velocitynumber} \coeffamplificationpoly_{\indicepolynomials} \polynomialunknown^{\indicepolynomials} =  \charactpolynomial_{\schememoments + \schemeequil \linearequilvector \otimes \canonicalbasisvector_1}$.
              \end{proof}
            
              \section{Convergence of \lbm schemes on an example}\label{sec:ConvergenceLBM}
              In this Section, we show on \Cref{ex:D1Q3OneConservedVariable} (taking $p = 1$ to simplify the stability analysis, see \Cref{ex:D1Q3OneConservedVariableP1}) that the theory available for multi-step \fd schemes can be used to study the underlying \lbm scheme.
              The target conservation law is the Cauchy problem
              \begin{equation}\label{eq:TargetPDE}
                \begin{cases}
                  \partial_t u(t, \spacevariable) + \latticevelocity \cfl \partial_x u (t, \spacevariable) = 0, \qquad &(t, \spacevariable) \in [0, T] \times \reals, \\
                  u(t=0, \spacevariable) = u_0(\spacevariable), \qquad &\spacevariable \in \reals.
                \end{cases}
              \end{equation}
              The equilibria are considered to be linear as in \Cref{sec:Stability}: $\momentletter_2^{\atequilibrium} = \latticevelocity \cfl \momentletter_1$ where $\cfl$ is the Courant number and $\momentletter_3^{\atequilibrium} = 2\latticevelocity^2 \foueriernumber \momentletter_1$ with $\foueriernumber$ the Fourier number.
              The corresponding \fd scheme from \Cref{ex:D1Q3OneConservedVariable} and \Cref{ex:D1Q3OneConservedVariableP1} is consistent with 
              \begin{equation}\label{eq:D1Q3NumericalTestEquivalentEquation}
                \partial_t \momentletter_1 + \latticevelocity \cfl \partial_x \momentletter_1 - \latticevelocity \spacestep \left (\frac{1}{\relaxparletter} - \frac{1}{2} \right ) \left (\frac{2}{3} (1 + \foueriernumber) - \cfl^2 \right ) \partial_{xx} \momentletter_1 = \bigO{\spacestep^2}.
              \end{equation}
              In what follows, we shall fix $\cfl = 1/2$.
              One can make the residual diffusion in this equation vanish if $\relaxparletter = 2$, which is a staple of \lbm schemes \cite{dubois2008equivalent, junk2008regular, graille2014approximation, simonis2020relaxation}, or by having $\foueriernumber = 3 \cfl^2 / 2 - 1$.
              We shall analyze both the case $\foueriernumber > 3 \cfl^2 / 2 - 1$, where expect only linear consistency with \Cref{eq:TargetPDE} or -- using the notations from \cite{strikwerda2004finite} -- where the scheme is accurate of order $[r, \rho] = [1, 2]$ and the case $\foueriernumber =  3 \cfl^2 / 2 - 1$, the scheme is second-order consistent with \Cref{eq:TargetPDE} or $[r, \rho] = [2, 3]$ accurate.
            
              \begin{figure}[h]
                \begin{center}
                    \includegraphics[width=.49\textwidth]{./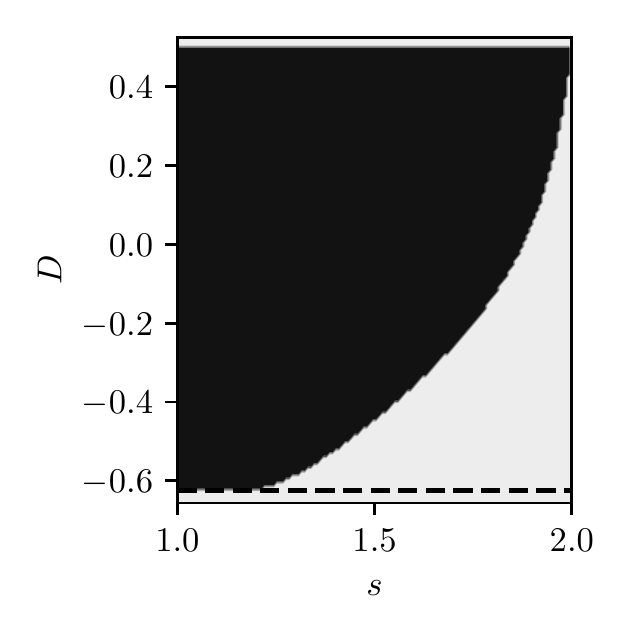}
                    \includegraphics[width=.49\textwidth]{./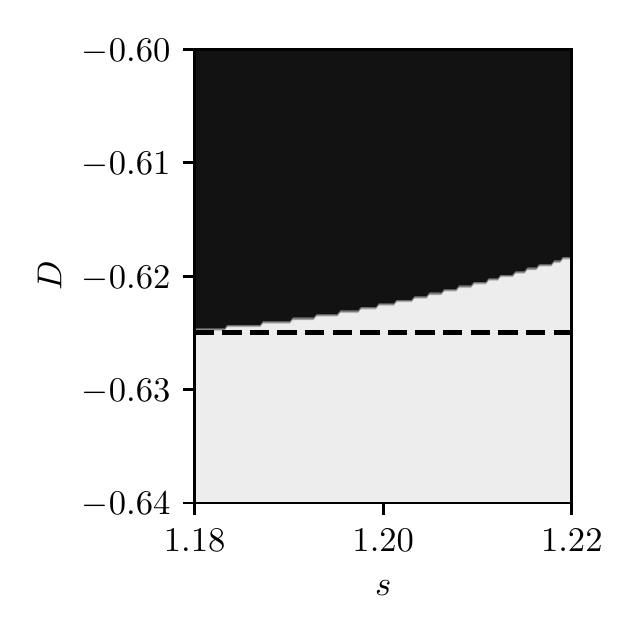}
                \end{center}\caption{\label{fig:stabilityregions}Stability region (in black), obtained numerically, as function of $\relaxparletter$ and $\foueriernumber$ for the \scheme{1}{3} of \Cref{ex:D1Q3OneConservedVariable}, considering $\cfl = 1/2$. The black dashed line corresponds to $\foueriernumber = 3\cfl^2 /2 - 1 = -0.625$, for which the residual diffusivity vanishes, see \Cref{eq:D1Q3NumericalTestEquivalentEquation}. The right image is a magnification of the left one close to $\relaxparletter = 1.2$.}
              \end{figure}
              The numerical \emph{von Neumann} stability analysis has been done and the result is shown in \Cref{fig:stabilityregions}. 
              One sees that enforcing positive residual diffusivity is necessary but not sufficient to obtain stability.
              Using the method from \cite{miller1971location} to locate the zeros of the amplification polynomial, we show the following.
              \begin{proposition}\label{prop:StabilityCharacterizationD1Q3}
                The amplification polynomial of the \fd scheme corresponding to the \scheme{1}{3} scheme from \Cref{ex:D1Q3OneConservedVariableP1} considered in this Section is a simple \emph{von Neumann} polynomial, namely fulfills \Cref{def:vonNeumannFD}, if the following constraints hold.
                  \begin{align*}
                    \frac{3}{2}\cfl^2 &- 1 \leq \foueriernumber \leq \frac{1}{2}, \qquad \text{and} \qquad
                    \max_{\coscomp \in [-1, 1]} \Biggl \{ \relaxparletter^2 \cfl^2 (1 + \coscomp) (1 + \Omega)^2 \\
                    &+\frac{4}{9} (2-\relaxparletter)(\foueriernumber + 1) (1-\Omega) \left ( (2-\relaxparletter)(\foueriernumber + 1) (1-\coscomp) (1 - \Omega) + 3 (\Omega^2- 1)\right )\Biggr \} \leq 0,
                  \end{align*}
                  where $\Omega = \Omega(\coscomp ; \foueriernumber, \relaxparletter) \definitionequality (1 - \relaxparletter) (\coscomp + 2 + 2\foueriernumber(1-\coscomp))/3$.
              \end{proposition}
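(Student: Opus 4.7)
The plan is to derive the amplification polynomial of the corresponding multi-step \fd scheme explicitly, apply Miller's criterion \cite{miller1971location} to reduce simple \emph{von Neumann}-ness to algebraic inequalities on the coefficients, and then identify a key factorisation by $(1-\coscomp)$ that brings them into the form of the statement.

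Starting from \Cref{ex:D1Q3OneConservedVariableP1} with linear equilibria $\momentletter_2^{\atequilibrium} = \latticevelocity \cfl \momentletter_1$ and $\momentletter_3^{\atequilibrium} = 2\latticevelocity^2 \foueriernumber \momentletter_1$, the corresponding \fd scheme for $\momentletter_1$ is two-step. Applying \Cref{lemma:ShiftFourierSpace} and writing $\coscomp = \cos(\spacestep \freqvariable)$, its amplification polynomial takes the form $\fourier{\amplificationpolynomial}(\polynomialunknown; \coscomp) = \polynomialunknown^2 + \alpha(\coscomp)\polynomialunknown + \Omega(\coscomp)$, where $\Omega$ is precisely the expression appearing in the statement and $\alpha = a + \imag b$ with $b = \relaxparletter \cfl \sin(\spacestep \freqvariable)$, so that $b^2 = \relaxparletter^2 \cfl^2 (1-\coscomp)(1+\coscomp)$. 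Miller's reduction produces the linear polynomial $p_1(\polynomialunknown) = (1-\Omega^2)\polynomialunknown + (\alpha - \Omega\conj{\alpha})$, and \cite{miller1971location} shows that $\fourier{\amplificationpolynomial}$ is a simple \emph{von Neumann} polynomial if and only if $|\Omega| \leq 1$ and $|\alpha - \Omega\conj{\alpha}|^2 \leq (1-\Omega^2)^2$ (with the appropriate simplicity clause on the unit circle). Since $\Omega$ is real, the latter expands as $a^2(1-\Omega)^2 + b^2(1+\Omega)^2 \leq (1-\Omega)^2(1+\Omega)^2$.

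The decisive algebraic step is the identity $a + (1+\Omega) = 2(2-\relaxparletter)(\foueriernumber+1)(1-\coscomp)/3$, obtained by direct expansion of $a$ (the real part of $-A_0$) and of $1+\Omega$. Using $a^2 - (1+\Omega)^2 = (a+(1+\Omega))(a-(1+\Omega))$ with $a - (1+\Omega) = (a+(1+\Omega)) - 2(1+\Omega)$, I factor $(1-\coscomp)$ out of the Miller inequality and, after replacing $-2(1+\Omega)(1-\Omega)^2 = 2(1-\Omega)(\Omega^2-1)$, recognise the bracket as the expression $\mathcal{P}(\coscomp; \cfl, \foueriernumber, \relaxparletter)$ displayed in the statement. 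Thus the second Miller condition becomes $(1-\coscomp)\mathcal{P}(\coscomp) \leq 0$, which by continuity on $[-1, 1]$ is equivalent to $\max_{\coscomp \in [-1,1]} \mathcal{P}(\coscomp) \leq 0$. A sanity check on the endpoints justifies the interval $3\cfl^2/2 - 1 \leq \foueriernumber \leq 1/2$: a direct computation gives $\mathcal{P}(1) = 2\relaxparletter^2(2-\relaxparletter)^2(3\cfl^2 - 2\foueriernumber - 2)/3$, forcing $\foueriernumber \geq 3\cfl^2/2 - 1$, while $\mathcal{P}(-1) = 4\relaxparletter(2-\relaxparletter)(\foueriernumber+1)(1-\Omega(-1))^2(2\foueriernumber - 1)/9$, forcing $\foueriernumber \leq 1/2$ given $\relaxparletter \in (0, 2]$. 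The same interval, together with $\relaxparletter \in (0, 2]$, implies $|\Omega(\coscomp)| \leq 1$ for every $\coscomp \in [-1,1]$: since $\Omega$ is affine in $\coscomp$, only $\coscomp = \pm 1$ matters, and the bound follows from $|1-\relaxparletter||1+4\foueriernumber|/3 \leq 1$, so the first Miller condition is also fulfilled.

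The main obstacle is the algebraic identity for $a + (1+\Omega)$ above: without isolating this $(1-\coscomp)$ factor, the Miller inequality cannot be reorganised into the symmetric form of the statement, and the proof would produce a more opaque condition. A secondary technical point is the degenerate case in Miller's theorem where $|\Omega(\coscomp)| = 1$ (occurring at $\relaxparletter \in \{0, 2\}$ or at isolated values of $\coscomp$): there the generic reduction breaks down and one must verify directly that the corresponding unit-circle root of $\fourier{\amplificationpolynomial}$ remains simple.
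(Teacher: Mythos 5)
Your proposal is correct and follows exactly the route the paper indicates: the paper gives no written-out proof of this proposition beyond citing Miller's root-location method, and your computation — deriving the quadratic amplification polynomial with constant term $\Omega$, performing the Schur--Cohn/Miller reduction to $(1-\Omega^2)\polynomialunknown + (\alpha - \Omega\conj{\alpha})$, and factoring $(1-\coscomp)$ out via the identity $a + (1+\Omega) = 2(2-\relaxparletter)(\foueriernumber+1)(1-\coscomp)/3$ — reproduces the stated conditions correctly (I verified the identity, the bracket rearrangement via $(1-\Omega)(\Omega^2-1) = -(1-\Omega)^2(1+\Omega)$, and the endpoint values $\mathcal{P}(\pm 1)$). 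Your closing caveats about the degenerate case $\lvert\Omega\rvert = 1$ and the role of the first inequality as a necessary condition are consistent with the paper's own remarks following the proposition.
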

              The first inequality from this Proposition gives only a necessary condition selecting a rectangle in the $(\relaxparletter, \foueriernumber)$ plane.
              The second one provides a sufficient condition yielding the non-straightforward profile visible on \Cref{fig:stabilityregions}.
              This comes from the fact that the maximum can be reached either on the boundary of $[-1, 1]$ (for $s \leq 1.18$ approximately) yielding the flat profile close to $s = 1$, or inside this compact (for $s > 1.18$), giving the tightening shape as $s$ increases towards $s = 2$.

              Using the generalization of Theorem 10.1.4 from \cite{strikwerda2004finite} to multi-step schemes for regular data and that of Corollary 10.3.2 for non-smooth data, one obtains the following convergence result for the \lbm scheme.
              \begin{proposition}[Convergence of the \scheme{1}{3} scheme]\label{prop:ConvergenceD1Q3}
                Consider the \scheme{1}{3} linear scheme of \Cref{ex:D1Q3OneConservedVariable} presented in this Section with a choice of $(\cfl, \foueriernumber, \relaxparletter)$ rendering a stable scheme according to \Cref{def:vonNeumannLBM}, as discussed in \Cref{prop:StabilityCharacterizationD1Q3}.
                The scheme is initialized with the point values of $u_0$ and at equilibrium.
                Then
                \begin{itemize}
                  \item For $\foueriernumber > 3 \cfl^2 / 2 - 1$, namely the corresponding \fd scheme is accurate of order $[r, \rho] = [1, 2]$.
                  \begin{itemize}
                    \item If $u_0 \in H^2$, the convergence of the \lbm scheme is linear:
                    \begin{equation*}
                        \lVert \evaluationOperator u(t^{\timevariable}, \cdot ) - \momentletter_1^{\timevariable} \rVert_{\ell^2, \spacestep} \leq C \spacestep \lVert u_0 \rVert_{H^2}, \qquad \timevariable \in \integerinterval{0}{[T/\timestep]},
                    \end{equation*}
                    where $\evaluationOperator$ is the evaluation operator such that $\evaluationOperator u : \lattice \to \reals$ with $(\evaluationOperator u)(\spacevariable) = u(\spacevariable)$ for every $\spacevariable \in \lattice$.
                    \item If $u_0 \in H^{\sigma}$ for any $\sigma < \sigma_0 < 2$ and there exists a constant $C(u_0)$ such that $\lVert u_0 \rVert_{H^{\sigma}} \leq C(u_0) / \sqrt{\sigma - \sigma_0}$, then
                    \begin{equation*}
                      \lVert \evaluationOperator u(t^{\timevariable}, \cdot ) - \momentletter_1^{\timevariable} \rVert_{\ell^2, \spacestep} \leq C \spacestep^{\sigma_0 / 2} \sqrt{\lvert \text{ln}(\spacestep)\rvert} C(u_0), \quad \timevariable \in \integerinterval{0}{[T/\timestep]}.\footnote{The logarithmic term is rarely observed in simulations.}
                  \end{equation*}
                  \end{itemize}
                  \item For $\foueriernumber = 3 \cfl^2 / 2 - 1$, namely the corresponding \fd scheme is accurate of order $[r, \rho] = [2, 3]$.
                  \begin{itemize}
                    \item If $u_0 \in H^3$, the convergence of the \lbm scheme is quadratic:
                    \begin{equation*}
                        \lVert \evaluationOperator u(t^{\timevariable}, \cdot ) - \momentletter_1^{\timevariable} \rVert_{\ell^2, \spacestep} \leq C \spacestep^2 \lVert u_0 \rVert_{H^3}, \qquad \timevariable \in \integerinterval{0}{[T/\timestep]}.
                    \end{equation*}
                    \item If $u_0 \in H^{\sigma}$ for any $\sigma < \sigma_0 < 3$ and there exists a constant $C(u_0)$ such that $\lVert u_0 \rVert_{H^{\sigma}} \leq C(u_0) / \sqrt{\sigma - \sigma_0}$, then
                    \begin{equation*}
                      \lVert \evaluationOperator u(t^{\timevariable}, \cdot ) - \momentletter_1^{\timevariable} \rVert_{\ell^2, \spacestep} \leq C \spacestep^{2 \sigma_0 / 3} \sqrt{\lvert \text{ln}(\spacestep) \rvert} C(u_0), \quad \timevariable \in \integerinterval{0}{[T/\timestep]}.
                  \end{equation*}
                  \end{itemize}
                \end{itemize}
                The constants $C$ have the following dependencies: $C = C(T, \cfl, \foueriernumber, \relaxparletter)$.
              \end{proposition}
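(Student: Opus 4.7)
The plan is to leverage the equivalence between the lattice Boltzmann scheme and its corresponding multi-step finite difference scheme established in \Cref{prop:ReductionFiniteDifferenceGeneral}, together with the equivalence of stability notions proved in \Cref{prop:EquivalenceStability} and its Corollary, so as to reduce the statement to a purely finite difference convergence theorem to which one can apply the generalizations of Theorems 10.1.4 and 10.3.2 from \cite{strikwerda2004finite}.

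First, I would invoke \Cref{prop:ReductionFiniteDifferenceGeneral} (in its reduced form of \Cref{ex:D1Q3OneConservedVariableP1}, since $p=1$) to write the evolution of $\momentletter_1^{\timevariable}$ as a two-step explicit linear \fd scheme. Next, the equilibria being linear, I would appeal to \Cref{prop:EquivalenceStability} to identify the amplification polynomial of this \fd scheme with $\charactpolynomial_{\schememoments + \schemeequil \linearequilvector \otimes \canonicalbasisvector_1}$; the hypothesis that the triple $(\cfl, \foueriernumber, \relaxparletter)$ lies in the stability region characterized by \Cref{prop:StabilityCharacterizationD1Q3} then yields that this polynomial is a simple von Neumann polynomial in the sense of \Cref{def:vonNeumannFD}. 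The consistency of the \fd scheme with \Cref{eq:TargetPDE} at the announced accuracy orders $[r,\rho]=[1,2]$ or $[2,3]$ is read directly from the modified equation \Cref{eq:D1Q3NumericalTestEquivalentEquation}, distinguishing according to whether the $\partial_{xx}$ coefficient vanishes or not.

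Having stability and consistency of a multi-step explicit \fd scheme in the von Neumann sense, the generalization of Theorem 10.1.4 from \cite{strikwerda2004finite} (Lax-Richtmyer) applies in the $\ellpspace{2}$ framework and delivers the $H^{r+1}$ estimates of the first item in each case, while the generalization of Corollary 10.3.2 provides the fractional-regularity estimates with the $\sqrt{|\ln \spacestep|}$ factor in the second items, by interpolation between two consecutive integer-regularity bounds of Lax-Richtmyer type. The constants depend on the CFL number, on $\foueriernumber$ and on $\relaxparletter$ only through the operator norm of the powers of the Fourier symbol up to the final time, hence the dependency $C=C(T,\cfl,\foueriernumber,\relaxparletter)$.

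The delicate point in carrying out this plan is the treatment of the initialization. The \fd scheme is multi-step, whereas the \lbm scheme has a single-time-step initial datum: one must verify that the auxiliary initial values $\momentletter_1^{1}$ (and, for the $[r,\rho]=[2,3]$ case, higher order compatibility) produced by a single application of the \lbm collide-and-stream starting from equilibrated data with point values of $u_0$ are themselves accurate enough with respect to the solution of \Cref{eq:TargetPDE} so as not to degrade the global rate. Because the one-step \lbm map agrees with the exact solution up to the same truncation order as the multi-step rewrite, and because initial errors are propagated only through bounded powers of the amplification matrix under the von Neumann condition, this initial layer contributes $\bigO{\spacestep^{r}}$ (respectively $\bigO{\spacestep^{2r/\rho}}$ by interpolation), which is absorbed in the announced bound. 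This is essentially the only part of the argument that requires genuine checking; the rest is the direct transposition to our setting of the classical theorems cited above.
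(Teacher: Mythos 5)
Your proposal is correct and follows essentially the same route as the paper, which likewise reduces the statement to the corresponding \fd scheme via \Cref{prop:ReductionFiniteDifferenceGeneral}, uses the stability characterization of \Cref{prop:StabilityCharacterizationD1Q3} and the consistency orders read off \Cref{eq:D1Q3NumericalTestEquivalentEquation}, and then invokes the generalizations of Theorem 10.1.4 and Corollary 10.3.2 of \cite{strikwerda2004finite}. Your explicit discussion of the initialization layer for the multi-step scheme is a point the paper passes over in silence, and it is indeed the one step that genuinely needs checking.
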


              We now corroborate these results with numerical simulations, which are carried, for the sake of the numerical implementation, on the bounded domain $[-1, 1]$ enforcing periodic boundary conditions. 
              The final simulation time is $T = 1/2$ and $\latticevelocity = 1$.
              We stress the fact that we employ the \lbm scheme and not its corresponding \fd scheme.
              Guided by the considerations from \Cref{prop:ConvergenceD1Q3} in terms of regularity, we take different initial functions with various smoothness, inspired by \cite{strikwerda2004finite}.
              \begin{align*}
                  \text{(a)} \qquad u_0 (x) &= \chi_{\lvert x \rvert \leq 1/2}(x) \in H^{\sigma}, \quad \text{for any} \quad \sigma < \sigma_0 = 1/2.\\
                  \text{(b)} \qquad u_0 (x) &= (1-2\lvert x \rvert)\chi_{\lvert x \rvert \leq 1/2}(x)  \in H^{\sigma}, \quad \text{for any} \quad \sigma < \sigma_0 = 3/2.\\
                  \text{(c)} \qquad u_0 (x) &= \cos^2{(\pi x)}\chi_{\lvert x \rvert \leq 1/2}(x) \in H^{\sigma}, \quad \text{for any} \quad \sigma < \sigma_0 = 5/2.\\
                  \text{(d)} \qquad u_0 (x) &= \text{exp}\left (-1/{(1- \lvert 2x \rvert^2)} \right )\chi_{\lvert x \rvert \leq 1/2}(x) \in C_{c}^{\infty}.
              \end{align*}
              
              \begin{figure}[h]
                \begin{center}
                    \includegraphics[width=1.\textwidth]{./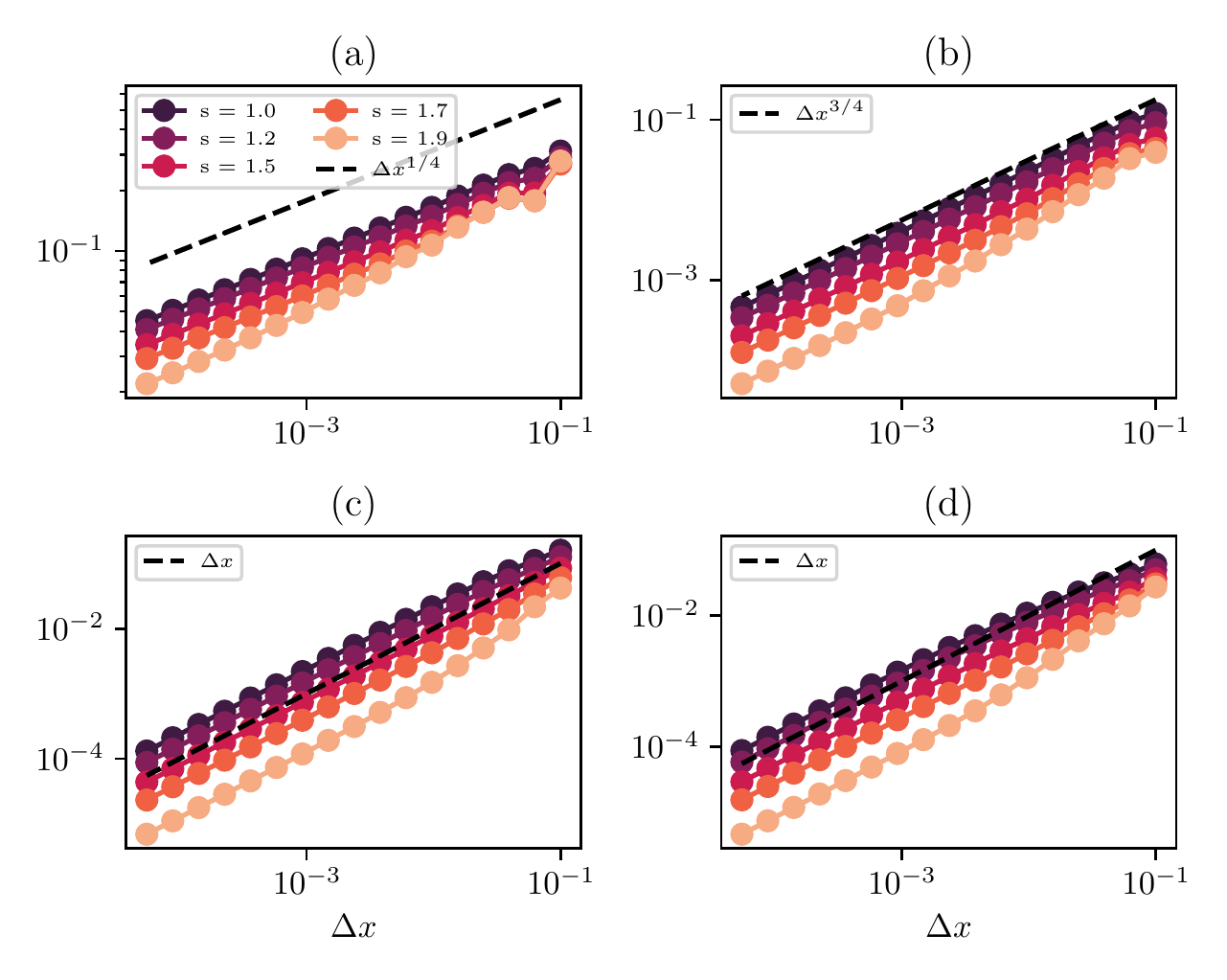}
                \end{center}\caption{\label{fig:convergence_I_order}$\foueriernumber = 0.4$. $\ell^2$ error at final time $T$ between the solution (conserved moment) of \lbm scheme and the exact solution, for different initial data (a), (b), (c) and (d) and different relaxation parameters $\relaxparletter$.}
              \end{figure}
              
              \begin{figure}[h]
                \begin{center}
                    \includegraphics[width=1.\textwidth]{./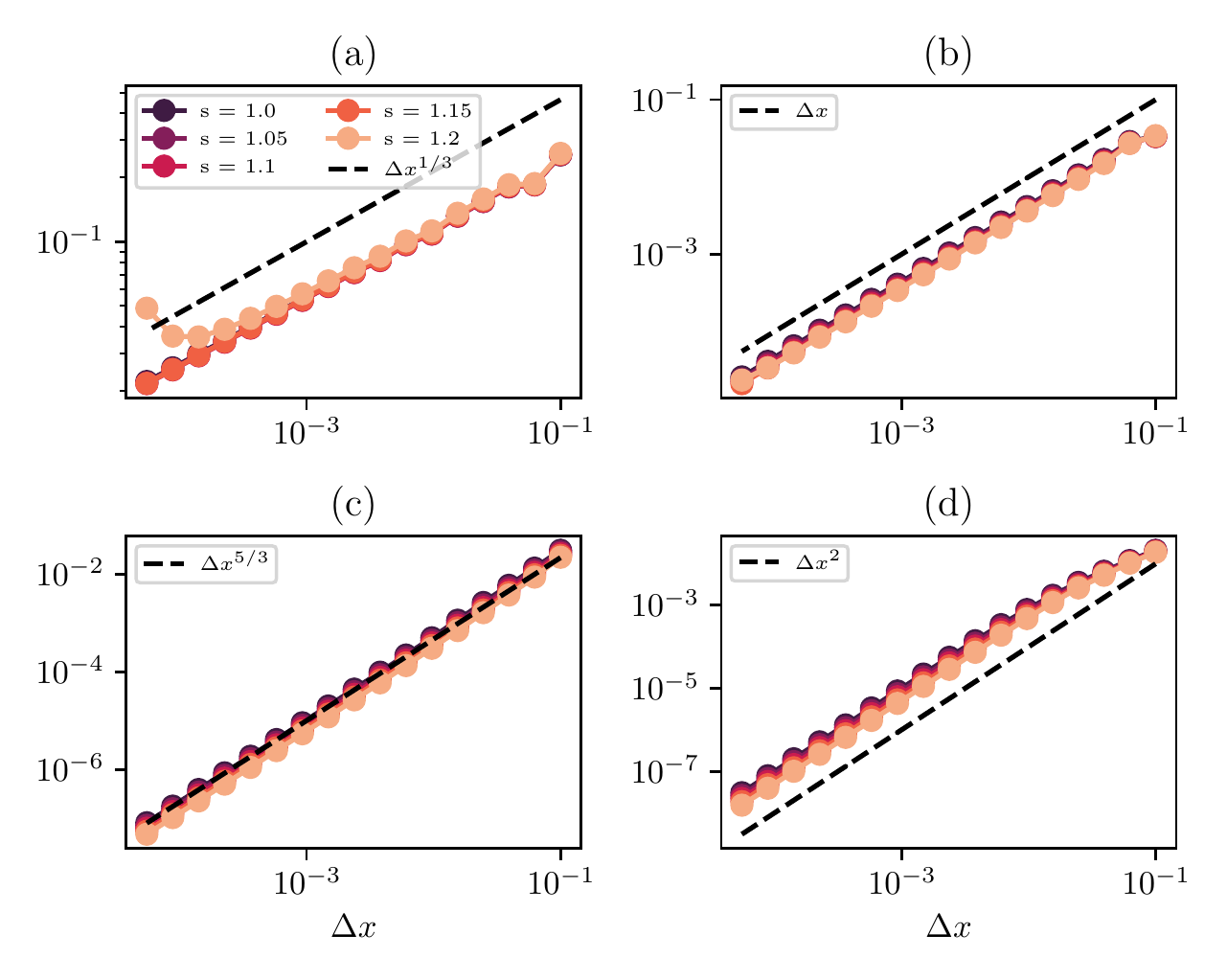}
                \end{center}\caption{\label{fig:convergence_II_order}$\foueriernumber = -0.625$. $\ell^2$ error at final time $T$ between the solution (conserved moment) of \lbm scheme and the exact solution, for different initial data (a), (b), (c) and (d) and different relaxation parameters $\relaxparletter$.}
              \end{figure}
              
              The numerical convergence for the case $\foueriernumber = 0.4$ is given on \Cref{fig:convergence_I_order}. 
              According to \Cref{fig:stabilityregions} and \Cref{prop:StabilityCharacterizationD1Q3}, we expect stability for every choice of $\relaxparletter$. Thus, the empirical convergence rates are in excellent agreement with \Cref{prop:ConvergenceD1Q3}. 
              The error constant is smaller for larger $\relaxparletter$, since for this choice, less numerical diffusion is present.
              
              Concerning the case $\foueriernumber = -0.625$ presented on \Cref{fig:convergence_II_order}, we had to utilize relaxation parameters $\relaxparletter$ close to one in order to remain in the stability region as prescribed by \Cref{fig:stabilityregions} and \Cref{prop:StabilityCharacterizationD1Q3}. As far as the scheme stays stable, for $\relaxparletter \leq 1.15$, we observe the expected convergence rates according to \Cref{prop:ConvergenceD1Q3}.
              Nevertheless, looking at the right image in \Cref{fig:stabilityregions}, we see that $\relaxparletter = 1.2$ is not in the stability region.
              This is why we observe, in (a) from \Cref{fig:convergence_II_order}, thus for the less smooth solution, that the scheme is not convergent.
              The instability originates from high-frequency modes which are abundant in the test case (a).
              This is the empirical evidence that the Lax-Richtmyer theorem \cite{lax1956survey} holds for \lbm schemes: an unstable scheme cannot be convergent.
            
            \section{Conclusions}\label{sec:Conclusions}
              In this paper, we proved that any \lbm scheme corresponds to a multi-step \fd scheme on the conserved moments, using a simple yet crucial result of linear algebra.
              This showed that \lbm schemes, in all their richness, fall in the framework of this latter category of well-known numerical schemes.
              Moreover, for linear problems and one conserved moment, we proved that the usual notion of stability employed for \lbm schemes is relevant, since it corresponds to the \emph{von Neumann} stability analysis for the \fd schemes.
              Therefore, the Lax-Richtmyer theorem \cite{lax1956survey,strikwerda2004finite}, stipulating that consistency and stability are the necessary and sufficient conditions for the convergence of linear \fd schemes, also holds for the \lbm schemes.
            
              A question left unanswered in this work, being the object of current investigations, concerns the link between the consistency for the corresponding \fd scheme and the theory of equivalent equations by \cite{dubois2008equivalent,dubois2019nonlinear}.
              In a complementary work \cite{bellotti2021equivalentequations}, we have proved that the two notions are equivalent up to second-order.
              The conjecture is that this holds for higher orders.
              The difficulty lies in the fact that performing \emph{a priori} Taylor expansions on the coefficients of the characteristic polynomial of $\schememoments$ is generally a hard task, due to their intrinsic non-linear dependence on $\schememoments$. Furthermore, the multi-step nature of the corresponding \fd scheme is an additional toil.

              \section*{Acknowledgments}
              The authors deeply thank L. Gouarin for the help in the implementation of the symbolic computations needed to check the provided examples.
              T. Bellotti friendly thanks his fellow PhD candidates C. Houpert, Y. Le Calvez, A. Louvet, M. Piquerez and D. Stantejsky for the useful discussions on algebra.
              This author is supported by a PhD funding (year 2019) from the Ecole polytechnique.

\bibliographystyle{acm}
\bibliography{biblio}

\section*{Appendices}
\subsection*{Proof of \Cref{prop:ReductionFiniteDifferenceAnnhilationFirstRowOnly}}
\begin{proof}
  By the choice of polynomial, we have that
  \begin{equation*}
    \left (\sum_{\indicepolynomials = 0}^{\degree(\minimalannpolynomial_{\schememoments})} \coeffannminimal_{\indicepolynomials} \schememoments^{\indicepolynomials} \right )_{1\cdot} = \left ( \coeffannminimal_0 + \sum_{\indicepolynomials = 1}^{\degree(\minimalannpolynomial_{\schememoments})} \coeffannminimal_{\indicepolynomials} (\schememoments^k)_{11}, 0, \dots, 0 \right ).
  \end{equation*}
  Restarting from the proof of \Cref{prop:ReductionFiniteDifferenceGeneral}, we have
  \begin{align*}
    \sum_{\indicetimeshift = 0}^{\degree(\minimalannpolynomial_{\schememoments})} \coeffannminimal_{\indicetimeshift} {\momentletter}_1^{\tilde{\timevariable} + \indicetimeshift} &= {\momentletter}_1^{\tilde{\timevariable} + \degree(\minimalannpolynomial_{\schememoments})} + \sum_{\indicetimeshift = 1}^{\degree(\minimalannpolynomial_{\schememoments}) - 1} \coeffannminimal_{\indicetimeshift} {\momentletter}_1^{\tilde{\timevariable} + \indicetimeshift} + \coeffannminimal_0 {\momentletter}_1^{\tilde{\timevariable}}, \\
    &= \left ( \left ( \sum_{\indicetimeshift = 0}^{\degree(\minimalannpolynomial_{\schememoments})} \coeffannminimal_{\indicetimeshift}  \schememoments^{\indicetimeshift} \right ) \vectorial{\momentletter}^{\tilde{\timevariable}} \right )_1 + \sum_{\indicetimeshift = 1}^{\degree(\minimalannpolynomial_{\schememoments})} \coeffannminimal_{\indicetimeshift}  \left ( \sum_{\ell = 0}^{\indicetimeshift-1} \schememoments^{\ell} \schemeequil  \vectorial{\momentletter}^{\atequilibrium}|^{\tilde{\timevariable} + \indicetimeshift - 1 - \ell} \right )_1, \\
    &=  \coeffannminimal_0 \momentletter^{\tilde{\timevariable}} + \left ( \sum_{\indicetimeshift = 1}^{\degree(\minimalannpolynomial_{\schememoments})} \coeffannminimal_{\indicetimeshift} (\schememoments^{\indicetimeshift})_{11} \right ) \momentletter_1^{\tilde{\timevariable}}  \\
    &+ \sum_{\indicetimeshift = 1}^{\degree(\minimalannpolynomial_{\schememoments})} \coeffannminimal_{\indicetimeshift}  \left ( \sum_{\ell = 0}^{\indicetimeshift-1} \schememoments^{\ell} \schemeequil  \vectorial{\momentletter}^{\atequilibrium}|^{\tilde{\timevariable} + \indicetimeshift - 1 - \ell} \right )_1,
  \end{align*}
  therefore
  \begin{align}
    {\momentletter}_1^{\tilde{\timevariable} + \degree(\minimalannpolynomial_{\schememoments})} = - \sum_{\indicetimeshift = 1}^{\degree(\minimalannpolynomial_{\schememoments}) - 1} \coeffannminimal_{\indicetimeshift} {\momentletter}_1^{\tilde{\timevariable} + \indicetimeshift} &+ \left ( \sum_{\indicetimeshift = 1}^{\degree(\minimalannpolynomial_{\schememoments})} \coeffannminimal_{\indicetimeshift} (\schememoments^{\indicetimeshift})_{11} \right ) \momentletter_1^{\tilde{\timevariable}}  \nonumber \\
    &+ \sum_{\indicetimeshift = 1}^{\degree(\minimalannpolynomial_{\schememoments})} \coeffannminimal_{\indicetimeshift}  \left ( \sum_{\ell = 0}^{\indicetimeshift-1} \schememoments^{\ell} \schemeequil  \vectorial{\momentletter}^{\atequilibrium}|^{\tilde{\timevariable} + \indicetimeshift - 1 - \ell} \right )_1. \label{eq:tmp3}
  \end{align}
  Performing the usual change of variable yields the result.
\end{proof}

\subsection*{Proof of \Cref{lemma:divisibility}}
  \begin{proof}
    The proof goes like the standard one of Lemma \ref{lemma:MinimalVsCharacteristic}.
    Consider $\minimalpolynomial_{\schememoments} = \polynomialunknown^{\degree(\minimalpolynomial_{\schememoments})} + \coeffminimal_{\degree(\minimalpolynomial_{\schememoments})-1}\polynomialunknown^{\degree(\minimalpolynomial_{\schememoments})-1} + \dots + \coeffminimal_1 \polynomialunknown + \coeffminimal_0$.
    Consider the Euclidian division between $\minimalpolynomial_{\schememoments}$ and $\minimalannrowpolynomial_{\schememoments}$: there exist $Q, R \in \setfinitedifferenceoperators[\polynomialunknown]$ such that
    \begin{equation*}
        \minimalpolynomial_{\schememoments} = \minimalannrowpolynomial_{\schememoments}Q + R, 
    \end{equation*}
    with either $0 < \degree(R) < \degree(\minimalannrowpolynomial_{\schememoments})$ or $\degree(R) =0$ (constant reminder polynomial).
    Let us indeed write
    \begin{align*}
        Q &= q_{\degree(\minimalpolynomial_{\schememoments}) - \degree(\minimalannrowpolynomial_{\schememoments})} \polynomialunknown^{\degree(\minimalpolynomial_{\schememoments}) - \degree(\minimalannrowpolynomial_{\schememoments})} + \dots + q_1 \polynomialunknown + q_0, \\
        R&= r_{\degree(R)} \polynomialunknown^{\degree(R)} + \dots + r_1 \polynomialunknown + r_0, \\
    \end{align*}
    Suppose that $R \not \equiv 0$, then we have for every $\indicescolumns \in \integerinterval{1}{\velocitynumber}$
    \begin{gather*}
        \overbrace{(\schememoments^{\degree(\minimalpolynomial_{\schememoments})})_{1\indicescolumns} + \coeffminimal_{\degree(\minimalpolynomial_{\schememoments})-1}(\schememoments^{\degree(\minimalpolynomial_{\schememoments})-1})_{1\indicescolumns} + \dots + \coeffminimal_1 (\schememoments)_{1\indicescolumns} + \coeffminimal_0 \delta_{1\indicescolumns}}^{=0} \\
        =  r_{\degree(R)} (\schememoments^{\degree(R})_{1\indicescolumns} + \dots + r_1 (\schememoments)_{1\indicescolumns} + r_0\delta_{1\indicescolumns} + \\
        \underbrace{\left ( (\schememoments^{\degree(\minimalannrowpolynomial_{\schememoments})})_{1\indicescolumns} + \coeffannminimal_{\degree(\minimalannrowpolynomial_{\schememoments})-1} (\schememoments^{\degree(\minimalannrowpolynomial_{\schememoments})-1})_{1\indicescolumns} + \dots + \coeffannminimal_1 (\schememoments)_{1\indicescolumns} + \coeffannminimal_0 \delta_{1\indicescolumns}\right )}_{= 0} \\
        \times \left ( q_{\degree(\minimalpolynomial_{\schememoments}) - \degree(\minimalannpolynomial_{\schememoments})} (\schememoments^{\degree(\minimalpolynomial_{\schememoments}) - \degree(\minimalannpolynomial_{\schememoments})})_{1\indicescolumns} + \dots + q_1 (\schememoments)_{1\indicescolumns}+ q_0 \delta_{1\indicescolumns}\right ),
    \end{gather*}
    thus
    \begin{equation*}
     r_{\degree(R)} (\schememoments^{\degree(R})_{1\indicescolumns} + \dots + r_1 (\schememoments)_{1\indicescolumns} + r_0 \delta_{1\indicescolumns} = 0, \qquad \indicescolumns \in \integerinterval{1}{\velocitynumber},
    \end{equation*}
    with $0 <\degree(R) < \degree(\minimalannrowpolynomial_{\schememoments})$, which contradicts the minimality of $\minimalannrowpolynomial_{\schememoments}$. Thus necessarily $\degree(R) = 0$ so the polynomial is constant, but to have the previous property, the constant must be zero, thus $R \equiv 0$.
  \end{proof}

\subsection*{Additional examples}

In this section, we gather more examples concerning the application of our theory to \lbm schemes which can be found in the literature.

\subsubsection*{\scheme{1}{2} with one conservation law}

Consider the scheme by \cite{dellacherie2014construction, graille2014approximation} taking $\spatialdimensionality = 1$ and $\velocitynumber = 2$ with $\normalizedvelocityletter_1 = 1$ and $\normalizedvelocityletter_2 = -1$ and
\begin{equation}\label{eq:D1Q2OneConservedMoment}
  \momentsmatrix = \left ( \begin{matrix}
                           1 & 1 \\
                           \latticevelocity & - \latticevelocity
                          \end{matrix} \right ), \qquad
  \relaxationmatrix = \diagmatrix(0, s), \quad \text{with} \quad s \neq 1.
\end{equation}
The scheme can be used to simulate a non-linear scalar conservation law (advection, Burgers, \emph{etc.}) using an acoustic scaling and a non-linear diffusion equation with a parabolic scaling.
However, the scheme is not rich enough to simulate more complex equations.
As already pointed out in the introduction , the \fd equivalent of this scheme has already been studied by \cite{dellacherie2014construction} in the case where the equilibria are linear functions.

It can be easily seen, even by hand since dealing with a $2\times 2$ matrix, that
\begin{equation*}
  \charactpolynomial_{\schememoments} = \polynomialunknown^2 - \frac{1}{2}(2-s)(\basicx + \conj{\basicx}) \polynomialunknown + (1-s).
\end{equation*}
The minimal polynomial coincides with the characteristic polynomial. This can be seen, as usual, by trying to consider $\alpha_0$ and $\alpha_1$ such that
\begin{equation*}
  \alpha_0 \matricial{\identity} + \alpha_1 \schememoments = 
  \begin{pmatrix}
      \alpha_0 + \frac{(\basicx + \conj{\basicx})}{2}\alpha_1 & \frac{(1-s)(\basicx - \conj{\basicx})}{2\latticevelocity}\alpha_1 \\ 
      \frac{\latticevelocity(\basicx - \conj{\basicx})}{2}\alpha_1 & \alpha_0 + \frac{(1-s)(\basicx + \conj{\basicx})}{2}\alpha_1
  \end{pmatrix} = 
  \begin{pmatrix}
   0 & 0 \\
   0 & 0
  \end{pmatrix}.
\end{equation*}
The only way of annihilating the first entry is to take $\alpha_0 = 0$, which is trivial.
Thus the minimal polynomial is of degree $2$ and then coincides with the characteristic polynomial.
The equivalent \fd scheme is 
  \begin{equation*}
      \momentletter_1^{\timevariable + 1} = \frac{1}{2} (2-s)(\basicx + \conj{\basicx})\momentletter_1^{\timevariable} - (1-s)\momentletter_1^{\timevariable - 1} +  \frac{s(\basicx - \conj{\basicx})}{2\latticevelocity} \momentletter_2^{\atequilibrium}|^{\timevariable}.
  \end{equation*}
  The scheme is a $\theta$-scheme between a Lax-Friedrichs scheme (for $s = 1$) and a leap-frog scheme (for $s = 2$).

\subsubsection*{\scheme{1}{3} SRT for one conservation law}

  Consider the \scheme{1}{3} SRT scheme by \cite{fuvcik2021equivalent}, also corresponding to that of \cite{suga2010accurate} which reads with our notations $\spatialdimensionality = 1$, $\velocitynumber = 3$ and $\normalizedvelocityletter_1 = 0$, $\normalizedvelocityletter_2 = 1$ and $\normalizedvelocityletter_3 = -1$ and 
  \begin{equation*}
    \momentsmatrix = 
    \begin{pmatrix}
      1 & 1 & 1 \\
      0 & \latticevelocity & -\latticevelocity \\
      0 & \latticevelocity^2 & \latticevelocity^2
    \end{pmatrix}, \qquad
    \relaxationmatrix = \diagmatrix(0, \omega, \omega), \quad \text{with} \quad \omega \neq 1,
  \end{equation*}
  The characteristic polynomial, corresponding to the minimal polynomial is
  \begin{equation*} 
    \charactpolynomial_{\schememoments} = \polynomialunknown^3 + (\omega(\basicx + \conj{\basicx}) - (\basicx + 1 +\conj{\basicx})) \polynomialunknown^2 + (1-\omega) ((\basicx + \conj{\basicx}) + (1-\omega)) \polynomialunknown - (1-\omega)^2.
  \end{equation*}
  Hence the equivalent \fd scheme is 
  \begin{align*}
    \momentletter_1^{\timevariable + 1} &= (1-\omega)(\basicx + \conj{\basicx}) \momentletter_1^{\timevariable} + \momentletter_1^{\timevariable} -  (1-\omega)(\basicx + \conj{\basicx}) \momentletter_1^{\timevariable - 1} - (1-\omega)^2 \momentletter_1^{\timevariable - 1} \\
    &+ (1-\omega)^2 \momentletter_1^{\timevariable - 2} + \frac{\omega(\basicx - \conj{\basicx})}{2\latticevelocity}\momentletter_2^{\atequilibrium}|^{\timevariable} - \frac{\omega(1-\omega)(\basicx - \conj{\basicx})}{2\latticevelocity} \momentletter_2^{\atequilibrium}|^{\timevariable - 1} \\
    &+ \frac{\omega(\basicx - 2 + \conj{\basicx})}{2\latticevelocity^2} \momentletter_3^{\atequilibrium}|^{\timevariable} + \frac{\omega(1-\omega) (\basicx - 2 + \conj{\basicx})}{2\latticevelocity^2} \momentletter_3^{\atequilibrium}|^{\timevariable - 1},
  \end{align*}
  coinciding with the one found by \cite{fuvcik2021equivalent}.

\subsubsection*{\scheme{1}{3} MRT for one conservation law}

Consider the \scheme{1}{3} MRT scheme by \cite{fuvcik2021equivalent}, which is constructed in the same way than the previous one except for $\relaxationmatrix = \diagmatrix(0, \omega_2, \omega_3)$ with $\omega_2, \omega_3 \neq 1$.
The characteristic and minimal polynomial coincide and are given by
\begin{align*}
  \charactpolynomial_{\schememoments}= \polynomialunknown^3 &+ (-1 + (\basicx + \conj{\basicx})(\omega_2/2 + \omega_3/2 - 1))\polynomialunknown^2 \\
  &+ (1 + \omega_2 \omega_3 - \omega_2 - \omega_3 + (1 - \omega_2/2 - \omega_3/2)(\basicx + \conj{\basicx})) \polynomialunknown \\
  &- (1-\omega_2)(1-\omega_3).
\end{align*}
Then the equivalent \fd scheme is 
\begin{align*}
  \momentletter_1^{\timevariable + 1} &= (1-\omega_2/2 - \omega_3/2)(\basicx + \conj{\basicx}) \momentletter_1^{\timevariable} + \momentletter_1^{\timevariable} -  (1-\omega_2/2 - \omega_3/2)(\basicx + \conj{\basicx}) \momentletter_1^{\timevariable - 1} \\
  &- (1-\omega_2 - \omega_3 + \omega_2 \omega_3) \momentletter_1^{\timevariable - 1} + (1-\omega_2)(1-\omega_3) \momentletter_1^{\timevariable - 2} \\
  &+ \frac{\omega_2(\basicx - \conj{\basicx})}{2\latticevelocity}\momentletter_2^{\atequilibrium}|^{\timevariable} - \frac{\omega_2(1-\omega_3)(\basicx - \conj{\basicx})}{2\latticevelocity} \momentletter_2^{\atequilibrium}|^{\timevariable - 1} \\
  &+ \frac{\omega_3(\basicx - 2 + \conj{\basicx})}{2\latticevelocity^2} \momentletter_3^{\atequilibrium}|^{\timevariable} + \frac{\omega_3(1-\omega_2) (\basicx - 2 + \conj{\basicx})}{2\latticevelocity^2} \momentletter_3^{\atequilibrium}|^{\timevariable - 1},
\end{align*}
corresponding to the one found by \cite{fuvcik2021equivalent}.

\subsection*{\scheme{2}{4} for one conservation law}

  Consider $\spatialdimensionality = 2$ and $\velocitynumber = 4$ with $\vectorial{\normalizedvelocityletter}_1 = \transpose{(1, 0)}$, $\vectorial{\normalizedvelocityletter}_2 = \transpose{(0, 1)}$, $\vectorial{\normalizedvelocityletter}_3 = \transpose{(-1, 0)}$ and $\vectorial{\normalizedvelocityletter}_4 = \transpose{(0, -1)}$ and
  \begin{equation}
      \momentsmatrix = 
      \left (
      \begin{matrix}
       1 & 1 & 1 & 1\\
       \latticevelocity & 0 & -\latticevelocity & 0 \\
       0 & \latticevelocity & 0 & -\latticevelocity \\
       \latticevelocity^2 & -\latticevelocity^2 & \latticevelocity^2 & -\latticevelocity^2
      \end{matrix}
      \right ), \qquad
      \relaxationmatrix = \diagmatrix(0, s, s, 1),
      \quad \text{with} \quad s \neq 1.
  \end{equation}
  Therefore $\consmomentsnumber = 1$ and $\nontrivialnumber = 2$.
  This can be used, for example, coupled with other schemes of the same nature (building what we call a ``vectorial scheme'' \cite{dubois2014simulation}) to easily simulate systems of non-linear conservation laws for $\spatialdimensionality = 2$, see \cite{bellotti2021multidimensional}. 
  After some computation, the characteristic polynomial of $\schememoments$ reads
  \begin{align*}
      \charactpolynomial_{\schememoments} = \hspace{-0.5mm} \polynomialunknown^3 &+ (2s-3) \frac{(\basicx + \conj{\basicx} + \basicy + \conj{\basicy})}{4} \polynomialunknown^2 + (1-s)\left ( (2-s)\frac{(\basicx \basicy + \conj{\basicx}\basicy+\basicx\conj{\basicy}+\conj{\basicx}\conj{\basicy})}{4} + 1 \right )\polynomialunknown \\
      &- (1-s)^2  \frac{(\basicx + \conj{\basicx} + \basicy + \conj{\basicy})}{4}.
  \end{align*}
  One can check as usual that it coincides with the minimal polynomial.
  The equivalent \fd scheme taking $\momentletter_4^{\atequilibrium} \equiv 0$ for simplicity is
  \begin{align*}
    \momentletter_1^{\timevariable+1} = &-(2s-3)\averageaxis \momentletter_1^{\timevariable} - (1-s) \momentletter_1^{\timevariable-1} - (1-s)(2-s)\averagediagonal \momentletter_1^{\timevariable-1} +(1-s)^2 \averageaxis \momentletter_1^{\timevariable-2} \\
    &+\frac{s}{2\latticevelocity}(\basicx - \conj{\basicx})\momentletter_2^{\atequilibrium}|^{\timevariable} + \frac{s}{2\latticevelocity}(\basicy - \conj{\basicy})\momentletter_3^{\atequilibrium}|^{\timevariable} \\
    &-\frac{s(1-s)}{\latticevelocity}\frac{1}{2}\left (\basicy \frac{(\basicx - \conj{\basicx})}{2} + \conj{\basicy} \frac{(\basicx - \conj{\basicx})}{2}\right ) \momentletter_2^{\atequilibrium}|^{\timevariable-1} \\
    &-\frac{s(1-s)}{\latticevelocity}\frac{1}{2}\left (\basicx \frac{(\basicy - \conj{\basicy})}{2} + \conj{\basicx} \frac{(\basicy - \conj{\basicy})}{2}\right ) \momentletter_3^{\atequilibrium}|^{\timevariable-1},
  \end{align*}
  where we have introduced the short-hands $\averageaxis \definitionequality (\basicx + \conj{\basicx} + \basicy + \conj{\basicy})/4 \in \setfinitedifferenceoperators$ and $\averagediagonal \definitionequality (\basicx \basicy + \basicx \conj{\basicy} + \conj{\basicx} \basicy + \conj{\basicx} \conj{\basicy})/4 \in \setfinitedifferenceoperators$, yielding respectively the average between neighbors along the axis and along the diagonals.

\end{document}